\documentclass[reqno]{amsart}
\usepackage{amssymb,amsmath,amsthm,amsfonts,fancyhdr,booktabs}
\usepackage{indentfirst}
\usepackage{mathrsfs}
\usepackage{setspace}
\usepackage{color}
\usepackage{tikz}
\usepackage{float}
\usepackage{xcolor} 
\usepackage{cite}
\usepackage{bm}

\usepackage{lineno}
\usepackage{latexsym}
\usepackage{verbatim}
\usepackage{float}
\usepackage{cases}
\usepackage{esint}

\theoremstyle{plain} 
\theoremstyle{remark}
\newtheorem{thm}{Theorem}[section]
\newtheorem{cor}[thm]{Corollary}
\newtheorem{lem}[thm]{Lemma}

\newtheorem{defin}[thm]{Definition}
\newtheorem{rem}[thm]{Remark}

\newcommand{\tr}{\text{Tr}}
\newcommand{\diag}{\text{diag}}

\allowdisplaybreaks[4] 
  \numberwithin{equation}{section}
  \numberwithin{figure}{section}

\begin{document}

\title[Sharp perturbation bounds]{Sharp perturbation bounds on the Frobenius norm of subunitary and positive polar factors }

\author{Teng Zhang}

\address{School of Mathematics and Statistics, Xi'an Jiaotong University, Xi'an, P.R.China 710049.}

\email{teng.zhang@stu.xjtu.edu.cn}

\begin{abstract}
Leveraging tools from convex analysis and incorporating additional singular value information of matrices, we completely resolve the problem of establishing perturbation bounds for the Frobenius norm of subunitary and positive polar factors. We derive corresponding sharp upper and lower bounds. As corollaries, we refine the results of Li and Sun [SIAM J. Matrix Anal. Appl., 23 (2002), pp. 1183--1193] and strengthen the classical Araki-Yamagami inequality [Comm. Math. Phys., 81 (1981), no. 1, pp. 89--96]. The versatility of our method also allows us to strengthen Lee's conjecture, providing a sharper version along with a matching sharp lower bound. Furthermore, we generalize the classical matrix arithmetic-geometric mean inequality and Cauchy-Schwarz inequality into tighter and more robust forms. Finally, we establish a sharp lower bound for a result by Kittaneh [Comm. Math. Phys., 104 (1986), no. 2, pp. 307--310].
\end{abstract} 

\subjclass[2020]{ 15A45, 15A60, 47A30, 47A50, 65F10}

\keywords{Perturbation bound,  Frobenius norm, polar decomposition, subunitary factor, positive factor}

\date{}
\maketitle
\section{Introduction}
Let $\mathbb{C}^{m\times n}$ denote the set of $m\times n$ complex matrices, and let $\mathbb{C}_r^{m\times n}$ be the subset of those matrices of rank $r$. We denote the Frobenius norm on $\mathbb{C}^{m\times n}$ by $\|\cdot\|_F$, the trace of a square matrix $A $ by $\tr\, A$, and the conjugate transpose of $A$ by $A^*$. The absolute value of $A$ is defined as $|A| := (A^*A)^{1/2}$.

For any given matrix $A\in \mathbb{C}_r^{m\times n}$, there exist a subunitary matrix $Q\in \mathbb{C}_r^{m\times n}$ and  a positive semidefinite matrix $H\in \mathbb{C}_r^{n\times n} $ such that
\begin{eqnarray*}\label{pd}
	A=QH.
\end{eqnarray*}
This decomposition is called the generalized polar decomposition of $A$, $Q$ is  referred to as the (sub)unitary  polar factor of $A$, and $H$ is  termed   the Hermitian positive (semi)definite factor of $A$ (or simply the positive polar factor of $A$). In general, decomposition (\ref{pd}) is not unique, however, when $\mathcal{R}(Q^*)=\mathcal{R}(H)$ (where $\mathcal{R}(\cdot)$ denotes the column space), the decomposition becomes unique \cite{SC89}.

The generalized polar decomposition of a matrix plays a key role in numerous fields, including scientific computation, optimization theory, aerospace, and even psychometrics, as evidenced by \cite{GV96, Hig86,HMZ14,KL91}.  
For this reason, the perturbation theory of the generalized polar decomposition  of  matrices has garnered substantial attention from researchers, as documented in \cite{Bar89,Bha97,CL05,CL06,CL08,LiR93,LiR95,LiR97,LiR05,LS02,LS03,LiW05,LS06,LiW08,HMZ14, Mat93,SC89,Zhu18}.  Extensive investigations into the Frobenius norm have been conducted in the literature.  

Let \begin{eqnarray}\label{gpc}
	A=QH \text{ and } \widetilde{A}=\widetilde{Q}\widetilde{H}
\end{eqnarray}
be the generalized polar decompositions of $A\in \mathbb{C}_r^{m\times n}$ and $\widetilde{A}\in\mathbb{C}_s^{m\times n}$, respectively. In fact,  $H=\left|A\right|$ and $\widetilde{H}=|\widetilde{A}|$ here. Let the singular values of $A$ and $\widetilde{A}$, arranged in decreasing order, be  $\sigma_1\ge \ldots\ge \sigma_r>0$ and $\widetilde{\sigma}_1\ge \ldots\ge \widetilde{\sigma}_s>0$, respectively. When  rank$(A)$ $=$ rank$(\widetilde{A})$,
 the best known previous perturbation bound for the Frobenius norm  of the subunitary  polar factors is attributed to Li and Sun \cite{LS02}. They noted that their bound is sharp in certain cases and provided many relevant examples.
\begin{thm}[Li-Sun]\label{Li-Sun}
	Let $A, \widetilde{A}=A+E\in \mathbb{C}_r^{m\times n}$ have the generalized polar decompositions (\ref{gpc}). Then
	\begin{eqnarray}\label{lsbound}
		\|Q-\widetilde{Q}\|_F\le \dfrac{2}{\sigma_r+\widetilde{\sigma}_r}\|E\|_F.
	\end{eqnarray}
\end{thm}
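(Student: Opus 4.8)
The plan is to pass to a common unitary coordinate system via the singular value decompositions, reduce the Frobenius distance $\|Q-\widetilde Q\|_F$ to a sum of scalar quantities, and then exploit the positive semidefiniteness of $H$ and $\widetilde H$ to bound each term. First I would write $A = U\Sigma V^*$ and $\widetilde A = \widetilde U \widetilde\Sigma \widetilde V^*$ as (thin or full) singular value decompositions, so that $Q$ and $\widetilde Q$ are the associated subunitary factors and $H = V\Sigma V^*$, $\widetilde H = \widetilde V\widetilde\Sigma\widetilde V^*$ with $\Sigma = \diag(\sigma_1,\dots,\sigma_r)$, $\widetilde\Sigma = \diag(\widetilde\sigma_1,\dots,\widetilde\sigma_r)$ (both of size $r$ since the ranks agree). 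The key algebraic identity I want to use is that, since $\widetilde Q H = \widetilde Q \widetilde Q^* \widetilde Q H$ (and similarly with roles swapped), one can relate $Q-\widetilde Q$ to $E$ through the two positive factors. The cleanest route: from $A = QH$, $\widetilde A = \widetilde Q\widetilde H$ and $E = \widetilde A - A$, multiply on the right by appropriate factors to isolate $Q - \widetilde Q$ acting on the range.

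The central step is the following. Consider $Q^*\widetilde A - Q^* A = Q^*E$, i.e. $Q^*\widetilde Q\widetilde H - H = Q^*E$, and symmetrically $\widetilde Q^* A - \widetilde Q^*\widetilde A = \widetilde Q^*(-E)$, i.e. $\widetilde Q^* Q H - \widetilde H = -\widetilde Q^* E$. Taking adjoints of the second gives $H Q^*\widetilde Q - \widetilde H = -E^*\widetilde Q$. Now subtract or combine these to get an equation of the form $H X + X \widetilde H = (\text{something linear in } E)$ where $X = Q^*\widetilde Q - (\text{something})$ — more precisely one obtains $H\,(I - Q^*\widetilde Q)\cdot(\text{stuff}) + \cdots$. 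The honest version is: set $W := Q^*\widetilde Q$; then the two displayed relations combine to give $H(I - W^*) + (I - W)\widetilde H$ expressed via $E$, and since $\|Q - \widetilde Q\|_F^2 = 2\operatorname{Re}\tr(I - W)$ (using $Q^*Q = \widetilde Q^*\widetilde Q = I_r$ on the relevant subspace, modulo the range subtlety handled by the subunitarity), we reduce the whole problem to estimating $\operatorname{Re}\tr\bigl(\text{linear-in-}E\bigr)$ against the eigenvalues of $H$ and $\widetilde H$.

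Once in this Sylvester-type form $H M + M \widetilde H = N$ with $\|N\|_F \le \|E\|_F$ (after a Cauchy–Schwarz / unitary-invariance step) and $\|Q-\widetilde Q\|_F \le \|M\|_F$ for the appropriate $M$, I would diagonalize: in the eigenbases of $H$ and $\widetilde H$ the equation decouples entrywise to $(\sigma_i + \widetilde\sigma_j) m_{ij} = n_{ij}$, hence $|m_{ij}| \le |n_{ij}|/(\sigma_i + \widetilde\sigma_j) \le |n_{ij}|/(\sigma_r + \widetilde\sigma_r)$ since $\sigma_i \ge \sigma_r$ and $\widetilde\sigma_j \ge \widetilde\sigma_r$ for all $i,j \le r$. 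Summing squares gives $\|M\|_F \le \|N\|_F/(\sigma_r+\widetilde\sigma_r) \le \|E\|_F/(\sigma_r+\widetilde\sigma_r)$, and tracking the constant $2$ through the identity $\|Q-\widetilde Q\|_F^2 = 2\operatorname{Re}\tr(I-W)$ versus the Sylvester bound yields \eqref{lsbound}.

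The main obstacle I expect is the bookkeeping around the \emph{subunitary} (rather than unitary) factors: $Q$ and $\widetilde Q$ are only partial isometries, so $QQ^*$ and $\widetilde Q\widetilde Q^*$ are projections onto possibly different $r$-dimensional subspaces, and identities like $Q^*Q = I$ hold only on $\mathcal R(Q^*) = \mathcal R(H)$. Handling this cleanly — either by restricting all computations to the relevant column/row spaces, or by a careful use of $H = Q^*A$, $\widetilde H = \widetilde Q^*\widetilde A$ together with the range conditions $\mathcal R(Q^*) = \mathcal R(H)$ and $\mathcal R(\widetilde Q^*) = \mathcal R(\widetilde H)$ — is where the argument needs the most care; the scalar estimate at the end is routine by comparison. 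A secondary point is verifying that the combination of the two linear-in-$E$ relations genuinely produces a Sylvester equation with the symmetric denominator $\sigma_i + \widetilde\sigma_j$ rather than something weaker; this is exactly the mechanism that produces the factor $\sigma_r + \widetilde\sigma_r$ in the denominator and is the crux of why the Li–Sun bound improves on naive estimates using only $\sigma_r$ or only $\widetilde\sigma_r$.
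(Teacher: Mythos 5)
Your Sylvester-equation route is sound in the full-rank case: adding $Q^*\widetilde Q\widetilde H-H=Q^*E$ to the adjoint of $\widetilde Q^*QH-\widetilde H=-\widetilde Q^*E$ does give $H(W-I)+(W-I)\widetilde H=Q^*E-E^*\widetilde Q$ with $W=Q^*\widetilde Q$, and when $r=m=n$ one has $\|Q-\widetilde Q\|_F=\|I-W\|_F$, so entrywise division by $\sigma_i+\widetilde\sigma_j\ge\sigma_r+\widetilde\sigma_r$ together with $\|Q^*E-E^*\widetilde Q\|_F\le 2\|E\|_F$ yields (\ref{lsbound}). (Two small slips: the right-hand side satisfies $\|N\|_F\le 2\|E\|_F$, not $\le\|E\|_F$, and $\|Q-\widetilde Q\|_F^2=2r-2\Re\,\tr W$, not $2\Re\,\tr(I-W)$; these do not affect the unitary case.)

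The genuine gap is the subunitary case $r<\min(m,n)$, which is exactly the case the theorem covers and which you defer as ``bookkeeping.'' There the step $\|Q-\widetilde Q\|_F\le\|M\|_F$ fails, and not for notational reasons. Put $\Phi=U_1^*\widetilde U_1$, $\Psi=V_1^*\widetilde V_1$. In the eigenbases of $H$ and $\widetilde H$ your equation reads $(\sigma_i+\widetilde\sigma_j)m_{ij}=n_{ij}$ with $\sigma_i=0$ for $i>r$ and $\widetilde\sigma_j=0$ for $j>r$; the only block whose denominator is bounded below by $\sigma_r+\widetilde\sigma_r$ is the $(1,1)$ block, which equals $\Phi-\Psi$ (mixed blocks carry only $\sigma_i$ or $\widetilde\sigma_j$, and the $(2,2)$ block is not controlled at all). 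But
\begin{equation*}
\|Q-\widetilde Q\|_F^2 \;=\; \|\Phi-\Psi\|_F^2+\|\widetilde U_2^*U_1\|_F^2+\|\widetilde V_1^*V_2\|_F^2 \;\ge\; \|\Phi-\Psi\|_F^2,
\end{equation*}
with strict inequality whenever the column or row spaces rotate, so the Sylvester estimate bounds $\|Q-\widetilde Q\|_F$ from below, not above. The range-rotation terms need separate identities ($\widetilde U_2^*U_1\Sigma_1=-\widetilde U_2^*EV_1$, $\widetilde\Sigma_1\widetilde V_1^*V_2=\widetilde U_1^*EV_2$), whose denominators are $\sigma_r$ and $\widetilde\sigma_r$ individually; recombining all three pieces to recover the constant $2/(\sigma_r+\widetilde\sigma_r)$ is the actual content of Li--Sun's proof and is not routine. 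Note also that the paper itself never argues this way: it obtains Li--Sun as a consequence of Corollary \ref{cor1}, itself a special case of Theorem \ref{thm1.3}, whose proof writes $\|Q-\widetilde Q\|_F^2/\|E\|_F^2$ in terms of $x_{ij}=\Re(s_{ij}\overline{t_{ij}})$ with $S=\widetilde U^*U$, $T=\widetilde V^*V$ and maximizes a quasi-convex ratio over the extreme points of an $\ell_1$-constrained set (Lemma \ref{keylem}), the classical bound then following from $\sum_{j=1}^{k}(\widetilde\sigma_{r-k+j}+\sigma_{r+1-j})^2\ge k(\sigma_r+\widetilde\sigma_r)^2$.
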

However, when rank$(A)$ $\neq$ rank$(\widetilde{A})$,  the situation becomes considerably more complex, and to the author's knowledge, there are currently no significant results worth mentioning. 

Regarding positive polar factors, the well-known Araki-Yamagami inequality \cite{AY81} states that  
\begin{thm}[Araki-Yamagami] \label{thm1.2}
	Let $A, \widetilde{A}=A+E\in \mathbb{C}^{m\times n}$ have the generalized polar decompositions (\ref{gpc}). Then
	\begin{eqnarray}\label{akbound}
		\|H-\widetilde{H}\|_F\le \sqrt{2}\|E\|_F,
	\end{eqnarray}
	where $\sqrt{2}$ is the optimal constant.
\end{thm}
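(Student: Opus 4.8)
The plan is to reduce to the case of Hermitian matrices — where I expect the inequality to hold with the \emph{sharp} constant $1$ — and then recover the factor $\sqrt 2$ from a size‑doubling dilation. Since $H=|A|=(A^*A)^{1/2}$ and $\widetilde H=|\widetilde A|$, inequality (\ref{akbound}) is equivalent to $\||A|-|\widetilde A|\|_F\le\sqrt 2\,\|A-\widetilde A\|_F$. For the Hermitian step, suppose $G,K\in\mathbb C^{N\times N}$ are Hermitian; I would first show $\||G|-|K|\|_F\le\|G-K\|_F$. Because $G,K,|G|,|K|$ are all Hermitian and $|G|^2=G^2$, $|K|^2=K^2$, expanding each side via $\|M\|_F^2=\tr(M^2)$ reduces this to the single trace inequality $\tr(GK)\le\tr(|G||K|)$. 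To prove the latter, take the Jordan decompositions $G=G_+-G_-$, $K=K_+-K_-$ with $G_\pm,K_\pm\ge 0$ and $|G|=G_++G_-$, $|K|=K_++K_-$; using $\tr(XY)=\tr(Y^{1/2}XY^{1/2})\ge 0$ for $X,Y\ge 0$, I would expand
\[
\tr(GK)=\tr(G_+K_+)+\tr(G_-K_-)-\tr(G_+K_-)-\tr(G_-K_+)
\]
and discard the two nonnegative cross terms, giving $\tr(GK)\le\tr(G_+K_+)+\tr(G_-K_-)\le\tr(|G||K|)$.

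Next comes the dilation. To $A\in\mathbb C^{m\times n}$ I associate the Hermitian matrix $\mathcal A=\left(\begin{smallmatrix}0&A\\A^*&0\end{smallmatrix}\right)\in\mathbb C^{(m+n)\times(m+n)}$, and define $\widetilde{\mathcal A}$ from $\widetilde A$ in the same way. Then $\mathcal A^2=\diag(AA^*,A^*A)$, so $|\mathcal A|=\diag(|A^*|,|A|)$ and likewise $|\widetilde{\mathcal A}|=\diag(|\widetilde A^*|,|\widetilde A|)$, while $\mathcal A-\widetilde{\mathcal A}=\left(\begin{smallmatrix}0&A-\widetilde A\\(A-\widetilde A)^*&0\end{smallmatrix}\right)$. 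Applying the Hermitian inequality of the previous paragraph to $\mathcal A,\widetilde{\mathcal A}$ and reading off the diagonal blocks,
\begin{align*}
\|H-\widetilde H\|_F^2 &\le \||A|-|\widetilde A|\|_F^2+\||A^*|-|\widetilde A^*|\|_F^2=\||\mathcal A|-|\widetilde{\mathcal A}|\|_F^2\\
&\le\|\mathcal A-\widetilde{\mathcal A}\|_F^2=\|E\|_F^2+\|E^*\|_F^2=2\|E\|_F^2,
\end{align*}
which is precisely (\ref{akbound}).

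For the optimality of $\sqrt 2$ I would produce a limiting family of $2\times2$ matrices: with $R_\theta$ a planar rotation and $\varepsilon>0$ small, put $A=\diag(1,\varepsilon)$ and $\widetilde A=AR_\theta$, so that $E=A(R_\theta-I)$ and $\widetilde H=R_\theta^{\top}\diag(1,\varepsilon)R_\theta$; a short computation gives $\|H-\widetilde H\|_F^2/\|E\|_F^2\to 1+\cos\theta$ as $\varepsilon\to 0$, whose supremum over $\theta$ equals $2$, so no constant smaller than $\sqrt 2$ is admissible (the value being approached but not attained by fixed matrices). The one genuinely nonobvious ingredient is the Hermitian dilation in the middle step: the direct attacks — trying to solve $A^*A-\widetilde A^*\widetilde A=|A|(|A|-|\widetilde A|)+(|A|-|\widetilde A|)|\widetilde A|$ for $|A|-|\widetilde A|$, or splitting $A-\widetilde A$ along the two polar factors — all seem to leak constants through cross terms, whereas passing to $\mathcal A$ simultaneously symmetrizes the problem and upgrades the constant to $1$, at the mere cost of the harmless extra summand $\||A^*|-|\widetilde A^*|\|_F^2$ and the factor $2$ in $\|\mathcal A-\widetilde{\mathcal A}\|_F^2$; everything else is bookkeeping.
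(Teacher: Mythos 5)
Your proof is correct, but it takes a genuinely different route from the paper. You prove the Hermitian case $\||G|-|K|\|_F\le\|G-K\|_F$ directly (the reduction to $\tr(GK)\le\tr(|G||K|)$ and the Jordan-decomposition/positivity-of-trace argument are both valid), then pass to the Hermitian dilation $\mathcal A=\bigl(\begin{smallmatrix}0&A\\A^*&0\end{smallmatrix}\bigr)$ to pick up the factor $\sqrt2$; this is essentially a self-contained reproof of Kittaneh's inequality (Theorem \ref{thmkit}) specialized to give Theorem \ref{thm1.2}, a connection the paper itself notes after Corollary \ref{kitcor} but does not use for its ``new proof.'' The paper instead works with the angle $\cos\theta=\Re\,\tr(B^*A)/(\|A\|_F\|B\|_F)$, invokes the Lin--Zhang lemma $\cos^2\alpha\le\cos\beta$ from \cite{LZ22}, and optimizes a one-variable ratio $t-\sqrt{(t-2)t}\le 2$; that route is not self-contained (it leans on an external lemma) but it is the same computational template the paper reuses for the sharpened bounds (Theorems \ref{thm1.4}, \ref{thm1.8}, \ref{slee}), so it naturally exposes where the constant $\sqrt2$ can be improved using singular-value information, which your dilation argument does not. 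Your limiting family $A=\operatorname{diag}(1,\varepsilon)$, $\widetilde A=AR_\theta$ with ratio tending to $1+\cos\theta$ is a correct verification of optimality (the paper asserts optimality without exhibiting an example in its proof), so on that point your write-up is actually more complete.
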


Drawing on Lin-Zhang's proof of Lee's conjecture \cite{LZ22}, we present a new proof of Theorem \ref{thm1.2} in Section \ref{s3}. 

It should be noted that perturbation bounds for (sub)unitary and positive polar factors depend heavily on the number field, rank, and dimension of  matrices. As shown in Li-Sun's bound (\ref{lsbound}) and Araki-Yamagami's bound (\ref{akbound}), the introduced singular value information is far from sufficient, leaving considerable room for improvement. 

Our sharp upper perturbation bounds for the Frobenius norm of subunitary and positive polar factors are given as follows.
\begin{thm}\label{thm1.3}
	Let $r\le s$ and $A\in \mathbb{C}_r^{m\times n}, \widetilde{A}=A+E\in \mathbb{C}_s^{m\times n}$ have the generalized polar decompositions (\ref{gpc}). Then
	\begin{eqnarray*}
		\|Q-\widetilde{Q}\|_F\le \sqrt{\max\limits_{0\le k\le r}\frac{s-r + 4k}
			{\sum_{j=1}^{r-k}(\sigma_j-\widetilde{\sigma}_j)^2+\sum_{j=1}^{k}(\sigma_{r+1-j}-\widetilde{\sigma}_{s-k+j})^2+\sum_{j=r-k+1}^{s-k}\widetilde{\sigma}_j^2}}\|E\|_F,
	\end{eqnarray*}
	where the coefficient here is optimal.
\end{thm}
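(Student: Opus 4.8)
The plan is to reduce the problem to an optimization over singular values by exploiting the variational/convex-analytic structure of the Frobenius distance between subunitary factors. First I would recall that $Q = A|A|^{-1}$ on $\mathcal{R}(A^*)$ and $\widetilde Q = \widetilde A|\widetilde A|^{-1}$ on $\mathcal{R}(\widetilde A^*)$, so that $\|Q-\widetilde Q\|_F^2 = r + s - 2\,\mathrm{Re}\,\tr(Q^*\widetilde Q)$. Writing the singular value decompositions $A = U\Sigma V^*$ and $\widetilde A = \widetilde U\widetilde\Sigma\widetilde V^*$, the factor $Q$ has SVD with the same singular subspaces as $A$ but unit singular values, and similarly for $\widetilde Q$. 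The key quantity $\|E\|_F^2 = \|A-\widetilde A\|_F^2$ can then be expanded, and the ratio $\|Q-\widetilde Q\|_F^2/\|E\|_F^2$ becomes a function of the singular values $\sigma_j,\widetilde\sigma_j$ together with the principal angles between the two sets of singular subspaces. The heart of the matter is to show that, after optimizing over all admissible configurations of subspaces, the extremal configuration aligns the subspaces block-diagonally in a way governed by the integer parameter $k$: intuitively, $r-k$ of the singular directions of $A$ are matched with the top $r-k$ of $\widetilde A$ (contributing $(\sigma_j-\widetilde\sigma_j)^2$ to the denominator), $k$ directions of $A$ are matched "in reverse" with small singular directions of $\widetilde A$ (contributing $(\sigma_{r+1-j}-\widetilde\sigma_{s-k+j})^2$), and the remaining $s-r$ directions of $\widetilde A$ are unmatched (contributing $\widetilde\sigma_j^2$). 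The numerator $s-r+4k$ counts $r+s$ minus twice the number of matched-and-aligned pairs, with the reversed pairs contributing the extra $4k$.

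Concretely, I would proceed as follows. Step 1: For fixed singular subspaces, minimize $\|E\|_F$ over the phases and the remaining unitary freedom; this is a Procrustes-type subproblem that is solved explicitly and shows that the worst-case $E$ can be taken with a specific block structure. Step 2: Parametrize the relative position of $\mathcal{R}(A)$-type subspaces and $\mathcal{R}(\widetilde A)$-type subspaces by a partial permutation (which singular direction of $A$ pairs with which of $\widetilde A$) plus genuine angle parameters; show via a convexity/exchange argument that at the optimum all nonzero principal angles degenerate to $0$ or $\pi/2$, so only the combinatorial pairing data survives. Step 3: Among all pairings, observe that pairing $\sigma_j$ with $\widetilde\sigma_\ell$ "straight" costs $(\sigma_j-\widetilde\sigma_\ell)^2$ in the denominator while contributing nothing extra to the numerator, whereas pairing them "reversed" (i.e. identifying directions so that the inner product $\mathrm{Re}\,\tr(Q^*\widetilde Q)$ picks up a $-1$ instead of $+1$) costs $(\sigma_j-\widetilde\sigma_\ell)^2$ but adds $4$ to the numerator; and leaving a $\widetilde\sigma_\ell$-direction unpaired costs $\widetilde\sigma_\ell^2$ while adding $1$ to the numerator. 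A rearrangement argument then shows the optimal straight pairings use the largest $\sigma_j$ against the largest $\widetilde\sigma_j$, the reversed pairings use the smallest available $\sigma$'s against the smallest available $\widetilde\sigma$'s, which produces exactly the three sums in the denominator for a given number $k$ of reversed pairs; maximizing the resulting ratio over $k \in \{0,1,\dots,r\}$ gives the stated bound.

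For the upper bound itself (as opposed to sharpness), I would instead argue directly: given any $A,\widetilde A$, let $k^\ast$ be chosen so that the displayed maximand is attained, and establish the single inequality
\begin{eqnarray*}
	\Bigl(s-r+4k^\ast\Bigr)\|E\|_F^2 \;\ge\; \|Q-\widetilde Q\|_F^2\;\Bigl(\textstyle\sum_{j=1}^{r-k^\ast}(\sigma_j-\widetilde\sigma_j)^2 + \sum_{j=1}^{k^\ast}(\sigma_{r+1-j}-\widetilde\sigma_{s-k^\ast+j})^2 + \sum_{j=r-k^\ast+1}^{s-k^\ast}\widetilde\sigma_j^2\Bigr),
\end{eqnarray*}
by bounding $\|E\|_F^2$ from below using the min-max/Wielandt--Hoffman-type inequalities for singular values of $A-\widetilde A$ together with the lower bound $\tr(Q^*\widetilde Q)\le$ (something in terms of the pairing), and simultaneously bounding $\|Q-\widetilde Q\|_F^2$ from above. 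The main obstacle I anticipate is Step 2 — rigorously showing that the extremum over subspace configurations is attained at a degenerate (block-diagonal, angles $\in\{0,\pi/2\}$) configuration, since the objective is a ratio of two quadratic-in-angle expressions and is neither convex nor concave; I expect this is where the "tools from convex analysis" advertised in the abstract do the real work, presumably by passing to a suitable joint-numerical-range or doubly-stochastic relaxation where the extreme points are exactly the permutation-type configurations, and then checking the ratio is quasiconvex so that it is maximized at an extreme point. The reduction in Steps 1 and 3 I expect to be routine once Step 2 is in place.
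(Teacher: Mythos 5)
Your starting point (expanding $\|Q-\widetilde Q\|_F^2=r+s-2\Re\,\tr(Q^*\widetilde Q)$ and working with the SVDs) matches the paper, and your closing guess---pass to a doubly-substochastic-type relaxation and use quasiconvexity so the extremum is attained at permutation-like extreme points---is exactly the mechanism the paper uses. But as a proof there is a genuine gap: the step you yourself flag as the main obstacle (your Step 2) is precisely the content of the paper's key lemma, and your proposal contains no argument for it, only a conjecture about how it might go. The missing idea is concrete: set $S=\widetilde U^*U=(s_{ij})$, $T=\widetilde V^*V=(t_{ij})$ and $x_{ij}=\Re(s_{ij}\overline{t_{ij}})$. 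Then $\|Q-\widetilde Q\|_F^2=r+s-2\sum_{i,j}x_{ij}$ and $\|E\|_F^2=F_{r,s}-2\sum_{i,j}\widetilde\sigma_i\sigma_j x_{ij}$ (with $F_{r,s}=\sum_{j=1}^r\sigma_j^2+\sum_{j=1}^s\widetilde\sigma_j^2$) are both affine in $X=(x_{ij})$, and unitarity of $S,T$ together with Cauchy--Schwarz gives $\sum_i|x_{ij}|\le 1$ and $\sum_j|x_{ij}|\le 1$. Hence the ratio is a linear-fractional, therefore quasiconvex and quasiconcave, function on a compact convex set whose extreme points are (by a Birkhoff-type theorem for doubly substochastic matrices) exactly the signed subpermutation matrices. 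No claim that principal angles degenerate to $0$ or $\pi/2$ is needed, and no Procrustes or Wielandt--Hoffman input appears; your proposed ``direct'' route via Wielandt--Hoffman-type inequalities is unsubstantiated and would not by itself produce the sharp, singular-value-dependent constant.

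Even granting the reduction to signed subpermutation matrices, the remaining combinatorics is not routine: the paper applies the rearrangement inequality twice (to fix the optimal row and column index sets) and then a discrete monotone-move argument in $(k_1,k_2)$ (the numbers of $+1$ and $-1$ entries) to show the maximum occurs on the line $k_1+k_2=r$; your Step 3 asserts the conclusion but gives no reason why leaving a direction of $A$ unmatched is never optimal. There is also a bookkeeping error in your cost accounting: a reversed pair contributes $(\sigma_{r+1-j}+\widetilde\sigma_{s-k+j})^2$ to the denominator, since $x_{ij}=-1$ flips the sign of the cross term in $\|E\|_F^2$, not $(\sigma_{r+1-j}-\widetilde\sigma_{s-k+j})^2$; the plus sign is what the paper's lemma, its proof of this theorem, and the $r=s$ corollary actually yield (the minus sign in the theorem's display appears to be a misprint you inherited). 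Finally, optimality of the coefficient requires exhibiting unitary $S,T$ whose relevant blocks realize the extremal $X$ (the paper's explicit $S^\star,T^\star$ built from identity and reversal blocks); your sketch gestures at the extremal configuration but does not construct such matrices.
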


\begin{thm}\label{thm1.4}
	Let $r\le s$ and $A\in \mathbb{C}_r^{m\times n}, \widetilde{A}=A+E\in \mathbb{C}_s^{m\times n}$ have the generalized polar decompositions (\ref{gpc}). Then
	\begin{eqnarray*}
		\|H-\widetilde{H}\|_F\le \sqrt{\frac{F_{r,s}-\sqrt{F_{r,s}^2-2G_{r,r}F_{r,s}}}{G_{r,r}}}\|E\|_F,
	\end{eqnarray*}
	where $F_{r,s}:=\sum_{j=1}^r\sigma_j^2+\sum_{j=1}^s\widetilde\sigma_j^2,
	G_{r,r}:=\sum_{j=1}^r\sigma_j\widetilde\sigma_j$ and the coefficient  is optimal.
\end{thm}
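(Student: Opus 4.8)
The strategy is to convert the inequality into a one‑variable estimate that couples the two mixed traces $\tr(H\widetilde H)$ and $\mathrm{Re}\,\tr(A^{*}\widetilde A)$. Write $F:=F_{r,s}$, $G:=G_{r,r}$ (so $G>0$ unless $A=0$, a trivial case) and $c^{2}:=\bigl(F-\sqrt{F^{2}-2GF}\,\bigr)/G$. Since $F-2G=\sum_{j=1}^{r}(\sigma_{j}-\widetilde\sigma_{j})^{2}+\sum_{j=r+1}^{s}\widetilde\sigma_{j}^{2}\ge 0$, the radicand is nonnegative, $c$ is real, and one checks directly that $1\le c^{2}\le 2$ and that $c^{2}$ is the smaller root of $G\xi^{2}-2F\xi+2F=0$, hence $Gc^{4}=2F(c^{2}-1)$. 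Because $H=|A|$ and $\widetilde H=|\widetilde A|$ are Hermitian with eigenvalues the singular values of $A$ and $\widetilde A$, we have $\|H\|_{F}^{2}+\|\widetilde H\|_{F}^{2}=\|A\|_{F}^{2}+\|\widetilde A\|_{F}^{2}=F$, so that
\[
\|H-\widetilde H\|_{F}^{2}=F-2\tr(H\widetilde H),\qquad \|E\|_{F}^{2}=F-2\,\mathrm{Re}\,\tr(A^{*}\widetilde A),
\]
and the assertion $\|H-\widetilde H\|_{F}\le c\,\|E\|_{F}$ becomes equivalent to $2c^{2}\,\mathrm{Re}\,\tr(A^{*}\widetilde A)-2\tr(H\widetilde H)\le (c^{2}-1)F$.

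The heart of the argument is a bound on $\mathrm{Re}\,\tr(A^{*}\widetilde A)$ in terms of $\tr(H\widetilde H)$ alone. Putting $W:=Q^{*}\widetilde Q$, which is a contraction since $Q,\widetilde Q$ are subunitary, we have $A^{*}\widetilde A=HW\widetilde H$, which under the trace equals $(\widetilde H^{1/2}H^{1/2})(H^{1/2}W\widetilde H^{1/2})$; the Cauchy--Schwarz inequality for the Frobenius inner product then yields
\[
\bigl|\tr(A^{*}\widetilde A)\bigr|\le\|\widetilde H^{1/2}H^{1/2}\|_{F}\,\|H^{1/2}W\widetilde H^{1/2}\|_{F}=\sqrt{\tr(H\widetilde H)}\;\sqrt{\tr(W^{*}HW\widetilde H)}.
\]
Since $\|W\|_{\mathrm{op}}\le1$, the eigenvalues of $W^{*}HW$ are dominated termwise by those of $H$, i.e. $\lambda_{k}(W^{*}HW)\le\sigma_{k}$ for $k\le r$ and $\lambda_{k}(W^{*}HW)=0$ for $k>r$; combining this with the von Neumann trace inequality for positive semidefinite matrices and with $r\le s$ gives $\tr(W^{*}HW\widetilde H)\le\sum_{k=1}^{r}\sigma_{k}\widetilde\sigma_{k}=G$, and the same inequality with $W=I$ gives $\tr(H\widetilde H)\le G$. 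Writing $x:=\tr(H\widetilde H)\in[0,G]$ we conclude $\mathrm{Re}\,\tr(A^{*}\widetilde A)\le\sqrt{xG}$, hence $\|E\|_{F}^{2}\ge F-2\sqrt{xG}$.

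What remains is elementary. Multiplying the last estimate by $c^{2}>0$, it suffices to show $c^{2}\bigl(F-2\sqrt{xG}\bigr)\ge F-2x$ for all $x\in[0,G]$, that is, $\varphi(u):=2u^{2}-2c^{2}\sqrt{G}\,u+(c^{2}-1)F\ge 0$ with $u=\sqrt{x}$. This upward‑opening parabola attains its minimum value $(c^{2}-1)F-\tfrac{1}{2}c^{4}G$, which vanishes by the relation $Gc^{4}=2F(c^{2}-1)$; thus $\varphi\ge0$ and the bound follows (the inequality $\mathrm{Re}\,\tr(A^{*}\widetilde A)\le|\tr(A^{*}\widetilde A)|$ handles the sign issue automatically, and $c^{2}\le2$ guarantees the minimizer $u$ lies in the relevant range).

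For optimality of the coefficient I would exhibit an extremal direct sum in suitable $\mathbb{C}^{m\times n}$: for $j=1,\dots,r$ take the $2\times2$ blocks $A_{j}=\diag(\sigma_{j},0)$ and $\widetilde A_{j}=\widetilde\sigma_{j}\begin{pmatrix}\cos\theta & \sin\theta\\ 0 & 0\end{pmatrix}$ with $\cos\theta:=c^{2}/2\in[0,1]$, and append $\diag(\widetilde\sigma_{r+1},\dots,\widetilde\sigma_{s})$ to $\widetilde A$ on $s-r$ further coordinates where $A$ vanishes. Then $H=\bigoplus_{j}\diag(\sigma_{j},0)$ and $\widetilde H=\bigoplus_{j}\widetilde\sigma_{j}P_{\theta}\,\oplus\,\diag(\widetilde\sigma_{r+1},\dots,\widetilde\sigma_{s})$ with $P_{\theta}$ the rank‑one projection onto $(\cos\theta,\sin\theta)$, and a short computation gives $\|H-\widetilde H\|_{F}^{2}=F-2G\cos^{2}\theta$ and $\|E\|_{F}^{2}=F-2G\cos\theta$; the ratio $(F-2G\cos^{2}\theta)/(F-2G\cos\theta)$ is maximized over $\cos\theta\in[0,1]$ precisely at $\cos\theta=c^{2}/2$, with maximal value $c^{2}$. (In the single degenerate case $r=s$, $\sigma_{j}=\widetilde\sigma_{j}$ for all $j$, one has $c^{2}=2$, the bound reduces to Araki--Yamagami's, and the value is approached rather than attained.) I expect the main obstacle to be identifying the correct intermediate inequality: bounding $|\tr(A^{*}\widetilde A)|$ crudely by $G$ is far too lossy, and one must retain $x=\tr(H\widetilde H)$ as a free parameter linking numerator and denominator; once this is in place the optimization is routine, and the extremal configuration is forced by simultaneous equality in Cauchy--Schwarz and in von Neumann's trace inequality.
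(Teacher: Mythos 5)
Your proof is correct, and it takes a genuinely different route from the paper. The paper parametrizes everything through the unitary factors $S=\widetilde U^*U$, $T=\widetilde V^*V$ of the two SVDs, writes $\|H-\widetilde H\|_F^2=F_{r,s}-2N$ and $\|E\|_F^2=F_{r,s}-2M$ with $N=\sum_{i,j}\widetilde\sigma_i\sigma_j|t_{ij}|^2$, $M=\sum_{i,j}\widetilde\sigma_i\sigma_j\Re(s_{ij}\overline{t_{ij}})$, and obtains the two key facts $N\in[0,G_{r,r}]$ and $|M|\le G_{r,r}^{1/2}N^{1/2}$ via its convex-analysis apparatus (quasi-convexity, extreme points of the substochastic-type set, rearrangement) plus Cauchy--Schwarz, before running the same one-variable optimization you do. You reach the identical pair of estimates intrinsically: your $x=\tr(H\widetilde H)$ is the paper's $N$, your $\Re\,\tr(A^*\widetilde A)$ is the paper's $M$, and you get $x\le G_{r,r}$ and $\Re\,\tr(A^*\widetilde A)\le\sqrt{xG_{r,r}}$ from the factorization $A^*\widetilde A=HW\widetilde H$ with the contraction $W=Q^*\widetilde Q$, Frobenius Cauchy--Schwarz, the bound $\lambda_k(W^*HW)\le\sigma_k$, and von Neumann's trace inequality; the final quadratic argument (using that $c^2$ is the smaller root of $G\xi^2-2F\xi+2F=0$) is the paper's substitution $u=F-2G^{1/2}N^{1/2}$ in disguise. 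Your route buys a more self-contained, standard matrix-analytic proof that bypasses Section 2 entirely (the heavy machinery there is really needed for Theorems \ref{thm1.3} and \ref{thm1.7}, not for this statement), while the paper's route buys uniformity across all its results. For sharpness you give an explicit $2\times2$-block example rather than specifying admissible $S,T$; your computation checks out, your handling of the degenerate case $F_{r,s}=2G_{r,r}$ (value approached, not attained) is actually more careful than the paper's, and the implicit need for enough ambient dimension in your direct sum is no worse than the paper's own extremal $T$, which also requires $n>s$ to be completed to a unitary.
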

\begin{rem}
The optimal $k$ cannot be determined in Theorem \ref{thm1.3}; see Table \ref{tab:max_k_examples} in Section \ref{s2} for illustrative examples.
\end{rem}
For $r=s$ in Theorem \ref{thm1.3}, we provide a refinement of Li-Sun's bound (\ref{lsbound}). It suffices to see that  $\sum_{j=1}^{k}(\widetilde{\sigma}_{r-k+j}+\sigma_{r+1-j})^2\ge k(\sigma_r+\widetilde{\sigma}_r)^2$ for any $1\le k\le r$.
\begin{cor}\label{cor1}
	Let  $A, \widetilde{A}=A+E\in \mathbb{C}_r^{m\times n}$ have the generalized polar decompositions (\ref{gpc}). Then
	\begin{eqnarray}\label{nb1}
		\|Q-\widetilde{Q}\|_F\le \sqrt{\max\limits_{1\le k\le r}\frac{4k}
			{\sum_{j=1}^{r-k}(\sigma_j-\widetilde{\sigma}_j)^2+\sum_{j=1}^{k}(\sigma_{r+1-j}+\widetilde{\sigma}_{r-k+j})^2}}\|E\|_F,
	\end{eqnarray}
	where the coefficient is optimal. 
\end{cor}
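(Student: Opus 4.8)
The plan is to deduce Corollary~\ref{cor1} from Theorem~\ref{thm1.3} in the equal-rank case $s=r$ and then to read off a refinement of Li--Sun's bound (\ref{lsbound}). Substituting $s=r$ into Theorem~\ref{thm1.3}, the numerator $s-r+4k$ reduces to $4k$, and the ``surplus'' sum $\sum_{j=r-k+1}^{s-k}\widetilde\sigma_j^2$ becomes an empty sum --- its upper index $s-k=r-k$ lies strictly below its lower index $r-k+1$ --- hence disappears. The term $k=0$ contributes $0$ and is dominated by the $k=1$ term, whose denominator $\sum_{j=1}^{r-1}(\sigma_j-\widetilde\sigma_j)^2+(\sigma_r+\widetilde\sigma_r)^2$ is finite and strictly positive; so the maximum over $0\le k\le r$ may be replaced by the maximum over $1\le k\le r$. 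What is left in the denominator is $\sum_{j=1}^{r-k}(\sigma_j-\widetilde\sigma_j)^2+\sum_{j=1}^{k}(\sigma_{r+1-j}+\widetilde\sigma_{r-k+j})^2$, where the plus sign (in place of the minus of Theorem~\ref{thm1.3}) arises because the orientation freedom that is pinned down by the surplus singular values $\widetilde\sigma_{r-k+1},\dots,\widetilde\sigma_{s-k}$ when $r<s$ becomes unconstrained when $r=s$ and, in the extremal configuration, is realized by the anti-parallel alignment contributing $(\sigma_{r+1-j}+\widetilde\sigma_{r-k+j})^2$ --- reminiscent of the orientation freedom responsible for the optimal constant $\sqrt2$ in (\ref{akbound}). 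This produces the bound (\ref{nb1}).

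To see that (\ref{nb1}) refines (\ref{lsbound}), fix $1\le k\le r$. For $1\le j\le k$ we have $r+1-j\le r$ and $r-k+j\le r$, so monotonicity of the singular values gives $\sigma_{r+1-j}\ge\sigma_r$ and $\widetilde\sigma_{r-k+j}\ge\widetilde\sigma_r$, whence $(\sigma_{r+1-j}+\widetilde\sigma_{r-k+j})^2\ge(\sigma_r+\widetilde\sigma_r)^2$. Summing and dropping the nonnegative term $\sum_{j=1}^{r-k}(\sigma_j-\widetilde\sigma_j)^2$ shows the denominator of (\ref{nb1}) is at least $k(\sigma_r+\widetilde\sigma_r)^2$, so $4k/(\text{denominator})\le 4/(\sigma_r+\widetilde\sigma_r)^2$ for each $k$; taking the maximum over $1\le k\le r$ and then square roots returns exactly (\ref{lsbound}), and the inequality is strict whenever some $\sigma_{r+1-j}>\sigma_r$ or $\widetilde\sigma_{r-k+j}>\widetilde\sigma_r$ at a maximizing index $k$.

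For optimality of the coefficient in (\ref{nb1}), I would specialize the extremal constructions underlying the optimality claim in Theorem~\ref{thm1.3} to $s=r$: for each $1\le k\le r$ one builds a pair $A,\widetilde A\in\mathbb{C}_r^{m\times n}$ with the prescribed singular values as a direct sum of $r-k$ ``parallel'' $1\times1$ blocks (accounting for the differences $(\sigma_j-\widetilde\sigma_j)^2$) and $k$ blocks whose polar factors are aligned so as to force $(\sigma_{r+1-j}+\widetilde\sigma_{r-k+j})^2$ into the denominator, making (\ref{nb1}) an equality for that $k$; maximizing over $k$ then certifies sharpness.

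The main obstacle I anticipate is justifying the replacement of the minus sign in Theorem~\ref{thm1.3} by the plus sign in (\ref{nb1}) when $r=s$: one must verify carefully that removing the surplus singular values genuinely frees the relevant orientation and that the worst case uses this freedom to turn each $(\sigma_{r+1-j}-\widetilde\sigma_{r-k+j})^2$ into $(\sigma_{r+1-j}+\widetilde\sigma_{r-k+j})^2$. Once that is in hand, the empty-sum bookkeeping, the truncation of the range of $k$, and the monotonicity estimate above are all routine.
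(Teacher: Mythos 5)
Your overall route---specializing Theorem \ref{thm1.3} to $s=r$, discarding the empty sum $\sum_{j=r-k+1}^{s-k}\widetilde\sigma_j^2$ and the $k=0$ term, and then recovering Li--Sun's bound (\ref{lsbound}) via $(\sigma_{r+1-j}+\widetilde\sigma_{r-k+j})^2\ge(\sigma_r+\widetilde\sigma_r)^2$---is exactly the paper's, and those parts are correct. The genuine gap is the step you yourself flag as the main obstacle: passing from the minus sign $(\sigma_{r+1-j}-\widetilde\sigma_{s-k+j})^2$ in the displayed statement of Theorem \ref{thm1.3} to the plus sign $(\sigma_{r+1-j}+\widetilde\sigma_{r-k+j})^2$ in (\ref{nb1}). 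Your ``orientation freedom'' narrative is a heuristic, not a proof, and it rests on a false premise: no sign flip occurs at $r=s$. The minus sign in the statement of Theorem \ref{thm1.3} is simply a typo; Lemma \ref{keylem} and the Section \ref{s3} proof of Theorem \ref{thm1.3} establish the denominator $\sum_{j=1}^{r-k}(\sigma_j-\widetilde\sigma_j)^2+\sum_{j=1}^{k}(\sigma_{r+1-j}+\widetilde\sigma_{s-k+j})^2+\sum_{j=r-k+1}^{s-k}\widetilde\sigma_j^2$ with a plus sign uniformly for all $r\le s$ (the maximizer has $x_{s-k+j,\,r+1-j}=-1$, contributing $+2\widetilde\sigma_{s-k+j}\sigma_{r+1-j}$ to the denominator). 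Note also that the corollary genuinely cannot be deduced from the minus-sign version you quote: replacing plus by minus shrinks the denominator, enlarges the maximum, and therefore yields a strictly weaker upper bound, from which the stronger plus-sign statement does not follow. As written, your derivation therefore does not go through; you must either correct the sign in Theorem \ref{thm1.3} or rerun the argument of Lemma \ref{keylem} at $s=r$.

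A smaller point: for optimality you defer entirely to ``the extremal constructions underlying Theorem \ref{thm1.3}.'' That is acceptable here, since the paper's proof exhibits explicit unitaries $S^\star$, $T^\star$ achieving equality and these specialize to $s=r$ unchanged, but your sketch in terms of direct sums of $1\times1$ blocks should be tied to that construction rather than left as a gesture.
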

Let $k^*$ be the optimal index of the right-hand side of (\ref{nb1}). It is easy to see that when $\sigma_j=\widetilde{\sigma}_j, 1\le j\le r- k^*; \sigma_{r-k^*+1}=\ldots=\sigma_r$ and $\widetilde{\sigma}_{r-k^*+1}=\ldots=\widetilde{\sigma}_r$, the bound (\ref{nb1}) reduces to Li-Sun's bound (\ref{lsbound}).

Set $t=\frac{F_{r,s}}{G_{r,r}}$. Clearly, by arithmetic-geometric mean (AM-GM) inequality, $t\ge 2$, with strict inequality if $r<s$ or $r=s$ but $\sigma_j\neq \widetilde{\sigma}_j$ for some $j$. The coefficient squared in  Theorem \ref{thm1.4}
\begin{eqnarray*}
\frac{F_{r,s}-\sqrt{F_{r,s}^2-2G_{r,r}F_{r,s}}}{G_{r,r}}=t-\sqrt{t^2-2t}\overset{\triangle}{=} g_1(t),
\end{eqnarray*}
since $g_1'(t)=\frac{\sqrt{t^2-2t}-(t-1)}{\sqrt{t^2-2t}}<0$, $g_1(t)\le g_1(2)=2$. Thus, we have
\begin{rem}
 Theorem \ref{thm1.4} is a refinement of Theorem \ref{thm1.2}.
\end{rem}
\begin{rem}
		Let  $A\in \mathbb{C}_r^{m\times n},\widetilde{A}\in \mathbb{C}_s^{m\times n}$ have the generalized polar decompositions (\ref{gpc}) and $A, \widetilde{A}\neq 0$.  Then for $r\neq s$ or $r=s$ but $\sigma_j\neq \widetilde{\sigma}_j$ for some $j$, we have
		\[
		\|H-\widetilde{H}\|_F< \sqrt{2}\|E\|_F.
		\]
\end{rem}
We also derive sharp perturbation lower bounds for the Frobenius norm of subunitary polar factors and positive polar factors.
\begin{thm}\label{thm1.7}
		Let $r\le s$ and $A\in \mathbb{C}_r^{m\times n}, \widetilde{A}=A+E\in \mathbb{C}_s^{m\times n}$ have the generalized polar decompositions (\ref{gpc}). Then 
	\begin{eqnarray*}
		\|Q-\widetilde{Q}\|_F\ge \sqrt{\min\limits_{0\le k\le r}\frac{s-r + 4k}
			{\sum_{j=1}^{k}(\sigma_j+\widetilde{\sigma}_j)^2+\sum_{j=1}^{r-k}(\sigma_{r+1-j}-\widetilde{\sigma}_{s-r+k+j})^2+\sum_{j=k+1}^{s-r+k}\widetilde{\sigma}_j^2}}\|E\|_F,
	\end{eqnarray*}
	where the coefficient is optimal.
\end{thm}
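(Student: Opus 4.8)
\medskip
\noindent\textbf{Proof strategy for Theorem~\ref{thm1.7}.}
The plan is to pass to singular value decompositions and reduce the assertion to a scalar optimization. Fix thin SVDs $A=U\Sigma V^{*}$ and $\widetilde A=\widetilde U\widetilde\Sigma\widetilde V^{*}$, where $U\in\mathbb C^{m\times r}$, $V\in\mathbb C^{n\times r}$, $\widetilde U\in\mathbb C^{m\times s}$, $\widetilde V\in\mathbb C^{n\times s}$ have orthonormal columns and $\Sigma=\diag(\sigma_1,\dots,\sigma_r)$, $\widetilde\Sigma=\diag(\widetilde\sigma_1,\dots,\widetilde\sigma_s)$. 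Then $Q=UV^{*}$, $\widetilde Q=\widetilde U\widetilde V^{*}$, and with the contractions $X:=U^{*}\widetilde U$, $Y:=V^{*}\widetilde V$ in $\mathbb C^{r\times s}$ a short trace computation yields
\[
\|Q-\widetilde Q\|_F^{2}=r+s-2\,\mathrm{Re}\,\tr(XY^{*}),\qquad \|E\|_F^{2}=\sum_{j=1}^{r}\sigma_j^{2}+\sum_{j=1}^{s}\widetilde\sigma_j^{2}-2\,\mathrm{Re}\,\tr(\Sigma Y\widetilde\Sigma X^{*}).
\]
Since $U,V,\widetilde U,\widetilde V$ are otherwise arbitrary, the theorem is equivalent to the statement that the infimum of $\|Q-\widetilde Q\|_F^{2}/\|E\|_F^{2}$, over all pairs of contractions $(X,Y)$, equals $\min_{0\le k\le r}(s-r+4k)/D_k'$, with $D_k'$ the denominator appearing in the statement; the ``$\ge$'' part is the inequality itself and the ``$\le$'' part is sharpness.

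The structural observation driving the argument is that $\mathrm{Re}\,\tr(XY^{*})=\sum_{i,j}\mathrm{Re}(X_{ij}\overline{Y_{ij}})$ and $\mathrm{Re}\,\tr(\Sigma Y\widetilde\Sigma X^{*})=\sum_{i,j}\sigma_i\widetilde\sigma_j\,\mathrm{Re}(X_{ij}\overline{Y_{ij}})$ depend on $(X,Y)$ only through the real array $(\mathrm{Re}(X_{ij}\overline{Y_{ij}}))_{i,j}$, so the ratio is a linear--fractional functional of that array. For the inequality I would set $\lambda:=\min_{0\le k\le r}(s-r+4k)/D_k'$ and prove $\|Q-\widetilde Q\|_F^{2}-\lambda\|E\|_F^{2}\ge 0$ for all contractions $X,Y$. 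Minimising the left side over $Y$ first (the map $Y\mapsto\mathrm{Re}\,\tr((X-\lambda\Sigma X\widetilde\Sigma)Y^{*})$ is linear, and $\max_{\|Y\|\le1}\mathrm{Re}\,\tr(ZY^{*})=\|Z\|_{*}$, the trace/nuclear norm), the problem collapses to the single-variable estimate
\[
\max_{\|X\|\le1}\big\|X-\lambda\,\Sigma X\widetilde\Sigma\big\|_{*}\ \le\ \tfrac12\Big[(r+s)-\lambda\Big(\textstyle\sum_{j=1}^{r}\sigma_j^{2}+\sum_{j=1}^{s}\widetilde\sigma_j^{2}\Big)\Big],
\]
where $\|\cdot\|$ denotes the spectral norm.

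This is where convex analysis enters. Since $X\mapsto X-\lambda\Sigma X\widetilde\Sigma$ is affine, $X\mapsto\|X-\lambda\Sigma X\widetilde\Sigma\|_{*}$ is convex, so its maximum over the (compact, convex) unit ball is attained at an extreme point; as $r\le s$, the extreme points are exactly the co-isometries $XX^{*}=I_r$. On any matrix one has $X-\lambda\Sigma X\widetilde\Sigma=X\circ M$, the Schur (entrywise) product with the rank-$\le2$ symbol $M_{ij}=1-\lambda\sigma_i\widetilde\sigma_j$, i.e.\ $M=\mathbf 1_r\mathbf 1_s^{*}-\lambda\,(\sigma_i)_i(\widetilde\sigma_j)_j^{*}$. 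It then remains to evaluate $\max\{\|X\circ M\|_{*}:XX^{*}=I_r\}$; using the rank-$\le2$ structure of $M$ together with von Neumann's trace inequality I expect the maximiser to be a monomial $X$ (a partial permutation with unimodular entries), which reduces the quantity to $\max_{\pi}\sum_{i=1}^{r}\big|1-\lambda\sigma_i\widetilde\sigma_{\pi(i)}\big|$ over injections $\pi\colon\{1,\dots,r\}\to\{1,\dots,s\}$. A rearrangement argument singles out the optimal $\pi$ --- anti-align $\sigma_j$ with $\widetilde\sigma_j$ for $j\le k$, leave $\widetilde\sigma_{k+1},\dots,\widetilde\sigma_{s-r+k}$ unmatched, align $\sigma_{r+1-j}$ with $\widetilde\sigma_{s-r+k+j}$ --- and a short identity shows that for $\lambda=(s-r+4k^{*})/D_{k^{*}}'$, with $k^{*}$ the minimising index, the resulting value equals the right-hand side above; this closes the inequality.

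For sharpness, for each $k$ I would build a block-diagonal pair $(A,\widetilde A)$ with the prescribed singular values: $k$ ``anti-aligned'' one-dimensional blocks carrying $(\sigma_j,\widetilde\sigma_j)$ for $j\le k$ (so $\widetilde Q=-Q$ there and $E$ contributes $(\sigma_j+\widetilde\sigma_j)^{2}$), $r-k$ ``aligned'' blocks pairing $\sigma_{r+1-j}$ with $\widetilde\sigma_{s-r+k+j}$ (so $\widetilde Q=Q$ there and $E$ contributes $(\sigma_{r+1-j}-\widetilde\sigma_{s-r+k+j})^{2}$), and $s-r$ extra rank-one blocks of $\widetilde A$ absorbing $\widetilde\sigma_{k+1},\dots,\widetilde\sigma_{s-r+k}$; a direct computation then gives $\|Q-\widetilde Q\|_F^{2}=s-r+4k$ and $\|E\|_F^{2}=D_k'$. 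Since these blocks re-use only the $r$ directions of $A$ together with $s-r$ new ones, the construction fits inside $\mathbb C^{m\times n}$ (automatic, as $s\le\min(m,n)$), so the constant is indeed optimal. I expect the main obstacle to be the evaluation of $\max\{\|X\circ M\|_{*}:XX^{*}=I_r\}$, equivalently the fact that the infimum of $\|Q-\widetilde Q\|_F^{2}/\|E\|_F^{2}$ is attained at a block-diagonal configuration: the rank-$\le2$ structure of $M$, the explicit value of $\lambda$, and the sign pattern of $1-\lambda\sigma_i\widetilde\sigma_{\pi(i)}$ forced by the minimality of $k^{*}$ should make this tractable, but it is the delicate point --- together with the degenerate cases in which some $D_k'$ vanishes, which force $r=s$ and reduce the statement to a triviality.
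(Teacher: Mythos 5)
Your reduction is set up correctly: the trace identities for $\|Q-\widetilde Q\|_F^2$ and $\|E\|_F^2$, the observation that both depend on the data only through the array $\bigl(\Re(X_{ij}\overline{Y_{ij}})\bigr)$, the duality $\max_{\|Y\|\le 1}\Re\,\tr(ZY^*)=\|Z\|_*$, and the passage to extreme points (co-isometries) of the spectral unit ball are all sound; for the lower bound it is indeed enough that realizable pairs are contractions, so the fact that not every contraction pair is realizable for small $m,n$ does no harm. Your sharpness construction with $k$ anti-aligned blocks, $r-k$ aligned blocks and $s-r$ extra rank-one blocks is correct and in fact more concrete than the paper's exhibition of the unitaries $S_\star,T_\star$.

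The gap is the step you yourself flag: the evaluation $\max\{\|X\circ M\|_*:\|X\|\le 1\}=\max_\pi\sum_{i=1}^r|1-\lambda\sigma_i\widetilde\sigma_{\pi(i)}|$ with $M_{ij}=1-\lambda\sigma_i\widetilde\sigma_j$, and the bound of this quantity by $\tfrac12\bigl[(r+s)-\lambda F_{r,s}\bigr]$, are only ``expected''; von Neumann's trace inequality does not deliver the first claim, and the second is not a short identity. Unwinding it: $\|X\circ M\|_*=\max_{\|Y\|\le1}\sum_{i,j}M_{ij}\Re(X_{ij}\overline{Y_{ij}})$, and the admissible arrays are exactly the set $\mathcal{C}_1$ of Lemma \ref{keylem}, so the monomial-maximizer claim is the extreme-point Lemma \ref{lemext} (Birkhoff-type Lemma \ref{birk}) in disguise; moreover, writing $|1-\lambda\sigma_i\widetilde\sigma_{\pi(i)}|=\epsilon_i(1-\lambda\sigma_i\widetilde\sigma_{\pi(i)})$ with $\epsilon_i=\pm1$, your target inequality is precisely the statement that $\lambda\le\dfrac{r+s-2\sum_i\epsilon_i}{F_{r,s}-2\sum_i\epsilon_i\sigma_i\widetilde\sigma_{\pi(i)}}$ for every injection $\pi$ and every sign pattern, i.e.\ that the linear-fractional ratio is minimized, among signed sub-permutation configurations, at the aligned/anti-aligned configurations indexed by $k$. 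That is exactly the minimization half of Lemma \ref{keylem}, which the paper proves with two applications of the rearrangement inequality plus the directional argument forcing $k_1+k_2=r$; it also contains the a priori non-obvious fact $\lambda\le (r+s)/F_{r,s}$, which you need for your right-hand side to be nonnegative. So your ``rearrangement argument'' and ``short identity'' defer the entire combinatorial core of the proof; as written the proposal is a correct reformulation plus a correct sharpness example, but not a complete proof — completing it requires reproving (or invoking) the extreme-point and rearrangement analysis of Lemma \ref{keylem}.
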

\begin{thm}\label{thm1.8}
		Let $r\le s$ and $A\in \mathbb{C}_r^{m\times n}, \widetilde{A}=A+E\in \mathbb{C}_s^{m\times n}$ have the generalized polar decompositions (\ref{gpc}). Then
	\begin{eqnarray*}
		\|H-\widetilde{H}\|_F\ge \sqrt{\dfrac{F_{r,s}-2G_{r,r}}{F_{r,s}+2G_{r,r}}}\|E\|_F.
	\end{eqnarray*}
where $F_{r,s}:=\sum_{j=1}^r\sigma_j^2+\sum_{j=1}^s\widetilde\sigma_j^2,
G_{r,r}:=\sum_{j=1}^r\sigma_j\widetilde\sigma_j$ and the coefficient  is optimal.
\end{thm}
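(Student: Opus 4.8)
The plan is to reduce the statement to a single affine inequality in two trace quantities and then read off the bound from an algebraic identity. First I would pass to traces. Put $p:=\tr(H\widetilde H)$ and $q:=\mathrm{Re}\,\tr(A^*\widetilde A)$. Since $\tr(H^2)=\tr(A^*A)=\|A\|_F^2=\sum_{j=1}^r\sigma_j^2$ and $\tr(\widetilde H^2)=\|\widetilde A\|_F^2=\sum_{j=1}^s\widetilde\sigma_j^2$, expanding the two Frobenius norms gives
\[
\|H-\widetilde H\|_F^2=F_{r,s}-2p,\qquad \|E\|_F^2=\|\widetilde A-A\|_F^2=F_{r,s}-2q.
\]
Since $F_{r,s}-2G_{r,r}\ge 0$ (shown below) and $F_{r,s}+2G_{r,r}>0$ unless $A=\widetilde A=0$ (a trivial case), the claim is equivalent to
\[
(F_{r,s}+2G_{r,r})(F_{r,s}-2p)\ \ge\ (F_{r,s}-2G_{r,r})(F_{r,s}-2q).
\]

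Next I would record three elementary bounds, each invoking $r\le s$. (i) $p\le G_{r,r}$, by von Neumann's trace inequality for the positive semidefinite pair $H,\widetilde H$, whose ordered eigenvalues are $\sigma_1\ge\cdots\ge\sigma_r>0$ padded by zeros and $\widetilde\sigma_1\ge\cdots\ge\widetilde\sigma_s>0$ padded by zeros, so the monotone pairing equals $\sum_{j=1}^r\sigma_j\widetilde\sigma_j$. (ii) $q\ge-G_{r,r}$, from $|\tr(A^*\widetilde A)|\le\sum_j\sigma_j(A)\sigma_j(\widetilde A)=G_{r,r}$, again von Neumann. (iii) $F_{r,s}\ge 2G_{r,r}$, since $F_{r,s}=\sum_{j=1}^r\sigma_j^2+\sum_{j=1}^s\widetilde\sigma_j^2\ge\sum_{j=1}^r(\sigma_j^2+\widetilde\sigma_j^2)\ge 2\sum_{j=1}^r\sigma_j\widetilde\sigma_j$ by AM--GM. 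The heart of the argument is then the identity
\[
(F_{r,s}+2G_{r,r})(F_{r,s}-2p)-(F_{r,s}-2G_{r,r})(F_{r,s}-2q)=2(G_{r,r}-p)(F_{r,s}+2G_{r,r})+2(q+G_{r,r})(F_{r,s}-2G_{r,r}),
\]
verified by expanding both sides. By (i)--(iii) the right-hand side is a sum of two nonnegative products, hence $\ge 0$; dividing by $F_{r,s}+2G_{r,r}$ and taking square roots yields the bound.

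For optimality I would produce a matching example. When $F_{r,s}>2G_{r,r}$, equality in the identity forces $p=G_{r,r}$ and $q=-G_{r,r}$, so take $m=n=s$ with $A=\diag(\sigma_1,\dots,\sigma_r,0,\dots,0)$ and $\widetilde A=\diag(-\widetilde\sigma_1,\dots,-\widetilde\sigma_r,\widetilde\sigma_{r+1},\dots,\widetilde\sigma_s)$; then $H=\diag(\sigma_1,\dots,\sigma_r,0,\dots,0)$, $\widetilde H=\diag(\widetilde\sigma_1,\dots,\widetilde\sigma_s)$, one computes $\tr(H\widetilde H)=G_{r,r}$ and $\tr(A^*\widetilde A)=-G_{r,r}$, and directly $\|H-\widetilde H\|_F^2/\|E\|_F^2=(F_{r,s}-2G_{r,r})/(F_{r,s}+2G_{r,r})$; if $F_{r,s}=2G_{r,r}$ the coefficient is $0$ and sharpness is trivial.

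I expect no genuinely hard analytic step: the only delicate points are bookkeeping — the degenerate cases $A=\widetilde A=0$ and $E=0$, and tracking the zero eigenvalues and singular values that pad $H,\widetilde H$ and $A,\widetilde A$ to a common size when applying the trace inequalities, which is precisely where $r\le s$ enters. The real content is recognizing that Theorem~\ref{thm1.8} is equivalent to an affine inequality in $(p,q)$ over the box cut out by von Neumann's trace inequality and AM--GM, together with the weighting by $F_{r,s}\pm 2G_{r,r}$ that turns the difference into a manifestly nonnegative combination; once this is in hand the proof is mechanical.
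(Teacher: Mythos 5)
Your proposal is correct. The reduction is the same as the paper's: with $p=\tr(H\widetilde H)$ and $q=\Re\,\tr(A^*\widetilde A)$ you recover exactly the paper's quantities $N$ and $M$ (there expressed through the unitary factors $T=\widetilde V^*V$, $S=\widetilde U^*U$ as $N=\sum_{i,j}\widetilde\sigma_i\sigma_j|t_{ij}|^2$ and $M=\sum_{i,j}\widetilde\sigma_i\sigma_j\Re(s_{ij}\overline{t_{ij}})$), and both proofs rest on $\|H-\widetilde H\|_F^2=F_{r,s}-2p$, $\|E\|_F^2=F_{r,s}-2q$. Where you differ is in how the two traces are bounded and combined: the paper establishes $N\in[0,G_{r,r}]$ via its quasi-convexity/extreme-point machinery (Lemmas \ref{lemext}, \ref{lemmax}) and couples $M$ to $N$ through the Cauchy--Schwarz estimate $|M|\le G_{r,r}^{1/2}N^{1/2}$, then minimizes $\frac{F_{r,s}-2N}{F_{r,s}+2G_{r,r}^{1/2}N^{1/2}}$ over $N$; you instead bound $p\le G_{r,r}$ and $q\ge -G_{r,r}$ independently by von Neumann's trace inequality and finish with an elementary affine identity (in fact simple monotonicity of $\frac{F-2p}{F-2q}$ in $p$ and $q$ already suffices, so the identity is a luxury). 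Your decoupled route is shorter and self-contained for this lower bound; the paper's coupled estimate is the one it actually needs for the companion upper bound (Theorem \ref{thm1.4}), so it reuses that machinery here. Your equality example $A=\diag(\sigma_1,\dots,\sigma_r,0,\dots,0)$, $\widetilde A=\diag(-\widetilde\sigma_1,\dots,-\widetilde\sigma_r,\widetilde\sigma_{r+1},\dots,\widetilde\sigma_s)$ is precisely the paper's choice $S=\bigl(\begin{smallmatrix}-I_r&0\\0&\star\end{smallmatrix}\bigr)$, $T=\bigl(\begin{smallmatrix}I_r&0\\0&\star\end{smallmatrix}\bigr)$ in concrete form (and padding by zero blocks extends it to arbitrary $m,n\ge s$), so the sharpness claim is also settled.
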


Regarding the relation between the Frobenius norms of $H+\widetilde{H}$ and $A+\widetilde{A}$,  Lee \cite{Lee10} posed the following conjecture in 2010, which was affirmed by Lin-Zhang \cite{LZ22} in 2022. Recently, the author  \cite{Zha25} has presented  a new proof of Lee's conjecture via matrix Cauchy-Schwarz inequality.
\begin{thm}[Lee's conjecture]\label{lee}
	Let $A,\widetilde{A}\in \mathbb{C}^{m\times n}$ have the generalized polar decompositions (\ref{gpc}).  Then
	\begin{eqnarray*}
		\|A+\widetilde{A}\|_F\le \sqrt{\dfrac{1+\sqrt{2}}{2}}\|H+\widetilde{H}\|_F,
	\end{eqnarray*}
	where  $\sqrt{\frac{1+\sqrt{2}}{2}}$ is the optimal constant.
\end{thm}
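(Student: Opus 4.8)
The plan is to reduce the inequality to a one–variable estimate via a single trace bound coming from Hölder's inequality for Schatten norms (the ``matrix Cauchy--Schwarz'' philosophy of \cite{Zha25}). Throughout write $\|X\|_{S_p}:=\big(\sum_j\sigma_j(X)^p\big)^{1/p}$ for the Schatten $p$-norm, so that $\|\cdot\|_{S_2}=\|\cdot\|_F$ while $\|\cdot\|_{S_1}$ and $\|\cdot\|_{S_\infty}$ are the trace and spectral norms. Since $H=|A|$ and $\widetilde H=|\widetilde A|$, we have $\|A\|_F=\|H\|_F$, $\|\widetilde A\|_F=\|\widetilde H\|_F$, and so, writing $a:=\|H\|_F^2$, $b:=\|\widetilde H\|_F^2$, $d:=\tr(H\widetilde H)\ge 0$,
\[
\|A+\widetilde A\|_F^2=a+b+2\,\mathrm{Re}\,\tr(A^*\widetilde A),\qquad \|H+\widetilde H\|_F^2=a+b+2d .
\]
We may assume $A,\widetilde A\neq 0$, so $a,b>0$, and the task is to show $a+b+2\,\mathrm{Re}\,\tr(A^*\widetilde A)\le\frac{1+\sqrt2}{2}(a+b+2d)$.

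The core step is the bound $\mathrm{Re}\,\tr(A^*\widetilde A)\le (ab)^{1/4}\sqrt d$. From the polar factorizations, $A^*\widetilde A=|A|\,W\,|\widetilde A|$ with $W:=Q^*\widetilde Q$, and $\|W\|_{S_\infty}\le 1$ because $Q$ and $\widetilde Q$ are subunitary. Using cyclicity of the trace and the duality $|\tr(MW)|\le\|M\|_{S_1}\|W\|_{S_\infty}$,
\[
\mathrm{Re}\,\tr(A^*\widetilde A)\ \le\ \big|\tr\big(|\widetilde A|\,|A|\,W\big)\big|\ \le\ \big\||\widetilde A|\,|A|\big\|_{S_1}=\big\||A|\,|\widetilde A|\big\|_{S_1}.
\]
Now apply the three–factor Hölder inequality with exponents $(4,2,4)$ to $|A|\,|\widetilde A|=|A|^{1/2}\cdot\big(|A|^{1/2}|\widetilde A|^{1/2}\big)\cdot|\widetilde A|^{1/2}$:
\[
\big\||A|\,|\widetilde A|\big\|_{S_1}\le \big\||A|^{1/2}\big\|_{S_4}\,\big\||A|^{1/2}|\widetilde A|^{1/2}\big\|_{S_2}\,\big\||\widetilde A|^{1/2}\big\|_{S_4}
=(\tr|A|^2)^{1/4}\,(\tr|A||\widetilde A|)^{1/2}\,(\tr|\widetilde A|^2)^{1/4}=(ab)^{1/4}\sqrt d .
\]
(Alternatively the same bound follows from two applications of the Cauchy--Schwarz inequality for $\|\cdot\|_F$, using $\big\|W^*|A|W\big\|_F\le\big\||A|\big\|_F$.)

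It then remains to check the scalar inequality $a+b+2(ab)^{1/4}\sqrt d\le\frac{1+\sqrt2}{2}(a+b+2d)$ for all $a,b>0$, $d\ge0$. Dividing by $\sqrt{ab}$ and setting $c:=\frac{a+b}{2\sqrt{ab}}\ge 1$, $y:=\sqrt d\,/(ab)^{1/4}\ge0$, it becomes $c+y\le\frac{1+\sqrt2}{2}(c+y^2)$, which holds because
\[
\frac{1+\sqrt2}{2}(c+y^2)-(c+y)=\frac{\sqrt2-1}{2}(c-1)+\frac{1+\sqrt2}{2}\big(y-(\sqrt2-1)\big)^2\ \ge\ 0 .
\]
Equality forces $c=1$ (i.e.\ $a=b$) and $y=\sqrt2-1$; tracing this back, the rank–one pair $A=e_1e_1^*$, $\widetilde A=e_1\widetilde v^{\,*}$ with $\widetilde v$ a unit vector satisfying $\langle e_1,\widetilde v\rangle=\sqrt2-1$ turns every inequality above into an equality, so $\sqrt{(1+\sqrt2)/2}$ is optimal.

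The step I expect to be the real obstacle is \emph{finding the correct trace bound}. The naive two–factor Cauchy--Schwarz only yields $\mathrm{Re}\,\tr(A^*\widetilde A)\le\sqrt{ab}$, which discards $d$ entirely and already fails the target inequality when $d$ is small; the gain comes precisely from the asymmetric $(4,2,4)$ splitting of $|A|\,|\widetilde A|$, which extracts the factor $(\tr|A||\widetilde A|)^{1/2}=\sqrt d$ at the cost of only fourth roots of $a$ and $b$. Once that estimate is in hand, the remainder is the short perfect–square computation above together with the explicit extremal rank–one pair certifying sharpness.
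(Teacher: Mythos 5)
Your proof is correct. Every step checks out: the factorization $A^*\widetilde A=|A|\,Q^*\widetilde Q\,|\widetilde A|$ with $\|Q^*\widetilde Q\|_{S_\infty}\le 1$, the trace-norm duality, the $(4,2,4)$ Schatten--H\"older splitting giving $\Re\,\tr(A^*\widetilde A)\le(\|A\|_F\|\widetilde A\|_F)^{1/2}\,(\tr H\widetilde H)^{1/2}$, the algebraic identity $\frac{1+\sqrt2}{2}(c+y^2)-(c+y)=\frac{\sqrt2-1}{2}(c-1)+\frac{1+\sqrt2}{2}\bigl(y-(\sqrt2-1)\bigr)^2$ (which I verified expands correctly), and the rank-one extremal pair, which indeed gives ratio exactly $\frac{1+\sqrt2}{2}$. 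Note, however, that your route differs from the paper's. The paper does not prove Theorem \ref{lee} directly: it quotes it (crediting Lin--Zhang and \cite{Zha25}) and obtains it as a corollary of the sharper Theorem \ref{slee}, whose proof works in the SVD frame with unitary matrices $S=\widetilde U^*U$, $T=\widetilde V^*V$ and rests on the finer estimate $|M|\le G_{r,r}^{1/2}N^{1/2}$ with $G_{r,r}=\sum_j\sigma_j\widetilde\sigma_j$, where $M=\Re\,\tr(\widetilde A^*A)$ and $N=\tr(H\widetilde H)$. Your key inequality is the coarser version of this with $G_{r,r}$ replaced by $\|A\|_F\|\widetilde A\|_F\ge G_{r,r}$ (it is exactly the $\cos^2\alpha\le\cos\beta$ bound of Lin--Zhang's Lemma 3 that the paper cites in its new proof of Araki--Yamagami, here reproved self-containedly via the subunitary factor and H\"older), and your scalar step is an explicit sum-of-squares identity rather than the paper's substitution $u=F_{r,s}+2G_{r,r}^{1/2}N^{1/2}$ and one-variable optimization. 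What you gain is a short, elementary, fully self-contained proof of the classical constant $\sqrt{(1+\sqrt2)/2}$ with an explicit equality analysis; what the paper's finer singular-value bookkeeping buys is the strengthened, singular-value-dependent constant of Theorem \ref{slee}, which your coarser trace bound cannot reach. The only cosmetic gap is that you should state explicitly that the case $A=0$ or $\widetilde A=0$ is trivial (the constant exceeds $1$ and $\|A\|_F=\|H\|_F$), which you implicitly assume when dividing by $\sqrt{ab}$.
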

Herein, we establish a strong sharpened version of Lee’s conjecture, which is stated as follows.
\begin{thm}[Strong Lee's conjecture]\label{slee}
	Let $r\le s$ and $A\in \mathbb{C}_r^{m\times n}, \widetilde{A}=A+E\in \mathbb{C}_s^{m\times n}$ have the generalized polar decompositions (\ref{gpc}). Then
	\begin{eqnarray*}
		\|A+\widetilde{A}\|_F\le \sqrt{\dfrac{G_{r,r}}{\sqrt{F_{r,s}^2+2G_{r,r}F_{r,s}}-F_{r,s}}}\|H+\widetilde{H}\|_F,
	\end{eqnarray*}
where $F_{r,s}:=\sum_{j=1}^r\sigma_j^2+\sum_{j=1}^s\widetilde\sigma_j^2,
G_{r,r}:=\sum_{j=1}^r\sigma_j\widetilde\sigma_j$ and the coefficient  is optimal.
\end{thm}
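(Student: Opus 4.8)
The plan is to reduce Strong Lee's conjecture (Theorem~\ref{slee}) to Theorem~\ref{thm1.4} by exploiting the parallelogram-type identity that connects $\|A+\widetilde A\|_F$, $\|H+\widetilde H\|_F$ and $\|H-\widetilde H\|_F$. First I would expand in the Frobenius inner product: since $A=QH$ and $\widetilde A=\widetilde Q\widetilde H$, one has
\begin{eqnarray*}
\|A+\widetilde A\|_F^2 &=& \|A\|_F^2+\|\widetilde A\|_F^2 + 2\,\tr\,(\widetilde H\widetilde Q^*QH),\\
\|H+\widetilde H\|_F^2 &=& \|H\|_F^2+\|\widetilde H\|_F^2 + 2\,\tr\,(\widetilde H H),
\end{eqnarray*}
and $\|A\|_F^2=\|H\|_F^2=\sum\sigma_j^2$, $\|\widetilde A\|_F^2=\|\widetilde H\|_F^2=\sum\widetilde\sigma_j^2$, so both squared norms share the same ``diagonal'' part $F_{r,s}$. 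Hence controlling $\|A+\widetilde A\|_F/\|H+\widetilde H\|_F$ is equivalent to bounding the cross term $\tr\,(\widetilde H\widetilde Q^*QH)$ against $\tr\,(\widetilde H H)$. The key observation is that the same cross terms govern $\|E\|_F^2=\|A-\widetilde A\|_F^2 = F_{r,s}-2\,\tr\,(\widetilde H\widetilde Q^*QH)$ and $\|H-\widetilde H\|_F^2 = F_{r,s}-2\,\tr\,(\widetilde H H)$, so Theorem~\ref{thm1.4}'s inequality $\|H-\widetilde H\|_F^2\le g_1(t)\|E\|_F^2$ with $t=F_{r,s}/G_{r,r}$ translates directly into a lower bound on $\tr\,(\widetilde H H)$ in terms of $\tr\,(\widetilde H\widetilde Q^*QH)$; substituting $A+\widetilde A$ and $H+\widetilde H$ in place of $A-\widetilde A$ and $H-\widetilde H$ (equivalently, replacing $E$ by $A+\widetilde A$, which only flips the sign of the cross term) then yields the claimed bound after algebraic simplification.

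More concretely, I would set $x=\tr\,(\widetilde H\widetilde Q^*QH)$ and $y=\tr\,(\widetilde H H)$, note that $G_{r,r}=\sum\sigma_j\widetilde\sigma_j$ is an \emph{upper} bound for $y$ (von Neumann / Cauchy--Schwarz on singular values, since $H,\widetilde H$ share row/column spaces appropriately), and that Theorem~\ref{thm1.4} is exactly the statement $F_{r,s}-2y\le g_1(t)(F_{r,s}-2x)$. Rearranging gives $y\ge \tfrac12\big(F_{r,s}-g_1(t)(F_{r,s}-2x)\big)$. Then
\[
\frac{\|A+\widetilde A\|_F^2}{\|H+\widetilde H\|_F^2}=\frac{F_{r,s}+2x}{F_{r,s}+2y}\le \frac{F_{r,s}+2x}{2F_{r,s}-g_1(t)(F_{r,s}-2x)},
\]
and I would maximize the right-hand side over the admissible range of $x$ (note $x$ ranges in an interval with endpoints determined by $\pm G_{r,r}$-type bounds). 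A short computation — using $g_1(t)=t-\sqrt{t^2-2t}$ and $t=F_{r,s}/G_{r,r}$, so that $g_1(t)F_{r,s}=F_{r,s}^2/G_{r,r}-\sqrt{F_{r,s}^4/G_{r,r}^2-2F_{r,s}^3/G_{r,r}}$, i.e.\ $g_1(t)F_{r,s}=\big(F_{r,s}^2-\sqrt{F_{r,s}^4-2G_{r,r}F_{r,s}^3}\big)/G_{r,r}$ — should collapse the bound to $G_{r,r}/\big(\sqrt{F_{r,s}^2+2G_{r,r}F_{r,s}}-F_{r,s}\big)$, which is the coefficient squared in Theorem~\ref{slee}. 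Indeed one can check the optimizing $x$ is the one making $\|A+\widetilde A\|_F$ as large as possible relative to the constraint, i.e.\ $x=G_{r,r}$ is \emph{not} attainable simultaneously with equality in Theorem~\ref{thm1.4}, so the true extremum is interior and is pinned down by the structure of the extremal configuration.

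For optimality (sharpness) of the constant, I would exhibit the same family of $2\times2$ (or block-diagonal $2\times2$) examples that attain equality in Theorem~\ref{thm1.4}: a direct sum of blocks of the form $\begin{pmatrix}\sigma & 0\\ 0 & 0\end{pmatrix}$ and rotated counterparts for $\widetilde A$, with the rotation angle tuned so that equality propagates through all the inequalities above. Concretely, using the extremal case for Theorem~\ref{thm1.4} and checking that the AM--GM / von Neumann steps are simultaneously tight forces a specific relationship among $\sigma_j,\widetilde\sigma_j$ and the angles; plugging that configuration into $\|A+\widetilde A\|_F^2/\|H+\widetilde H\|_F^2$ gives exactly $G_{r,r}/\big(\sqrt{F_{r,s}^2+2G_{r,r}F_{r,s}}-F_{r,s}\big)$. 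When $r=s$ and all $\sigma_j=\widetilde\sigma_j=1$ this reduces to $(1+\sqrt2)/2$, recovering the optimal constant in Lee's original conjecture (Theorem~\ref{lee}), which is a useful consistency check.

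The main obstacle I anticipate is \emph{not} the reduction itself but verifying that the reduction loses nothing — that is, confirming that the extremal configurations for Theorem~\ref{thm1.4} also saturate all the auxiliary inequalities ($y\le G_{r,r}$, the choice of optimizing $x$, and the range constraint on $x$) simultaneously, so that the composed bound is genuinely sharp rather than merely an upper estimate. In particular one must rule out the possibility that a \emph{different} configuration, not extremal for Theorem~\ref{thm1.4}, yields a larger ratio $\|A+\widetilde A\|_F/\|H+\widetilde H\|_F$; this requires analyzing the joint feasible region of the pair $(x,y)$ — equivalently the joint behavior of $\tr\,(\widetilde H\widetilde Q^*QH)$ and $\tr\,(\widetilde H H)$ — rather than each coordinate in isolation. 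I expect this to follow from the same convex-analytic description of the feasible region used to prove Theorem~\ref{thm1.4}, applied now with the roles of ``$+$'' and ``$-$'' interchanged, but it is the step that needs genuine care.
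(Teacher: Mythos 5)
Your reduction to Theorem \ref{thm1.4} as a black box does not work, and the gap is exactly the one you flag in your last paragraph. Write $M=\Re\,\tr(\widetilde H\widetilde Q^*QH)$ and $N=\tr(\widetilde H H)$ (your $x$ and $y$). The paper's proof of both theorems rests on the \emph{nonlinear} coupling $|M|\le G_{r,r}^{1/2}N^{1/2}$ (a weighted Cauchy--Schwarz in the entries of $S=\widetilde U^*U$, $T=\widetilde V^*V$) together with $N\in[0,G_{r,r}]$; Theorem \ref{thm1.4} is obtained by maximizing $(F_{r,s}-2N)/(F_{r,s}-2M)$ over this region, and the maximizing $N$ there is \emph{different} from the maximizing $N$ for $(F_{r,s}+2M)/(F_{r,s}+2N)$. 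Replacing the region by its single linear consequence $F_{r,s}-2N\le g_1(t)(F_{r,s}-2M)$ is therefore strictly lossy. Concretely, your bound is $h(x)=\frac{F_{r,s}+2x}{(2-g_1(t))F_{r,s}+2g_1(t)x}$, a M\"obius function whose derivative has the sign of $1-g_1(t)<0$ (since $g_1(t)>1$ for all $t\ge2$), so $h$ is \emph{decreasing} in $x$ and its maximum over the admissible interval sits at the left endpoint $x=-G_{r,r}$ — not at an interior point as you assert — where the denominator can even be negative (e.g.\ $t=F_{r,s}/G_{r,r}=3$ gives $(2-g_1(3))\cdot3-2g_1(3)=5\sqrt3-9<0$), so the bound is vacuous there. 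Even evaluated at the true optimizer $M=\tfrac12(\sqrt{F_{r,s}^2+2G_{r,r}F_{r,s}}-F_{r,s})$, the lower bound on $N$ you extract from Theorem \ref{thm1.4} is strictly weaker than the true constraint $N\ge M^2/G_{r,r}$ (at $t=3$ it gives $N\ge 0.151G_{r,r}$ versus the true $N=0.190G_{r,r}$), so $h$ strictly overshoots the sharp constant $g_2(3)=(\sqrt{15}+3)/6$. For the same reason, the extremal configuration of Theorem \ref{thm1.4} does \emph{not} saturate Theorem \ref{slee}, so your proposed sharpness example is also off.

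The repair is what you hint at but do not carry out: work directly with the pair $(M,N)$. From $|M|\le G_{r,r}^{1/2}N^{1/2}$ one gets
$\frac{F_{r,s}+2M}{F_{r,s}+2N}\le\frac{F_{r,s}+2G_{r,r}^{1/2}N^{1/2}}{F_{r,s}+2N}$,
and substituting $u=F_{r,s}+2G_{r,r}^{1/2}N^{1/2}\in[F_{r,s},F_{r,s}+2G_{r,r}]$ turns the right side into $2G_{r,r}\bigl/\bigl(u+\tfrac{F_{r,s}^2+2G_{r,r}F_{r,s}}{u}-2F_{r,s}\bigr)$, maximized at $u=\sqrt{F_{r,s}^2+2G_{r,r}F_{r,s}}$ by AM--GM. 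This is the paper's argument; sharpness is then shown by exhibiting unitary blocks $S,T$ realizing $M=G_{r,r}^{1/2}N^{1/2}$ with $N$ equal to that specific optimizer (the paper takes $T$ with leading block $\frac{F_{r,s}}{F_{r,s}+\sqrt{F_{r,s}^2+2G_{r,r}F_{r,s}}}I_r$), not the Theorem \ref{thm1.4} extremizer. Your opening identities (the shared diagonal part $F_{r,s}$, and $N\le G_{r,r}$ by von Neumann) are correct and match the paper; only the middle of the argument needs to be replaced.
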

The corresponding sharp lower bound is given by the following.
\begin{thm}[Strong Lee's conjecture]\label{cslee}
	Let $r\le s$ and $A\in \mathbb{C}_r^{m\times n}, \widetilde{A}=A+E\in \mathbb{C}_s^{m\times n}$ have the generalized polar decompositions (\ref{gpc}). Then
	\begin{eqnarray*}
		\|A+\widetilde{A}\|_F\ge \sqrt{\dfrac{F_{r,s}-2G_{r,r}}{F_{r,s}+2G_{r,r}}}\|H+\widetilde{H}\|_F,
	\end{eqnarray*}
	where $F_{r,s}:=\sum_{j=1}^r\sigma_j^2+\sum_{j=1}^s\widetilde\sigma_j^2,
	G_{r,r}:=\sum_{j=1}^r\sigma_j\widetilde\sigma_j$ and the coefficient  is optimal.
\end{thm}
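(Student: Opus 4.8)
The plan is to reduce the inequality to two elementary trace estimates, in the same spirit as the proof of Theorem~\ref{thm1.8}, exploiting that $H=|A|$ and $\widetilde H=|\widetilde A|$, so that $\|H\|_F=\|A\|_F$ and $\|\widetilde H\|_F=\|\widetilde A\|_F$. First I would expand both sides in terms of traces:
\[
\|A+\widetilde A\|_F^2=F_{r,s}+2\,\Re\,\tr(\widetilde A^{\,*}A),\qquad
\|H+\widetilde H\|_F^2=F_{r,s}+2\,\tr(H\widetilde H),
\]
since $\|A\|_F^2+\|\widetilde A\|_F^2=\|H\|_F^2+\|\widetilde H\|_F^2=F_{r,s}$. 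Thus the claim is equivalent to bounding $\Re\,\tr(\widetilde A^{\,*}A)$ from below and $\tr(H\widetilde H)$ from above.

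For $\tr(H\widetilde H)$: the matrices $H,\widetilde H$ are positive semidefinite with eigenvalue lists $(\sigma_1,\dots,\sigma_r,0,\dots)$ and $(\widetilde\sigma_1,\dots,\widetilde\sigma_s,0,\dots)$, so von Neumann's trace inequality (in decreasing order) together with $r\le s$ yields
\[
0\le \tr(H\widetilde H)\le \sum_{j\ge 1}\lambda_j(H)\lambda_j(\widetilde H)=\sum_{j=1}^{r}\sigma_j\widetilde\sigma_j=G_{r,r}.
\]
For the other cross term, again by von Neumann's trace inequality $|\Re\,\tr(\widetilde A^{\,*}A)|\le|\tr(\widetilde A^{\,*}A)|\le\sum_{j\ge1}\sigma_j(A)\sigma_j(\widetilde A)=G_{r,r}$, whence $\Re\,\tr(\widetilde A^{\,*}A)\ge -G_{r,r}$. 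Combining, $\|A+\widetilde A\|_F^2\ge F_{r,s}-2G_{r,r}$ and $\|H+\widetilde H\|_F^2\le F_{r,s}+2G_{r,r}$; since $F_{r,s}\ge 2G_{r,r}$ by the AM--GM inequality the right-hand side of the first estimate is nonnegative, and after disposing separately of the trivial case $A=\widetilde A=0$ (where $H+\widetilde H=0$ and there is nothing to prove), dividing the two estimates gives the stated bound.

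For optimality I would use a sign-flipped diagonal example. Assuming $s\le\min\{m,n\}$ (otherwise no rank-$s$ matrix exists), take, as $m\times n$ matrices,
\[
A=\diag(\sigma_1,\dots,\sigma_r,0,\dots,0),\qquad
\widetilde A=\diag(-\widetilde\sigma_1,\dots,-\widetilde\sigma_r,\widetilde\sigma_{r+1},\dots,\widetilde\sigma_s).
\]
Then $H=\diag(\sigma_1,\dots,\sigma_r,0,\dots,0)$ and $\widetilde H=\diag(\widetilde\sigma_1,\dots,\widetilde\sigma_s)$, so $\tr(H\widetilde H)=G_{r,r}$ and $\tr(\widetilde A^{\,*}A)=-G_{r,r}$; a direct computation then gives $\|A+\widetilde A\|_F^2=F_{r,s}-2G_{r,r}$ and $\|H+\widetilde H\|_F^2=F_{r,s}+2G_{r,r}$, so equality holds throughout and the coefficient cannot be improved for the given singular value data.

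The substantive content is slight: the one real observation is that the two cross terms must be estimated in \emph{opposite} directions (lower bound on $\Re\,\tr(\widetilde A^{\,*}A)$, upper bound on $\tr(H\widetilde H)$), and that both von Neumann estimates are attained \emph{simultaneously} by the same sign-flipped diagonal construction. The place to be careful is the bookkeeping with the rank gap $r<s$: one must count the padding zeros in the eigenvalue and singular-value lists correctly so that the von Neumann sums truncate at $r$ and collapse to $G_{r,r}$ rather than running to $s$, and one should not overlook the degenerate normalization $\|H+\widetilde H\|_F=0$.
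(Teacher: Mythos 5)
Your proof is correct, and it reaches the bound by a genuinely more elementary route than the paper. The paper runs this theorem through its general machinery: with $S=\widetilde U^*U$, $T=\widetilde V^*V$ it writes the ratio as $(F_{r,s}+2M)/(F_{r,s}+2N)$, where $M=\Re\,\tr(\widetilde A^*A)=\sum_{i,j}\widetilde\sigma_i\sigma_j\Re(s_{ij}\overline{t_{ij}})$ and $N=\tr(H\widetilde H)=\sum_{i,j}\widetilde\sigma_i\sigma_j|t_{ij}|^2$, then uses the coupled estimate $M\ge -G_{r,r}^{1/2}N^{1/2}$ followed by a monotonicity argument over $N\in[0,G_{r,r}]$, the range of $N$ being justified by the quasi-convexity/extreme-point lemmas of Section \ref{s2}. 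You instead decouple the two cross terms completely, bounding $\Re\,\tr(\widetilde A^*A)\ge -G_{r,r}$ and $\tr(H\widetilde H)\le G_{r,r}$ separately via von Neumann's trace inequality; this cruder decoupled estimate happens to be exactly sharp for this lower bound (it would not suffice for the companion upper bound of Theorem \ref{slee}, where the coupling $|M|\le G_{r,r}^{1/2}N^{1/2}$ is essential), and your sign-flipped diagonal equality example is in substance the same as the paper's, which realizes it through the block choices $S$ with leading block $-I_r$ and $T$ with leading block $I_r$. What your route buys is independence from Lemma \ref{keylem} and the extreme-point analysis, at the cost of not generalizing to the other three theorems proved with the same apparatus; your care about the normalization details ($F_{r,s}\ge 2G_{r,r}$ so the two one-sided estimates can be divided, truncation of the von Neumann sums at $r$, and the degenerate case $H+\widetilde H=0$) is correct and closes the only places where such an argument could leak.
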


Set $t=\frac{F_{r,s}}{G_{r,r}}$. The  coefficient squared in Theorem \ref{slee}
\begin{eqnarray*}
	\dfrac{G_{r,r}}{\sqrt{F_{r,s}^2+2G_{r,r}F_{r,s}}-F_{r,s}}&=&\dfrac{1}{\sqrt{t^2+2t}-t}\overset{\triangle}{=} g_2(t),
\end{eqnarray*}
since $g_2'(t)=\dfrac{\sqrt{t^2+2t}-(t+1)}{\sqrt{t^2+2t}\left(\sqrt{t^2+2t}-t \right)^2 }<0$, $g_2(t)\le g_2(2)=\frac{1}{2\sqrt{2}-2}=\frac{1+\sqrt{2}}{2}$. Thus, we have
\begin{rem}
	Theorem \ref{slee} is stronger than Theorem \ref{lee}.
\end{rem}
\begin{rem}
	Let  $A\in \mathbb{C}_r^{m\times n},\widetilde{A}\in \mathbb{C}_s^{m\times n}$ have the generalized polar decompositions (\ref{gpc})  and $A, \widetilde{A}\neq 0$.  Then for $r\neq s$ or $r=s$ but $\sigma_j\neq \widetilde{\sigma}_j$ for some $j$, we have
	\begin{eqnarray*}
		\|A+\widetilde{A}\|_F< \sqrt{\dfrac{1+\sqrt{2}}{2}}\|H+\widetilde{H}\|_F.
	\end{eqnarray*}
\end{rem}
Finally, we present several examples to demonstrate the potential of our method to address inequalities involving the Frobenius norm of matrices, such as refinements of the matrix arithmetic-geometric mean (AM-GM) inequality \cite[p. 263]{Bha97}, the Cauchy-Schwarz inequality\cite[p. 266]{Bha97}, and giving a lower bound of Kittaneh's result \cite{Kit86}. For clarity, we assume the singular values of $B\in \mathbb{C}^{m\times n}_s$ are arranged in decreasing order as $\widehat{\sigma}_1\ge \ldots\ge \widehat{\sigma}_s>0$.

The classic AM-GM inequality involving the Frobenius norm of matrices states that 
\begin{thm}[AM-GM]
	Let $A,B\in \mathbb{C}^{m\times n}$. Then
	\[
	\|AB^*\|_F\le \frac{1}{2}\|\left|A\right|^2+\left|B\right|^2\|_F.
	\]
\end{thm}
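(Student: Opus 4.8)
The plan is to reduce the inequality, via a single trace identity, to the combination of the Hilbert--Schmidt Cauchy--Schwarz inequality and the elementary scalar bound $2xy\le x^2+y^2$. First I would record
\[
\|AB^*\|_F^2=\tr\big(AB^*BA^*\big)=\tr\big(A^*AB^*B\big)=\tr\big(|A|^2|B|^2\big),
\]
using $\|X\|_F^2=\tr(XX^*)$, the identities $|A|^2=A^*A$ and $|B|^2=B^*B$, and the cyclicity of the trace. Since $|A|^2$ and $|B|^2$ are Hermitian, $|A|^2|B|^2$ and $|B|^2|A|^2$ are adjoints of one another, so $c:=\tr(|A|^2|B|^2)$ is real; moreover $c\ge 0$, because the trace of a product of two positive semidefinite matrices is nonnegative. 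Setting $a:=\||A|^2\|_F$ and $b:=\||B|^2\|_F$ and expanding the square of the Frobenius norm on the right-hand side, one gets
\[
\tfrac14\,\big\||A|^2+|B|^2\big\|_F^2=\tfrac14\big(a^2+2c+b^2\big),
\]
so, after squaring the desired inequality, it becomes equivalent to $c\le\tfrac14(a^2+2c+b^2)$, that is, to $2c\le a^2+b^2$.

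To finish, I would view $c=\tr(|A|^2|B|^2)=\langle|A|^2,|B|^2\rangle_F$ as the Hilbert--Schmidt inner product of the two Hermitian matrices $|A|^2$ and $|B|^2$. Cauchy--Schwarz then gives $c\le ab$, and the scalar AM--GM inequality gives $2ab\le a^2+b^2$; hence $2c\le 2ab\le a^2+b^2$, which is precisely what is needed.

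For the bare inequality as stated there is essentially no obstacle; the only points requiring a little care are the bookkeeping of the cross term in the expansion and the nonnegativity of $c$. The substantive step --- and the place where this paper's machinery is meant to enter --- is the strengthening announced in the abstract. Writing thin singular value decompositions $A=U\Sigma V^*$, $B=P\widehat{\Sigma}W^*$ and putting $M:=V^*W$, one has $|A|^2=V\Sigma^2V^*$, $|B|^2=W\widehat{\Sigma}^2W^*$, and, since $U$ and $P$ have orthonormal columns,
\begin{gather*}
\|AB^*\|_F^2=\|\Sigma M\widehat{\Sigma}\|_F^2=\sum_{i,j}\sigma_i^2\,\widehat{\sigma}_j^2\,|M_{ij}|^2=c,\\
\big\||A|^2+|B|^2\big\|_F^2=\sum_i\sigma_i^4+2c+\sum_j\widehat{\sigma}_j^4,
\end{gather*}
while, $V$ and $W$ having orthonormal columns, the array $(|M_{ij}|^2)$ is doubly substochastic. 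Applying $2\sigma_i^2\widehat{\sigma}_j^2\le\sigma_i^4+\widehat{\sigma}_j^4$ entrywise, together with the row and column sum bounds for $(|M_{ij}|^2)$, recovers $2c\le\sum_i\sigma_i^4+\sum_j\widehat{\sigma}_j^4=a^2+b^2$. Keeping, rather than discarding, the slack in these two places and then optimizing the resulting fractional expression over doubly substochastic arrays --- exactly the kind of convex/concave reduction carried out for Theorems~\ref{thm1.3}--\ref{cslee} --- would yield the sharp, singular-value-dependent refinement; performing that optimization, and identifying the equality configurations so as to certify optimality of the constant, is the part I expect to demand the most work.
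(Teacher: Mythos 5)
Your proof of the stated inequality is correct: the reduction of $\|AB^*\|_F\le\tfrac12\||A|^2+|B|^2\|_F$ to $2\,\tr(|A|^2|B|^2)\le\||A|^2\|_F^2+\||B|^2\|_F^2$ via the trace identity and the expansion of the square is exact, and the Hilbert--Schmidt Cauchy--Schwarz inequality plus scalar AM--GM closes it. This is, however, a genuinely different route from the paper: the paper does not prove the classical AM--GM bound directly at all (it cites Bhatia for it) and instead obtains it as a consequence of the sharper Theorem~\ref{sag}, whose proof writes everything through singular value decompositions, identifies $\|AB^*\|_F^2=\sum_{i,j}\widehat{\sigma}_i^2\sigma_j^2|t_{ij}|^2$ and $\||A|^2+|B|^2\|_F^2=\sum_j\sigma_j^4+\sum_j\widehat{\sigma}_j^4+2\sum_{i,j}\widehat{\sigma}_i^2\sigma_j^2|t_{ij}|^2$, and then uses the double substochasticity of $(|t_{ij}|^2)$ (through the convex-analysis/rearrangement machinery of Section~\ref{s2}) to bound the cross term by $\sum_j\sigma_j^2\widehat{\sigma}_j^2$, which yields the optimal singular-value-dependent coefficient; the factor $\tfrac12$ then follows since that coefficient never exceeds $\tfrac12$. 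Your argument buys brevity and self-containedness for the bare statement, at the cost of giving no information about when the constant $\tfrac12$ can be improved; the concluding sketch you give (keeping the slack in the entrywise AM--GM and the row/column sums and optimizing over doubly substochastic arrays) is essentially the paper's actual proof of Theorem~\ref{sag}, so you have correctly located where the paper's machinery enters.
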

We prove that
\begin{thm}[Stronger AM-GM]\label{sag}
	Let $r\le s$ and $A\in\mathbb{C}_r^{m\times n},B\in \mathbb{C}_s^{m\times n}$. Then
	\[
	\|AB^*\|_F\le \left(\frac{\sum_{j=1}^r\sigma_j^2\widehat{\sigma}_j^2}{\sum_{j=1}^r\sigma_j^4+\sum_{j=1}^s\widehat{\sigma}_j^4+2\sum_{j=1}^r\sigma_j^2\widehat{\sigma}_j^2} \right)^\frac{1}{2}\|\left|A\right|^2+\left|B\right|^2\|_F,
	\]
	where the coefficient is optimal.
\end{thm}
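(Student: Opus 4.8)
The plan is to convert both sides into traces and reduce the statement to von Neumann's trace inequality. Write $H=|A|$ and $K=|B|$, so that $H^2=A^*A$ and $K^2=B^*B$ are positive semidefinite with eigenvalues $\sigma_1^2\ge\cdots\ge\sigma_r^2>0$ and $\widehat\sigma_1^2\ge\cdots\ge\widehat\sigma_s^2>0$, each padded by zeros. First I would record the two identities
\[
\|AB^*\|_F^2=\tr(AB^*BA^*)=\tr(A^*A\,B^*B)=\tr(H^2K^2)
\]
and, on expanding the square,
\[
\big\||A|^2+|B|^2\big\|_F^2=\tr(H^4)+2\tr(H^2K^2)+\tr(K^4)=\sum_{j=1}^r\sigma_j^4+\sum_{j=1}^s\widehat\sigma_j^4+2\tr(H^2K^2).
\]
Setting $a:=\tr(H^2K^2)$, $D:=\sum_{j=1}^r\sigma_j^2\widehat\sigma_j^2$ and $P:=\sum_{j=1}^r\sigma_j^4+\sum_{j=1}^s\widehat\sigma_j^4>0$, the asserted inequality becomes precisely $a\le \tfrac{D}{P+2D}(P+2a)$.

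The second step is to clear denominators: multiplying by $P+2D>0$, the inequality $a(P+2D)\le D(P+2a)$ simplifies to $aP\le DP$, i.e. to the single scalar inequality $a\le D$. Hence the whole theorem collapses to $\tr(H^2K^2)\le\sum_{j=1}^r\sigma_j^2\widehat\sigma_j^2$, which is von Neumann's trace inequality for the positive semidefinite pair $H^2,K^2$, namely $\tr(H^2K^2)\le\sum_i\lambda_i(H^2)\lambda_i(K^2)$ with eigenvalues listed in decreasing order; since $r\le s$, the product of the $i$-th largest eigenvalues vanishes for $i>r$, so the right-hand side is $\sum_{i=1}^r\sigma_i^2\widehat\sigma_i^2=D$. (If one prefers not to invoke von Neumann, this follows equally from Ky Fan's maximum principle or a direct rearrangement argument.) In passing, $P\ge 2D$ by the termwise bound $\sigma_j^4+\widehat\sigma_j^4\ge 2\sigma_j^2\widehat\sigma_j^2$, so $\tfrac{D}{P+2D}\le\tfrac14$ and the stated bound is indeed a refinement of the classical AM-GM inequality.

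For optimality I would take $m=n\ge s$ and let $A=\diag(\sigma_1,\dots,\sigma_r,0,\dots,0)$, $B=\diag(\widehat\sigma_1,\dots,\widehat\sigma_s,0,\dots,0)$. Then $H^2K^2$ is diagonal with $a=\tr(H^2K^2)=D$, every inequality above becomes an equality, and the constant $\big(D/(P+2D)\big)^{1/2}$ is attained, so it cannot be decreased for matrices with the given singular values. I do not expect a genuine obstacle: the real content is recognizing that the elaborate coefficient is engineered so that the bound is self-referential in $a=\|AB^*\|_F^2$ and reduces to $a\le D$. The only care needed is that $P>0$ keeps the rearrangement reversible, and that the rank gap $r\le s$ is handled correctly in von Neumann's inequality (the surplus eigenvalues $\widehat\sigma_{r+1}^2,\dots,\widehat\sigma_s^2$ pair with zeros and never enter $D$), together with picking admissible dimensions $m,n$ in the extremal example.
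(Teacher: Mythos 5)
Your proposal is correct and is essentially the paper's argument in different notation: both proofs hinge on the single inequality $\tr\,(|A|^2|B|^2)\le\sum_{j=1}^r\sigma_j^2\widehat{\sigma}_j^2$, which the paper obtains by writing the cross term as $\sum_{i=1}^s\sum_{j=1}^r\widehat{\sigma}_i^2\sigma_j^2\left|t_{ij}\right|^2$ via the SVD and its doubly-substochastic/rearrangement machinery, whereas you cite von Neumann's trace inequality directly and justify the substitution by clearing denominators rather than by the (implicit) monotonicity of $x\mapsto x/(P+2x)$. Your diagonal example also supplies the equality case for the optimality claim, which the paper's proof leaves unstated; it adapts verbatim to rectangular $m\times n$ matrices with $m,n\ge s$.
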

It readily follows from Theorem \ref{sag} that
\begin{rem}
	Let $A\in\mathbb{C}_r^{m\times n},B\in \mathbb{C}_s^{m\times n}$ and $A, B\neq 0$. Then for $r\neq s$ or $r=s$ but $\sigma_j\neq \widetilde{\sigma}_j$ for some $j$, we have
	\[
	\|AB^*\|_F< \frac{1}{2}\|\left|A\right|^2+\left|B\right|^2\|_F.
	\]	
\end{rem}
The well-known Cauchy-Schwarz inequality involving the Frobenius norm of matrices states that 
\begin{thm}[Cauchy-Schwarz]
	Let $A,B\in \mathbb{C}^{m\times n}$. Then
	\[
	\left|\tr\, B^*A\right|\le \|A\|_F\|B\|_F.
	\]
\end{thm}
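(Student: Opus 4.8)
The plan is to recognise the left-hand side as (the modulus of) an inner product and then invoke the classical Cauchy--Schwarz inequality. Define $\langle X,Y\rangle:=\tr\,Y^*X$ for $X,Y\in\mathbb{C}^{m\times n}$. First I would check that this is a Hermitian inner product on $\mathbb{C}^{m\times n}$: it is linear in its first slot, it obeys $\langle Y,X\rangle=\tr\,X^*Y=\overline{\tr\,Y^*X}=\overline{\langle X,Y\rangle}$, and $\langle X,X\rangle=\tr\,X^*X=\|X\|_F^2\ge 0$ with equality iff $X=0$; in particular the induced norm is precisely $\|\cdot\|_F$. Equivalently, and perhaps most transparently, one may vectorise: with $a=\operatorname{vec}(A)$ and $b=\operatorname{vec}(B)$ in $\mathbb{C}^{mn}$ one has $\tr\,B^*A=b^*a$, $\|A\|_F=\|a\|_2$, $\|B\|_F=\|b\|_2$, so the assertion is literally the Cauchy--Schwarz inequality on $\mathbb{C}^{mn}$.

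Granting this, the key step is the standard one-line argument in an inner product space: if $B=0$ both sides vanish, and otherwise, setting $t:=\langle A,B\rangle/\|B\|_F^2$, one expands
\[
0\le \|A-tB\|_F^2=\|A\|_F^2-\bar t\,\langle A,B\rangle-t\,\overline{\langle A,B\rangle}+|t|^2\|B\|_F^2=\|A\|_F^2-\frac{|\langle A,B\rangle|^2}{\|B\|_F^2},
\]
which rearranges to $|\tr\,B^*A|^2\le\|A\|_F^2\|B\|_F^2$; taking square roots finishes the argument. An alternative route, more in the spirit of the singular-value bookkeeping used elsewhere in this paper, would be to chain von Neumann's trace inequality $|\tr\,B^*A|\le\sum_j\sigma_j(A)\sigma_j(B)$ with the scalar Cauchy--Schwarz inequality $\sum_j\sigma_j(A)\sigma_j(B)\le\big(\sum_j\sigma_j(A)^2\big)^{1/2}\big(\sum_j\sigma_j(B)^2\big)^{1/2}=\|A\|_F\|B\|_F$, but this is unnecessarily heavy for the present statement.

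I expect no genuine obstacle here; the only point demanding the slightest care is the complex case, where the scalar $t$ must carry the correct phase (equivalently, one first multiplies $B$ by a unimodular constant so that $\langle A,B\rangle$ becomes real and nonnegative before invoking the real estimate). It is worth recording, for use in the sharpened version to follow, that equality holds exactly when $A$ and $B$ are linearly dependent, i.e.\ $A=\lambda B$ for some $\lambda\in\mathbb{C}$ or $B=0$; the refinement should then come from injecting the singular-value data of $A$ and $B$, exactly as in Theorem~\ref{sag} and Theorems~\ref{thm1.4} and~\ref{thm1.8}.
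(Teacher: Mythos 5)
Your argument is correct: identifying $\langle X,Y\rangle=\tr\,Y^*X$ as the Hermitian inner product inducing $\|\cdot\|_F$ (equivalently, vectorising) and running the standard one-line expansion of $\|A-tB\|_F^2$ with the correctly phased $t$ gives exactly the stated inequality, and your equality characterisation is right. Note, however, that the paper never proves this statement on its own; it is quoted as a classical fact (with a reference to Bhatia) and serves only as the foil for the sharpened Theorem~\ref{scs}. The route the paper's own machinery suggests is precisely your second sketch: write $A=U\Sigma V^*$, $B=\widehat{U}\widehat{\Sigma}\widehat{V}^*$, compute $\left|\tr\,B^*A\right|=\bigl|\sum_{i,j}\widehat{\sigma}_i\sigma_j\Re\left(s_{ij}\overline{t_{ij}}\right)\bigr|\le\sum_j\sigma_j\widehat{\sigma}_j$ (the bound on $|M|$ established before the proof of Theorem~\ref{scs}, essentially von Neumann's trace inequality), and then apply scalar Cauchy--Schwarz to $\sum_j\sigma_j\widehat{\sigma}_j$. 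Your inner-product proof is more elementary and self-contained, which is all this statement needs; the singular-value route is heavier but is the one that carries the extra spectral information and hence yields the sharp constant in Theorem~\ref{scs}, exactly as you anticipate in your closing remark.
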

We prove that
\begin{thm}[Stronger Cauchy-Schwarz]\label{scs}
	Let $r\le s$ and  $A\in \mathbb{C}_r^{m\times n},B\in \mathbb{C}_s^{m\times n}$. Then
	\[
	\left|\tr\, B^*A\right|\le\frac{\sum_{j=1}^r\sigma_j\widehat{\sigma}_j}{\left(\sum_{j=1}^r\sigma_{j}^2 \right)^\frac{1}{2}\left(\sum_{j=1}^s\widehat{\sigma}_j^2 \right)^\frac{1}{2}} \|A\|_F\|B\|_F,
	\]
where the coefficient is optimal.
\end{thm}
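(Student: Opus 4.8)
The first thing to notice is that the claimed bound is just a transparent rewriting of the von Neumann (Ky Fan) trace inequality. Since $\|A\|_F=\bigl(\sum_{j=1}^r\sigma_j^2\bigr)^{1/2}$ and $\|B\|_F=\bigl(\sum_{j=1}^s\widehat\sigma_j^2\bigr)^{1/2}$, the right-hand side collapses to $\sum_{j=1}^r\sigma_j\widehat\sigma_j$, and because $A$ has rank $r$ this is exactly $\sum_{j\ge 1}\sigma_j(A)\sigma_j(B)$. So the substance of the statement is the von Neumann bound $|\tr B^*A|\le\sum_{j=1}^r\sigma_j\widehat\sigma_j$, together with the remark that $\sum_{j=1}^r\sigma_j\widehat\sigma_j\le\|A\|_F\|B\|_F$ by the ordinary scalar Cauchy--Schwarz inequality applied to the truncated sequences $(\sigma_j)_{j\le r}$ and $(\widehat\sigma_j)_{j\le r}$; hence the displayed coefficient lies in $(0,1]$ and genuinely sharpens the classical inequality. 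The plan therefore splits into (i) proving $|\tr B^*A|\le\sum_{j=1}^r\sigma_j\widehat\sigma_j$ in the self-contained, convex-analytic style used elsewhere in the paper, and (ii) exhibiting matrices with the prescribed singular values for which equality holds.

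For (i) I would write the reduced singular value decompositions $A=\sum_{i=1}^r\sigma_i u_i v_i^*$ and $B=\sum_{j=1}^s\widehat\sigma_j\widehat u_j\widehat v_j^*$, so that $\tr B^*A=\sum_{i=1}^r\sum_{j=1}^s\sigma_i\widehat\sigma_j\,(\widehat u_j^*u_i)(v_i^*\widehat v_j)$. Taking moduli and applying the scalar AM--GM inequality $|xy|\le\tfrac12(|x|^2+|y|^2)$ to each product of inner products gives $|\tr B^*A|\le\tfrac12\sum_{i,j}\sigma_i\widehat\sigma_j\bigl(|\widehat u_j^*u_i|^2+|v_i^*\widehat v_j|^2\bigr)$. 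Since $\{u_i\}$, $\{\widehat u_j\}$, $\{v_i\}$, $\{\widehat v_j\}$ are orthonormal systems, Bessel's inequality shows that each of the two $s\times r$ matrices $\bigl(|\widehat u_j^*u_i|^2\bigr)$ and $\bigl(|v_i^*\widehat v_j|^2\bigr)$ has all row sums and all column sums at most $1$, i.e. is doubly substochastic. Maximizing the linear functional $C\mapsto\sum_{i,j}\sigma_i\widehat\sigma_j C_{ji}$ over the compact convex set of such $C$ is attained at an extreme point, which is a partial permutation matrix; for such a matrix the value is $\sum_j\sigma_{\tau(j)}\widehat\sigma_j\le\sum_{j=1}^r\sigma_j\widehat\sigma_j$ by the rearrangement inequality together with $r\le s$. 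Substituting this back into both of the AM--GM terms gives (i). (Alternatively one may simply cite the von Neumann trace inequality directly; the point of the above is that it fits the paper's methodology.)

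For (ii), without loss of generality $m,n\ge s$, since $\mathbb{C}^{m\times n}_s$ is nonempty. Take $A$ to be the rectangular diagonal matrix with diagonal entries $\sigma_1,\dots,\sigma_r,0,\dots,0$ and $B$ the rectangular diagonal matrix with diagonal entries $\widehat\sigma_1,\dots,\widehat\sigma_s,0,\dots,0$; then $A\in\mathbb{C}_r^{m\times n}$, $B\in\mathbb{C}_s^{m\times n}$ have the prescribed singular values, and $\tr B^*A=\sum_{j=1}^r\sigma_j\widehat\sigma_j$, which is precisely the right-hand side. Hence the coefficient cannot be decreased, i.e. it is optimal for the given singular value data. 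I expect the only non-mechanical step to be the reduction to the doubly substochastic optimization and the identification of its extreme points; this is classical (a Birkhoff--von Neumann argument for substochastic matrices, or a direct convexity argument), and everything else is routine bookkeeping with the SVD.
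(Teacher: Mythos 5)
Your proposal is correct and follows essentially the same route as the paper: both reduce the statement to the von Neumann--type bound $\left|\tr\, B^*A\right|\le\sum_{j=1}^r\sigma_j\widehat{\sigma}_j$ by expanding through the SVD, bounding the resulting bilinear sum over weight matrices that are doubly substochastic (Bessel), and invoking the extreme-point/sub-permutation characterization together with the rearrangement inequality --- your only deviation is using termwise AM--GM where the paper uses a weighted Cauchy--Schwarz step through the quantity $N$. A small plus: you exhibit the diagonal equality example justifying ``the coefficient is optimal,'' which the paper's proof leaves implicit.
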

\begin{rem}
	Let   $A\in \mathbb{C}_r^{m\times n},B\in \mathbb{C}_s^{m\times n}$  and $A, B\neq 0$. Then for $r\neq s$ or $r=s$ but $\sigma_j\neq \widetilde{\sigma}_j$ for some $j$, we have
	\[
	\left|\tr\, B^*A\right|< \|A\|_F\|B\|_F.
	\]
\end{rem}

Kittaneh \cite{Kit86} gives an improvement of Araki-Yamagami inequality (\ref{akbound}).

\begin{thm}[Kittaneh]\label{thmkit}
	Let $A,B\in \mathbb{C}^{m\times n}$. Then
	\[
	\|\left|A\right|-\left|B\right|\|_F^2+\|\left|A^*\right|-\left|B^*\right|\|_F^2\le 2\|A-B\|_F^2.
	\]
\end{thm}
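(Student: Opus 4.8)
The plan is to convert Kittaneh's inequality into a single scalar trace inequality and then close the gap with the Cauchy--Schwarz inequality for the Frobenius inner product $\langle X,Y\rangle:=\tr(X^*Y)$. Fix generalized polar decompositions $A=U|A|$ and $B=V|B|$, where $U,V$ are subunitary, $U^*U$ is the orthogonal projection onto $\mathcal R(|A|)$, and $V^*V$ the projection onto $\mathcal R(|B|)$. The first point to record is that $|A^*|=U|A|U^*$ and $|B^*|=V|B|V^*$, even in the rank-deficient case: indeed $(U|A|U^*)^2=U|A|(U^*U)|A|U^*=U|A|^2U^*=AA^*$, because $(U^*U)|A|=|A|$ (the projection $U^*U$ acts as the identity on $\mathcal R(|A|)$), and $U|A|U^*$ is positive semidefinite, so it must equal $(AA^*)^{1/2}$. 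Alternatively one may first pad $A$ and $B$ with zero rows or columns to make them square and then extend $U,V$ to genuine unitaries; this changes none of the four Frobenius norms in the statement.

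Next I would expand every term via $\|X\|_F^2=\tr(X^*X)$, using $\tr|A|^2=\tr(A^*A)=\|A\|_F^2=\tr(AA^*)=\tr|A^*|^2$ (and likewise for $B$) together with the cyclicity of the trace. A direct computation shows that the claimed bound
\[
\|\,|A|-|B|\,\|_F^2+\|\,|A^*|-|B^*|\,\|_F^2\le 2\|A-B\|_F^2
\]
is equivalent to
\[
2\,\mathrm{Re}\,\tr(B^*A)\le \tr(|A||B|)+\tr(|A^*||B^*|).
\]
Substituting the polar factorizations and writing $W:=V^*U$, $P:=|A|\ge 0$, $Q:=|B|\ge 0$, the right-hand side equals $\tr(PQ)+\tr(PW^*QW)$ while the left-hand side equals $2\,\mathrm{Re}\,\tr(PQW)$, so the whole theorem reduces to
\[
2\,\mathrm{Re}\,\tr(PQW)\le \tr(PQ)+\tr(PW^*QW).
\]

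To prove this last inequality I would introduce $S_1:=Q^{1/2}P^{1/2}$ and $S_2:=Q^{1/2}WP^{1/2}$. Cyclicity of the trace gives $\|S_1\|_F^2=\tr(PQ)$, $\|S_2\|_F^2=\tr(PW^*QW)$, and $\langle S_1,S_2\rangle=\tr(S_1^*S_2)=\tr(P^{1/2}QWP^{1/2})=\tr(PQW)$. Hence Cauchy--Schwarz yields $|\tr(PQW)|\le\|S_1\|_F\|S_2\|_F=\bigl(\tr(PQ)\,\tr(PW^*QW)\bigr)^{1/2}$, and the scalar arithmetic--geometric mean inequality $2\sqrt{xy}\le x+y$ finishes the argument.

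The argument is almost entirely bookkeeping; the two steps that require any care are the identity $|A^*|=U|A|U^*$ for a merely subunitary $U$ (settled by the projection remark above) and, above all, the choice of the pair $S_1,S_2$ so that the two factors produced by Cauchy--Schwarz are \emph{exactly} the two terms on the right-hand side of the target inequality. That the split $Q^{1/2}(P^{1/2})$ versus $Q^{1/2}(WP^{1/2})$ achieves this is the one genuine idea; everything downstream is routine.
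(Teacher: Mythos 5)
Your proposal is correct: the reduction to $2\,\Re\,\tr(B^*A)\le \tr(|A||B|)+\tr(|A^*||B^*|)$ checks out, the identity $|A^*|=U|A|U^*$ is justified properly in the rank-deficient case via $(U^*U)|A|=|A|$, and the choice $S_1=Q^{1/2}P^{1/2}$, $S_2=Q^{1/2}WP^{1/2}$ makes Cauchy--Schwarz plus the scalar AM--GM inequality deliver exactly $2\,\Re\,\tr(PQW)\le\tr(PQ)+\tr(PW^*QW)$ (note your argument never even needs $W=V^*U$ to be a contraction). However, your route is genuinely different from the paper's. The paper does not attack Theorem \ref{thmkit} directly: it first proves the normal-matrix case (Corollary \ref{kitcor}) by writing the spectral decompositions $A=U\Lambda U^*$, $B=\widehat U\widehat\Lambda\widehat U^*$, setting $\widehat S=\widehat U^*U=(s_{ij})$, and observing that
\[
\frac{\|\,|A|-|B|\,\|_F^2}{\|A-B\|_F^2}
=\frac{\widehat F_{r,s}-2\sum_{i,j}|\widehat\lambda_i||\lambda_j|\,|s_{ij}|^2}{\widehat F_{r,s}-2\sum_{i,j}\Re(\widehat\lambda_i\overline{\lambda_j})\,|s_{ij}|^2}\le 1
\]
termwise, and then obtains the rectangular statement by applying this to the Hermitian dilations $\mathcal A=\left(\begin{smallmatrix}0&A\\ A^*&0\end{smallmatrix}\right)$, $\mathcal B=\left(\begin{smallmatrix}0&B\\ B^*&0\end{smallmatrix}\right)$, for which $\|\,|\mathcal A|-|\mathcal B|\,\|_F^2=\|\,|A|-|B|\,\|_F^2+\|\,|A^*|-|B^*|\,\|_F^2$ and $\|\mathcal A-\mathcal B\|_F^2=2\|A-B\|_F^2$. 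Your proof buys directness and self-containment: it handles rectangular $A,B$ in one stroke via the polar factors and a single Frobenius Cauchy--Schwarz step, with no dilation and no spectral theorem for normal matrices. The paper's entrywise spectral formulation is less economical for this one inequality, but it is the formulation that plugs into its quasi-convexity/extreme-point machinery and yields the sharp companion bounds (Theorems \ref{thmlast} and \ref{thmlast1}), which your trace argument does not produce.
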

Specially, when $A, B$ are normal, we have
\begin{cor}[Kittaneh]\label{kitcor}
Let $A,B\in \mathbb{C}^{n\times n}$ be two normal matrices. Then
\[
\|\left|A\right|-\left|B\right|\|_F\le \|A-B\|_F.
\]
\end{cor}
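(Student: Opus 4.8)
The plan is to obtain Corollary \ref{kitcor} as an immediate consequence of Theorem \ref{thmkit}, using only the defining property of normal matrices. First I would record the elementary observation that a normal matrix $A$ satisfies $A^*A = AA^*$, so that $|A|^2 = A^*A = AA^* = |A^*|^2$. Since $|A|$ and $|A^*|$ are both positive semidefinite and the positive semidefinite square root of a positive semidefinite matrix is unique, this forces $|A| = |A^*|$. Applying the same reasoning to $B$ gives $|B| = |B^*|$.

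With these two identities in hand, the two summands on the left-hand side of the inequality in Theorem \ref{thmkit} coincide, namely $\||A| - |B|\|_F^2 = \||A^*| - |B^*|\|_F^2$. Hence Theorem \ref{thmkit} specializes to $2\||A| - |B|\|_F^2 \le 2\|A - B\|_F^2$, and dividing by $2$ and taking square roots yields the claimed bound $\||A| - |B|\|_F \le \|A - B\|_F$.

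There is essentially no serious obstacle: beyond Theorem \ref{thmkit}, the only ingredient is the uniqueness of positive semidefinite square roots, which is standard. The one point to be careful about is that normality is invoked only through $A^*A = AA^*$ (and likewise for $B$), with no relation between $A$ and $B$ required; this keeps the argument fully general. One could alternatively try to prove the inequality directly within the class of normal matrices without appealing to Kittaneh's theorem, but routing it through Theorem \ref{thmkit} is by far the shortest path and is the approach I would take.
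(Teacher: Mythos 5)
Your proof is correct, and the reduction is sound: for normal $A$ one has $|A|^2=A^*A=AA^*=|A^*|^2$, uniqueness of the positive semidefinite square root gives $|A|=|A^*|$ (likewise for $B$), and Theorem \ref{thmkit} then collapses to the claimed inequality. This is in fact the derivation the paper implicitly invokes by labelling the statement a corollary of Kittaneh's theorem. However, the proof the paper actually writes out in Section \ref{s3} is different and self-contained: it takes spectral decompositions $A=U\Lambda U^*$, $B=\widehat{U}\widehat{\Lambda}\widehat{U}^*$, sets $\widehat{S}=\widehat{U}^*U=(s_{ij})$, and computes
\[
\frac{\|\,|A|-|B|\,\|_F^2}{\|A-B\|_F^2}
=\frac{\widehat{F}_{r,s}-2\sum_{i,j}|\widehat{\lambda}_i||\lambda_j|\,|s_{ij}|^2}{\widehat{F}_{r,s}-2\sum_{i,j}\Re\bigl(\widehat{\lambda}_i\overline{\lambda_j}\bigr)|s_{ij}|^2}\le 1,
\]
the last step using $\Re\bigl(\widehat{\lambda}_i\overline{\lambda_j}\bigr)\le|\widehat{\lambda}_i||\lambda_j|$. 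Your route is shorter but treats Theorem \ref{thmkit} as a black box; the paper's direct computation is what enables the sharp lower bound of Theorem \ref{thmlast} and the refinement of Theorem \ref{thmlast1}, since the same ratio is then optimized over the extreme points of the constraint set via Lemmas \ref{lemmax} and \ref{lemext}. Both arguments are valid; yours simply does not yield the quantitative refinements that are the point of that section.
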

We remark that replacing $A, B$ by $\mathcal{A}=\begin{pmatrix}
	0&A\\
	A^*&0
\end{pmatrix}, \mathcal{B}=\begin{pmatrix}
	0&B\\
	B^*&0
\end{pmatrix}$ in Corollary \ref{kitcor} yields Theorem \ref{thmkit}. A new proof of Corollary \ref{kitcor} will be given in Section \ref{s3}. 

Let $\{\lambda_j\}_1^r$ and $ \{\widehat{\lambda}_j\}_1^s$ denote the sets of non-zero eigenvalues  of $A$ and $ B$. We use $S_k([r])$  to denote the set of ordered arrangements of $k$ distinct elements selected from the sets $\{1,...,r\}$. A sharp lower bound of Corollary \ref{kitcor} will be given by the following.
\begin{thm}\label{thmlast}
	Let $r\le s$ and $A\in \mathbb{C}_r^{n\times n},B\in \mathbb{C}_s^{n\times n}$ be two normal matrices. Then
	\[
	\|\left|A\right|-\left|B\right|\|_F\ge \min\limits_{1\le k\le r, (i_1\cdots i_k)\in S_k([s]),(j_1\cdots j_k)\in S_k([r]) }\sqrt{\dfrac{\widehat{F}_{r,s}-2\sum_{t=1}^k|\widehat{\lambda}_{i_t}||\lambda_{j_t}|}{\widehat{F}_{r,s}-2\sum_{t=1}^k\Re\left( \widehat{\lambda}_{i_t}\overline{\lambda_{j_t}}\right)}}\|A-B\|_F,
	\]
	where  $\widehat{F}_{r,s}:=\sum_{j=1}^r\left|\lambda_j\right|^2+\sum_{j=1}^s|\widehat{\lambda}_j|^2$ and  the coefficient is optimal.
\end{thm}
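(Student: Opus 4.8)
The plan is to pass to spectral decompositions, reduce the problem to a linear optimization over doubly stochastic matrices, and solve the latter by passing to permutation matrices. Since $A$ and $B$ are normal, write $A = U\Lambda U^{*}$ and $B = VMV^{*}$ with $U, V$ unitary, $\Lambda = \diag(\lambda_{1},\ldots,\lambda_{r},0,\ldots,0)$ and $M = \diag(\widehat\lambda_{1},\ldots,\widehat\lambda_{s},0,\ldots,0)$; then $|A| = U|\Lambda|U^{*}$ and $|B| = V|M|V^{*}$. Putting $W = U^{*}V$ and $p_{jk} = |W_{jk}|^{2}$ — so that $(p_{jk})$ is an $n\times n$ doubly stochastic matrix — unitary invariance of $\|\cdot\|_{F}$ together with the identity $\|X-Y\|_{F}^{2} = \|X\|_{F}^{2}+\|Y\|_{F}^{2}-2\Re\tr(X^{*}Y)$ yields
\begin{align*}
\|A-B\|_{F}^{2} &= \widehat F_{r,s} - 2\sum_{j,k}\Re\bigl(\widehat\lambda_{k}\overline{\lambda_{j}}\bigr)p_{jk} =: \widehat F_{r,s}-2R, \\
\bigl\||A|-|B|\bigr\|_{F}^{2} &= \widehat F_{r,s} - 2\sum_{j,k}|\lambda_{j}|\,|\widehat\lambda_{k}|\,p_{jk} =: \widehat F_{r,s}-2P,
\end{align*}
with the convention $\lambda_{j}=0$ for $j>r$ and $\widehat\lambda_{k}=0$ for $k>s$.

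Let $m$ denote the minimum appearing on the right-hand side of the asserted inequality. From $|\widehat\lambda_{i_{t}}||\lambda_{j_{t}}| \ge \Re(\widehat\lambda_{i_{t}}\overline{\lambda_{j_{t}}})$ and from $2\sum_{t=1}^{k}|\widehat\lambda_{i_{t}}||\lambda_{j_{t}}| \le \widehat F_{r,s}$ (valid because the selected index sets lie in $\{1,\ldots,r\}$ and $\{1,\ldots,s\}$) one gets $0\le m\le 1$. Discarding the trivial case $A=B$, the claim is equivalent to $\widehat F_{r,s}(1-m) \ge 2(P-mR)$, i.e.\ to
\[
2\sum_{j,k} c_{jk}\,p_{jk} \le \widehat F_{r,s}(1-m), \qquad c_{jk} := |\lambda_{j}||\widehat\lambda_{k}| - m\,\Re\bigl(\widehat\lambda_{k}\overline{\lambda_{j}}\bigr).
\]
Because $m\le 1$ we have $c_{jk} \ge (1-m)|\lambda_{j}||\widehat\lambda_{k}| \ge 0$, so the left-hand side is a linear functional of $(p_{jk})$ over the Birkhoff polytope and is therefore maximized at a permutation matrix. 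For a permutation $\pi$, only the indices $j\le r$ with $\pi(j)\le s$ contribute; writing these as $(j_{1}\cdots j_{k})\in S_{k}([r])$ with images $(i_{1}\cdots i_{k})\in S_{k}([s])$, the inequality reduces to $2\sum_{t=1}^{k}c_{j_{t}i_{t}} \le \widehat F_{r,s}(1-m)$. Using that $\widehat F_{r,s} - 2\sum_{t}\Re(\widehat\lambda_{i_{t}}\overline{\lambda_{j_{t}}}) \ge \widehat F_{r,s} - 2\sum_{t}|\widehat\lambda_{i_{t}}||\lambda_{j_{t}}| \ge 0$, this is precisely $m \ge (\widehat F_{r,s}-2\sum_{t}|\widehat\lambda_{i_{t}}||\lambda_{j_{t}}|)/(\widehat F_{r,s}-2\sum_{t}\Re(\widehat\lambda_{i_{t}}\overline{\lambda_{j_{t}}}))$ (the case $k=0$ giving only the already-known $m\le 1$, and the degenerate case of a vanishing denominator being handled by direct substitution, where the permutation-level inequality becomes an equality), which holds by the very definition of $m$ as a minimum.

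For the optimality of the coefficient, given a minimizing triple $(k^{*},(i_{t}^{*}),(j_{t}^{*}))$ I will take $A$ and $B$ to be diagonal matrices in a common basis (with $n$ chosen large enough, which is permissible since only the ranks are prescribed) arranged so that $\lambda_{j_{t}^{*}}$ sits opposite $\widehat\lambda_{i_{t}^{*}}$ for $t=1,\ldots,k^{*}$, the remaining eigenvalues of $A$ sit opposite zeros of $B$, and vice versa; then $(p_{jk})$ is a permutation matrix realizing the minimizing selection, and a short computation gives equality. The only genuinely delicate part is the bookkeeping in the reduction from doubly stochastic matrices to permutations and then to the index sets $S_{k}([r]), S_{k}([s])$, along with keeping every inequality pointing the right way — the statement being a lower bound phrased through a minimum; the remaining ingredients (Birkhoff--von Neumann and the scalar estimate $|z|\,|w| \ge \Re(z\overline{w})$) are entirely elementary.
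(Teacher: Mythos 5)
Your argument is correct in substance, and while it follows the same skeleton as the paper (spectral decompositions, passage to the squared moduli of the overlap unitary, reduction to extremal configurations), the decisive step is carried out differently. The paper keeps the ratio $\||A|-|B|\|_F^2/\|A-B\|_F^2$ intact, views it as a linear-fractional (hence quasi-concave) function of the variables $|s_{ij}|^2$ restricted to the $s\times r$ block, and invokes Lemma \ref{lemlf}, Theorem \ref{lemmax} and the extreme-point description of Lemma \ref{lemext} to push the minimum onto subpermutation patterns. You instead fix the candidate constant $m$, rewrite the desired bound as one affine inequality $2\sum_{j,k}c_{jk}p_{jk}\le \widehat{F}_{r,s}(1-m)$ over the full $n\times n$ Birkhoff polytope of $(|W_{jk}|^2)$, and check it only at permutation matrices via Birkhoff--von Neumann, where (after discarding the zero rows and columns of $(c_{jk})$) it collapses to the defining property of $m$. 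This linearization avoids the quasi-convexity machinery entirely and is an acceptable, arguably more elementary, alternative; its price is the extra bookkeeping ($0\le m\le1$, the $k=0$ permutations, the vanishing-denominator selections), all of which you address.

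Two caveats. First, the displayed permutation-level inequality should read $m\le\bigl(\widehat{F}_{r,s}-2\sum_{t}|\widehat{\lambda}_{i_t}||\lambda_{j_t}|\bigr)/\bigl(\widehat{F}_{r,s}-2\sum_{t}\Re(\widehat{\lambda}_{i_t}\overline{\lambda_{j_t}})\bigr)$, not ``$m\ge$'': unwinding $2\sum_t c_{j_ti_t}\le\widehat{F}_{r,s}(1-m)$ gives $m\bigl(\widehat{F}_{r,s}-2\sum_t\Re(\cdot)\bigr)\le\widehat{F}_{r,s}-2\sum_t|\cdot||\cdot|$, which is exactly what the definition of $m$ as a minimum supplies; since your written justification matches this direction, I read the ``$\ge$'' as a typo rather than a gap, but it must be fixed. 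Second, your equality construction uses $r+s-k^\star$ diagonal slots, so ``choosing $n$ large enough'' quietly changes the ambient dimension fixed in the statement. This can be repaired without enlarging $n$: adjoining one more pair to any selection decreases the numerator by $2|\lambda_j||\widehat{\lambda}_i|$ and the denominator by only $2\Re(\widehat{\lambda}_i\overline{\lambda_j})\le 2|\lambda_j||\widehat{\lambda}_i|$, and the numerator never exceeds the denominator, so the ratio does not increase; hence the minimum is always attained at some selection with $k=r$, which is realizable for the given $n\ge s$ by a pair of matrices that are diagonal up to a permutation of the basis. (The paper itself asserts optimality without exhibiting attaining matrices, so your sharpness discussion is no less complete than its.)
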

 In particular, when one of $A$ or $B$ is a full rank matrix, Corollary \ref{kitcor} can be strengthened.
\begin{thm} \label{thmlast1}
	Let  $A\in \mathbb{C}_r^{n\times n},B\in \mathbb{C}_n^{n\times n}$ be two normal matrices.  Then
	\[
	\|\left|A\right|-\left|B\right|\|_F\le \sqrt{\max\limits_{\sigma\in S_r([n])} \dfrac{\widehat{F}_{r,n}-2\sum_{j=1}^r|\widehat{\lambda}_{\sigma(j)}||\lambda_{j}|}{\widehat{F}_{r,n}-2\sum_{j=1}^r\Re(\widehat{\lambda}_{\sigma(j)}\overline{{\lambda}_{j}})}}\|A-B\|_F.
	\]
		where  $\widehat{F}_{r,n}:=\sum_{j=1}^r\left|\lambda_j\right|^2+\sum_{j=1}^n|\widehat{\lambda}_j|^2$ and  the coefficient is optimal.
\end{thm}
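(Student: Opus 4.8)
The plan is to convert the inequality into a linear‑fractional optimization over a transportation polytope and then run a Birkhoff–von Neumann type vertex analysis, in the same spirit as Theorem \ref{thmlast}, but exploiting the full‑rank hypothesis on $B$ to shrink the feasible region to exactly the right shape.

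First I would use normality. Write $A=U\Lambda U^*$ and $B=VMV^*$ with $U,V$ unitary, $\Lambda=\diag(\lambda_1,\dots,\lambda_r,0,\dots,0)$, $M=\diag(\widehat\lambda_1,\dots,\widehat\lambda_n)$, so that $|A|=U|\Lambda|U^*$ and $|B|=V|M|V^*$. Expanding Frobenius norms gives
\[
\||A|-|B|\|_F^2=\widehat F_{r,n}-2\tr(|A||B|),\qquad \|A-B\|_F^2=\widehat F_{r,n}-2\Re\tr(A^*B).
\]
Setting $W=U^*V$ (unitary) and $p_{ik}=|w_{ik}|^2$, a direct computation yields $\tr(|A||B|)=\sum_{i=1}^r\sum_{k=1}^n|\lambda_i||\widehat\lambda_k|\,p_{ik}$ and $\Re\tr(A^*B)=\sum_{i=1}^r\sum_{k=1}^n\Re(\widehat\lambda_k\overline{\lambda_i})\,p_{ik}$: the rows $i>r$ disappear because the corresponding $\lambda_i$ vanish, while \emph{every} column survives precisely because $B$ has full rank. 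Thus the target ratio equals $R(\tilde P)$, where $\tilde P=(p_{ik})_{1\le i\le r,\,1\le k\le n}$ and
\[
R(P):=\frac{\widehat F_{r,n}-2\sum_{i,k}|\lambda_i||\widehat\lambda_k|p_{ik}}{\widehat F_{r,n}-2\sum_{i,k}\Re(\widehat\lambda_k\overline{\lambda_i})p_{ik}}.
\]

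Next, since $W$ is unitary and $r\le n$, the matrix $\tilde P$ has nonnegative entries, every row sum equal to $1$, and every column sum at most $1$; denote this polytope by $\Omega$. The point where the full‑rank hypothesis does its work is that the row sums are \emph{exactly} $1$: this is what turns the doubly‑substochastic feasible region of Theorem \ref{thmlast} (whose vertices are partial matchings of every size $k$, producing the $\min$‑over‑$k$ form there) into a row‑stochastic one. One checks that $\Omega$ is the coordinate projection of the $n\times n$ Birkhoff polytope onto its first $r$ rows — any $P\in\Omega$ extends to a doubly stochastic matrix by solving the (always feasible) transportation problem that redistributes the column slacks among $n-r$ new rows — so the vertices of $\Omega$ are the $0$–$1$ matrices having exactly one $1$ in each row and at most one $1$ in each column, i.e. the injections $\sigma\in S_r([n])$, at which $R$ takes the value $\bigl(\widehat F_{r,n}-2\sum_{j}|\widehat\lambda_{\sigma(j)}||\lambda_j|\bigr)/\bigl(\widehat F_{r,n}-2\sum_j\Re(\widehat\lambda_{\sigma(j)}\overline{\lambda_j})\bigr)$. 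Both numerator and denominator of $R$ are affine in $P$, and the denominator is nonnegative on $\Omega$ (from $\Re(\widehat\lambda_k\overline{\lambda_i})\le|\widehat\lambda_k||\lambda_i|\le\tfrac12(|\widehat\lambda_k|^2+|\lambda_i|^2)$ combined with the row/column sum constraints). Writing $\tilde P=\sum_\sigma\alpha_\sigma\,\sigma$ as a convex combination of vertices and applying the mediant inequality — a vertex with vanishing denominator also has vanishing numerator, hence is harmless — gives $R(\tilde P)\le\max_{\sigma\in S_r([n])}R(\sigma)$, which is exactly the claimed coefficient. For optimality, fix a maximizing injection $\sigma^*$, extend it to a permutation $\tau$ of $[n]$, and take $A=\diag(\lambda_1,\dots,\lambda_r,0,\dots,0)$ and $B=\diag(\widehat\lambda_{\tau(1)},\dots,\widehat\lambda_{\tau(n)})$; then $W=I$, $\tilde P$ is the vertex $\sigma^*$, and equality holds throughout (in the non‑trivial case $A\ne B$).

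I expect the computational heart to be the first step, but the conceptual crux is the vertex description of $\Omega$: one must see clearly that full rank of $B$ is exactly what promotes "at most one $1$ per row" to "exactly one $1$ per row", eliminating the size parameter $k$ and, crucially, the trivial vertex that would otherwise force the coefficient back up to $1$. A secondary point requiring care is the handling of degenerate vertices in the fractional optimization, where numerator and denominator vanish simultaneously — these arise only when $r=n$ and $A,B$ share eigenvalues — so that the mediant argument is invoked legitimately; since the relevant $\tilde P$ itself has strictly positive denominator $\|A-B\|_F^2$, this is a purely bookkeeping issue.
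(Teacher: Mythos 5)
Your proposal is correct and follows essentially the same route as the paper: diagonalize the two normal matrices, write both $\||A|-|B|\|_F^2$ and $\|A-B\|_F^2$ as affine functions of the variables $|w_{ik}|^2$ coming from the unitary $W=U^*V$, observe that full rank of $B$ makes the relevant row (resp.\ column) sums exactly one, and push the linear--fractional ratio to the injection vertices indexed by $\sigma\in S_r([n])$. Your Birkhoff-projection description of the vertices and the mediant inequality are just a repackaging of the paper's quasi-convexity and extreme-point machinery (Lemma \ref{lemlf}, Theorem \ref{lemmax}, Lemma \ref{lemext}), and your explicit diagonal pair $A=\diag(\lambda_1,\dots,\lambda_r,0,\dots,0)$, $B=\diag(\widehat{\lambda}_{\tau(1)},\dots,\widehat{\lambda}_{\tau(n)})$ supplies the sharpness witness that the paper only asserts.
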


This paper is organized as follows. In Section \ref{s2}, we introduce some preliminary lemmas (to be used in the sequel), most of which are drawn from convex analysis. In Section \ref{s3}, we prove our main results.
\section{Basic Lemmas}\label{s2}
First, we introduce the definitions of quasi-convex  functions , quasi-concave  functions \cite[p. 95]{BV04} and extreme points in convex analysis.
\begin{defin}[quasi-convex and quasi-concave functions] 
	Let \(C\subset\mathbb{R}^n\) be a convex set. A function \(f\colon C\to\mathbb{R}\) is called quasi-convex if for every \(\alpha\in\mathbb{R}\) its lower level set
	\[
	L(\alpha)=\{\,x\in C: f(x)\le\alpha\}
	\]
	is convex. A function \(f\colon C\to\mathbb{R}\) is called quasi-concave if $-f$ is quasi-convex.
\end{defin} 
\begin{defin}[extreme points]
	Let \( C \subset \mathbb{R}^n \) be a convex set. A point \( x \in C \) is called an extreme point of \( C \) if the following condition holds:
	\[
	x = \lambda y + (1 - \lambda) z,\quad \text{with } y, z \in C,\ \lambda \in (0,1) \Rightarrow y = z = x.
	\]
	That is, \( x \) cannot be expressed as a nontrivial convex combination of two distinct points in \( C \).
\end{defin}

A doubly substochastic matrix is a square nonnegative matrix with each row and column sum at most 1 (see \cite[p. 334]{Zha11}). A square (0,1)-matrix is called a sub-permutation matrix if each row and each column contains at most one 1 (see \cite[p. 338, Problem 9]{Zha11}). The Birkhoff-type theorem for doubly substochastic matrices (stated in \cite[p. 338, Problem 9]{Zha11}) is as follows.
\begin{lem}\label{birk}
	A matrix is doubly substochastic if and only if it is a convex combination of finite
	subpermutation matrices.
\end{lem}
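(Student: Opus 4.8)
\textbf{Proof plan for Lemma \ref{birk}.}

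The plan is to prove both directions of the equivalence, the easy one first. For the ``if'' direction: every subpermutation matrix is trivially doubly substochastic (each row and column has at most one $1$ and no other nonzero entries, so the row and column sums are $0$ or $1$), and the set of doubly substochastic matrices is convex (row sums and column sums are linear functionals, and ``$\le 1$'' together with ``$\ge 0$'' entrywise defines an intersection of half-spaces). Hence any convex combination of subpermutation matrices is doubly substochastic.

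For the ``only if'' direction, the main idea is a dilation/embedding trick that reduces the substochastic case to Birkhoff's classical theorem for doubly stochastic matrices. Given an $n \times n$ doubly substochastic matrix $S$, I will construct a $2n \times 2n$ doubly stochastic matrix $D$ containing $S$ as its leading principal block, namely
\[
D = \begin{pmatrix} S & S' \\ S'' & S''' \end{pmatrix},
\]
where the deficiency vectors $r = \mathbf{1} - S\mathbf{1}$ (row deficiencies) and $c = \mathbf{1} - S^{T}\mathbf{1}$ (column deficiencies) are nonnegative and satisfy $\mathbf{1}^{T} r = \mathbf{1}^{T} c$ (both equal $n - \sum_{i,j} S_{ij}$); one then fills in $S'$, $S''$, $S'''$ so that every row and column sum of $D$ is exactly $1$ (for instance $S'$ has row sums $r$ and some column sums, $S''$ has column sums $c$, and $S'''$ absorbs the rest — a small bookkeeping construction, e.g. $S' = \frac{1}{n-\sum S_{ij}} r c^{T}$ scaled appropriately, or a more elementary greedy fill). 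Applying Birkhoff's theorem, $D = \sum_{\ell} \lambda_{\ell} P_{\ell}$ with $\lambda_{\ell} > 0$, $\sum \lambda_{\ell} = 1$, and each $P_{\ell}$ a $2n \times 2n$ permutation matrix. Restricting each $P_{\ell}$ to its leading $n \times n$ principal submatrix yields a $(0,1)$-matrix with at most one $1$ per row and per column, i.e. a subpermutation matrix $\widehat{P}_{\ell}$; and since the restriction map is linear, $S = \sum_{\ell} \lambda_{\ell} \widehat{P}_{\ell}$, a convex combination of finitely many subpermutation matrices.

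I expect the main obstacle to be the explicit construction of the completion $D$ and the verification that it is genuinely doubly stochastic with nonnegative entries; one must handle the degenerate case $\sum_{i,j} S_{ij} = n$ (where $S$ is already doubly stochastic and the completion block $S'$, $S''$ are zero) separately, and check that the chosen fill of $S'$ (rank-one with prescribed margins $r$ and $c$) is nonnegative and has the correct column sums so that $S'''$ can also be completed nonnegatively. An alternative that sidesteps this is a direct induction on the number of nonzero entries of $S$: if $S$ is already a subpermutation matrix we are done; otherwise a Hall-type marriage argument produces a subpermutation matrix $P$ whose support is contained in the support of $S$ and which covers every row and column where $S$ has a full sum, so that $S - \varepsilon P$ is again doubly substochastic with strictly fewer nonzeros for the largest admissible $\varepsilon$, and $S = \varepsilon P + (1-\varepsilon)\bigl(\tfrac{1}{1-\varepsilon}(S - \varepsilon P)\bigr)$ completes the induction. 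Either route is standard; I would present the dilation argument as the cleaner one, citing Birkhoff's theorem for doubly stochastic matrices as the engine.
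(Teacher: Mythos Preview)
The paper does not actually prove this lemma; it simply states it with a citation to \cite[p.~338, Problem 9]{Zha11} as a known Birkhoff-type result, and then uses it as a black box in the proof of Lemma~\ref{lemext}. So there is no ``paper's own proof'' to compare against.

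Your proof plan is correct and is the standard argument. The only wrinkle you flag --- constructing the doubly stochastic dilation $D$ explicitly --- is more easily resolved than your rank-one suggestion: take
\[
D=\begin{pmatrix} S & \diag(r)\\ \diag(c) & S^{T}\end{pmatrix},
\]
where $r=\mathbf{1}-S\mathbf{1}$ and $c=\mathbf{1}-S^{T}\mathbf{1}$ are the row- and column-deficiency vectors. A direct check shows every row and column of $D$ sums to $1$ and all entries are nonnegative, with no degenerate case to treat separately. Birkhoff's theorem then writes $D$ as a convex combination of $2n\times 2n$ permutation matrices, and restricting each to its leading $n\times n$ block gives the desired subpermutation decomposition of $S$, exactly as you outline. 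Your alternative inductive/Hall-type argument also works and is the other textbook route.
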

The following lemma characterizes the extreme points of the set studied in this paper.
\begin{lem}\label{lemext}
		Let $r\le s$ and $\mathbb{R}_+^{s \times r}$ denote the set of all $s\times r$ nonnegative matrices (where all entries are nonnegative). Define   the set \(\mathcal{C}\) by
		\[
		\mathcal{C} = \left\{ (y_{ij}) \in \mathbb{R}_+^{s \times r} \mid \sum_{i=1}^s y_{ij} \leq 1 \text{ for each } 1 \leq j \leq r, \ \sum_{j=1}^r y_{ij} \leq 1 \text{ for each } 1 \leq i \leq s \right\}.
		\]
		An element \(y \in \mathcal{C}\) is an extreme point of \(\mathcal{C}\) if and only if \(y\) is either the zero matrix or a sum of \(k\) pairwise row- and column-disjoint unit matrices, where \(1 \leq k \leq r\). Formally, the set of extreme points of \(\mathcal{C}\) is
		\[
		\text{ext}(\mathcal{C}) = \{0\} \cup \left\{ \sum_{t=1}^k E_{i_t j_t} \mid 1 \leq i_1  < \cdots < i_k \leq s, \ 1 \leq j_1  < \cdots < j_k \leq r, \ 1 \leq k \leq r \right\},
		\]
		where \(E_{ij} \in \mathbb{R}^{s \times r}\) denotes the unit matrix with a 1 in the \((i,j)\)-position and 0 elsewhere.
	\end{lem}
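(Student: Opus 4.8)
The plan is to prove the two inclusions separately: first that every matrix of the stated form is an extreme point of $\mathcal{C}$, and then that $\mathcal{C}$ has no other extreme points.

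The easy inclusion is a rigidity argument. Suppose $y=\sum_{t=1}^k E_{i_tj_t}$ with $i_1<\cdots<i_k$, $j_1<\cdots<j_k$ (or $y=0$), and write $y=\lambda u+(1-\lambda)v$ with $u,v\in\mathcal{C}$ and $\lambda\in(0,1)$. At every position $(p,q)$ where $y_{pq}=0$, nonnegativity of $u,v$ forces $u_{pq}=v_{pq}=0$. At each position $(i_t,j_t)$ we have $1=\lambda u_{i_tj_t}+(1-\lambda)v_{i_tj_t}$, while each row sum of $u$ and of $v$ is at most $1$, so $u_{i_tj_t},v_{i_tj_t}\le 1$; a strict inequality at either entry would make the convex combination strictly below $1$, hence $u_{i_tj_t}=v_{i_tj_t}=1$. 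Therefore $u=v=y$ and $y$ is extreme. (The count $1\le k\le r$ is automatic, since a row- and column-disjoint family in an $s\times r$ array has at most $\min(s,r)=r$ members.)

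For the reverse inclusion I would reduce to the square case so as to apply Lemma \ref{birk}. Pad each $y\in\mathcal{C}$ by $s-r$ zero columns on the right to obtain $\bar y\in\mathbb{R}_+^{s\times s}$; then all row and column sums of $\bar y$ are at most $1$, i.e. $\bar y$ is doubly substochastic, and $y\mapsto\bar y$ is an affine bijection from $\mathcal{C}$ onto the set $\mathcal{C}'$ of $s\times s$ doubly substochastic matrices whose columns $j>r$ vanish. The key point is that $\mathcal{C}'$ is a face of the set of doubly substochastic matrices: if such a matrix is a nontrivial convex combination of two doubly substochastic matrices, nonnegativity forces both summands to have vanishing columns $j>r$ as well. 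An affine bijection onto a face carries extreme points to extreme points, so an extreme point of $\mathcal{C}$ corresponds to an extreme point of the doubly substochastic polytope lying in $\mathcal{C}'$. By Lemma \ref{birk} any such extreme point is a convex combination of finitely many subpermutation matrices, and extremality forces it to equal one of them; a subpermutation matrix supported in the first $r$ columns is precisely $0$ or $\sum_{t=1}^k E_{i_tj_t}$ of the stated form with $k\le r$. Translating back through the bijection yields the claimed description of $\mathrm{ext}(\mathcal{C})$.

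The step requiring the most care is the passage from $\mathcal{C}$ to the square doubly substochastic polytope: one must verify that $\mathcal{C}'$ is genuinely a face (so that extreme points are preserved in both directions) and be careful to invoke the \emph{substochastic} Birkhoff-type theorem, Lemma \ref{birk}, rather than the classical Birkhoff theorem, since the defining constraints of $\mathcal{C}$ are inequalities. As an alternative that avoids Lemma \ref{birk} entirely, one can argue directly: given an extreme point with a fractional entry, form the bipartite graph on rows and columns whose edges are the fractional entries, and perturb $y$ by $\pm\varepsilon$ alternately along a cycle (if one exists) or along a path joining two leaves. The only subtlety there is that a degree-one vertex of this graph must correspond to a row or column whose sum is strictly less than $1$ — which is exactly what makes the perturbation remain in $\mathcal{C}$ — contradicting extremality; this shows directly that every extreme point is $(0,1)$-valued and row/column disjoint.
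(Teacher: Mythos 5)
Your proof is correct and follows essentially the same route as the paper: pad $Y$ with $s-r$ zero columns and invoke the substochastic Birkhoff-type theorem (Lemma \ref{birk}). The paper's own proof is exactly this two-line reduction, so your additional verifications --- the rigidity argument showing each subpermutation-type matrix is genuinely extreme, and the observation that the padded set is a face of the doubly substochastic polytope so that extreme points correspond in both directions --- merely supply details the paper leaves implicit, and they are correct.
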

	\begin{proof}	Augmenting the matrix $Y=(y_{ij})\in \mathbb{R}_+^{s \times r}$ horizontally with a zero matrix yields
$[Y, 0] \in \mathbb{R}_+^{s \times s}$. By Lemma \ref{birk}, the extreme points of $\{[Y, 0]: Y\in \mathcal{C}\}$ are  all	sub-permutation matrices. Thus, the set of extreme points of \(\mathcal{C}\) is
\[
\text{ext}(\mathcal{C}) = \{0\} \cup \left\{ \sum_{t=1}^k E_{i_t j_t} \mid 1 \leq i_1  < \cdots < i_k \leq s, \ 1 \leq j_1  < \cdots < j_k \leq r, \ 1 \leq k \leq r \right\}.
\]
	\end{proof}

Analogous to convex functions, quasi-convex functions admit the following Jensen characterization (see {\cite[p. 98]{BV04}}).
\begin{lem}\label{JTC}
	Let \(C\subset\mathbb{R}^n\) be convex and \(f\colon C\to\mathbb{R}\). The following statements are equivalent:
	\begin{enumerate}
		\item \(f\) is quasi-convex.
		\item For all \(x,y\in C\) and all \(\lambda\in[0,1]\),
		\[
		f\bigl(\lambda x + (1-\lambda)y\bigr)\;\le\;\max\{f(x),f(y)\}.
		\]
		\item For any finite collection \(\{x_1,\ldots,x_s\}\subset C\) and weights \(\{\alpha_j\ge0: 1\le j\le s\}\) with \(\sum_{j=1}^s\alpha_j=1\),
		\[
		f\Bigl(\sum_{j=1}^s\alpha_j x_j\Bigr)\le\max_{1\le j\le s} f(x_j).
		\]
	\end{enumerate}
\end{lem}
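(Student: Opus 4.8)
\textbf{Proof proposal for Lemma \ref{JTC}.}

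The plan is to establish the cycle of implications $(1)\Rightarrow(2)\Rightarrow(3)\Rightarrow(1)$, the first and third being short and the middle one following by induction.

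First I would prove $(1)\Rightarrow(2)$. Assume $f$ is quasi-convex, fix $x,y\in C$ and $\lambda\in[0,1]$, and set $\alpha:=\max\{f(x),f(y)\}$. Then both $x$ and $y$ lie in the lower level set $L(\alpha)=\{z\in C:f(z)\le\alpha\}$, which is convex by hypothesis; hence $\lambda x+(1-\lambda)y\in L(\alpha)$, i.e. $f(\lambda x+(1-\lambda)y)\le\alpha=\max\{f(x),f(y)\}$. This is immediate once the level-set definition is unwound, so no obstacle here.

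Next I would prove $(2)\Rightarrow(3)$ by induction on $s$. The base case $s=1$ is trivial and $s=2$ is exactly $(2)$. For the inductive step, suppose the claim holds for $s-1$ points. Given $x_1,\dots,x_s\in C$ with weights $\alpha_j\ge0$, $\sum_{j=1}^s\alpha_j=1$, I may assume $\alpha_s<1$ (otherwise the statement is trivial), so $\beta:=\sum_{j=1}^{s-1}\alpha_j=1-\alpha_s>0$. Write
\[
\sum_{j=1}^s\alpha_j x_j=\beta\,z+\alpha_s x_s,\qquad z:=\sum_{j=1}^{s-1}\frac{\alpha_j}{\beta}x_j\in C,
\]
where membership $z\in C$ holds because $C$ is convex and the weights $\alpha_j/\beta$ sum to $1$. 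Applying $(2)$ to $z$ and $x_s$ with parameter $\beta$, then the inductive hypothesis to $z$, gives
\[
f\!\Bigl(\sum_{j=1}^s\alpha_j x_j\Bigr)\le\max\{f(z),f(x_s)\}\le\max\Bigl\{\max_{1\le j\le s-1}f(x_j),\,f(x_s)\Bigr\}=\max_{1\le j\le s}f(x_j),
\]
completing the induction. The one point needing care is the degenerate case $\alpha_s=1$ (and more generally the possibility that some $\alpha_j=0$), which I handle by simply discarding zero-weight points; I expect this bookkeeping — rather than any substantive difficulty — to be the main thing to get right.

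Finally, $(3)\Rightarrow(1)$: I need to show each lower level set $L(\alpha)$ is convex. Take $x,y\in L(\alpha)$ and $\lambda\in[0,1]$; applying $(3)$ with the two points $x,y$ and weights $\lambda,1-\lambda$ yields $f(\lambda x+(1-\lambda)y)\le\max\{f(x),f(y)\}\le\alpha$, so $\lambda x+(1-\lambda)y\in L(\alpha)$, proving convexity of $L(\alpha)$ for every $\alpha$ and hence quasi-convexity of $f$. Since all three implications are elementary, there is no real obstacle; the only care needed is the induction bookkeeping in $(2)\Rightarrow(3)$, and one may alternatively note that $(3)$ for $s=2$ is literally $(2)$, so the equivalence closes cleanly.
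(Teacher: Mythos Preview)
Your proof is correct and complete; the cycle $(1)\Rightarrow(2)\Rightarrow(3)\Rightarrow(1)$ is the standard argument, and the induction in $(2)\Rightarrow(3)$ with the degenerate-weight bookkeeping is handled properly. Note that the paper itself does not prove this lemma but simply cites it from Boyd--Vandenberghe \cite[p.~98]{BV04}, so there is no in-paper proof to compare against.
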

For continuous quasi-convex functions on nonempty compact convex sets, the following maximum theorem holds, straightforwardly deducible from Lemma \ref{JTC} and the well-known Weierstrass extreme value theorem \cite[Chapter 2]{Rud76}.
\begin{thm}[Maximum on Extreme Points]\label{lemmax}
Let \(C\subset\mathbb{R}^n\) be a nonempty compact convex set and let \(\operatorname{ext}(C)\) denote its extreme points. If \(f\colon C\to\mathbb{R}\) is  continuous and quasi-convex, then
\[
\max_{x\in C} f(x)
\;=\;
\max_{x\in\operatorname{ext}(C)} f(x).
\]
Similarly, if \(f\colon C\to\mathbb{R}\) is  continuous and quasi-concave, then
\[
\min_{x\in C} f(x)
\;=\;
\min_{x\in\operatorname{ext}(C)} f(x).
\]
\end{thm}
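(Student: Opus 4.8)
The plan is to deduce the statement by chaining together three classical facts: the Weierstrass extreme value theorem \cite[Chapter 2]{Rud76}, the finite-dimensional Krein--Milman (Minkowski) theorem expressing a point of a compact convex set as a convex combination of extreme points, and the Jensen-type inequality for quasi-convex functions recorded in Lemma \ref{JTC}(3).

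I would first treat the quasi-convex case. Since $C\subset\mathbb{R}^n$ is nonempty, compact and convex, $\operatorname{ext}(C)\neq\emptyset$ and, by Minkowski's theorem (the Carath\'eodory refinement of Krein--Milman valid in $\mathbb{R}^n$), every $x\in C$ admits a representation $x=\sum_{j=1}^{n+1}\alpha_j e_j$ with $e_j\in\operatorname{ext}(C)$, $\alpha_j\ge 0$ and $\sum_{j=1}^{n+1}\alpha_j=1$. Because $f$ is continuous and $C$ is compact, the Weierstrass theorem provides a maximizer $x^\ast\in C$ with $f(x^\ast)=\max_{x\in C}f(x)$. Writing $x^\ast$ in the above form and invoking Lemma \ref{JTC}(3) gives
\[
\max_{x\in C}f(x)=f(x^\ast)=f\!\left(\sum_{j=1}^{n+1}\alpha_j e_j\right)\le\max_{1\le j\le n+1}f(e_j).
\]
Each $e_j$ lies in $\operatorname{ext}(C)\subseteq C$, so the right-hand side is at most $\sup_{x\in\operatorname{ext}(C)}f(x)$, which in turn is at most $\max_{x\in C}f(x)$; hence equality holds throughout. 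In particular the supremum over $\operatorname{ext}(C)$ is attained at one of the finitely many $e_j$, so it is a genuine maximum and equals $\max_{x\in C}f(x)$.

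For the quasi-concave case I would simply apply the statement just proved to $-f$, which is continuous and quasi-convex by definition, while $\operatorname{ext}(C)$ is unchanged:
\[
\min_{x\in C}f(x)=-\max_{x\in C}\bigl(-f(x)\bigr)=-\max_{x\in\operatorname{ext}(C)}\bigl(-f(x)\bigr)=\min_{x\in\operatorname{ext}(C)}f(x).
\]

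The only point requiring more than a line is the appeal to the finite-dimensional Krein--Milman/Minkowski theorem to write an arbitrary point of $C$ as a finite convex combination of extreme points; one may either cite it or prove it by a short induction on $\dim C$ (a relative-boundary point lies in a proper face of lower dimension, an interior point lies on a chord whose two endpoints are relative-boundary points). Everything else is a routine chaining of inequalities, so I do not anticipate any genuine obstacle here.
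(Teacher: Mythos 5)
Your proof is correct and follows essentially the route the paper intends, which simply asserts the result is "straightforwardly deducible" from Lemma \ref{JTC} and the Weierstrass extreme value theorem. You usefully make explicit the one ingredient the paper leaves tacit—the finite-dimensional Krein--Milman/Minkowski representation of a point as a finite convex combination of extreme points—and you correctly note that the chain of inequalities also shows the supremum over \(\operatorname{ext}(C)\) is attained, so writing it as a maximum is justified even though \(\operatorname{ext}(C)\) need not be closed.
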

The following lemma gives an example of a quasi-convex and quasi-concave function, as illustrated in \cite[p. 97, Example 3.32]{BV04}.
\begin{lem}\label{lemlf} Let $a=(a_1,\ldots,a_n), x=(x_1,\ldots,x_n), c=(c_1,\ldots,c_n)\in\mathbb{R}^n$ and $ b,d\in \mathbb{R}$. Then the function
	\[
	f(x)=\dfrac{ax^T+b}{cx^T+d}
	\]
	is quasi-convex and quasi-concave in the domain $\{x:cx^T+d>0\}$. Specially, $f(x)=ax^T+b$ is quasi-convex and quasi-concave.
\end{lem}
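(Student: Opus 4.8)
The plan is to argue directly from the definition of quasi-convexity in terms of level sets, exploiting that $f$ is a ratio of two affine functions restricted to the open half-space $C:=\{x:cx^T+d>0\}$, which is itself convex.

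First I would fix $\alpha\in\mathbb{R}$ and examine the lower level set $L(\alpha)=\{x\in C: f(x)\le\alpha\}$. For $x\in C$ the denominator $cx^T+d$ is strictly positive, so multiplying the inequality $\frac{ax^T+b}{cx^T+d}\le\alpha$ through by $cx^T+d$ preserves its direction and yields the equivalent affine inequality $(a-\alpha c)x^T+(b-\alpha d)\le 0$. Consequently
\[
L(\alpha)=\{x:cx^T+d>0\}\cap\{x:(a-\alpha c)x^T+(b-\alpha d)\le 0\},
\]
an intersection of an open half-space and a closed half-space, hence convex. Since this holds for every $\alpha$, $f$ is quasi-convex on $C$.

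Next, for quasi-concavity I would run the identical computation on the upper level set, or simply observe that $-f(x)=\frac{(-a)x^T+(-b)}{cx^T+d}$ is again linear-fractional with the same positive denominator, so the previous paragraph applies verbatim to $-f$; thus $-f$ is quasi-convex and $f$ is quasi-concave. The special case $f(x)=ax^T+b$ corresponds to taking $c=0$, $d=1$ (so that $C=\mathbb{R}^n$), in which case both the lower and upper level sets are half-spaces and hence convex, giving quasi-convexity and quasi-concavity simultaneously.

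The argument is elementary and presents no genuine obstacle; the only subtlety worth flagging is that clearing the denominator is legitimate precisely because we have confined attention to the region $cx^T+d>0$ — on the complementary region the inequality would reverse — which is why the stated domain cannot be dropped.
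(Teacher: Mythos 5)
Your proof is correct: clearing the (positive) denominator turns each lower (resp.\ upper) level set into the intersection of the convex domain $\{x:cx^T+d>0\}$ with a closed half-space, which is exactly the level-set characterization of quasi-convexity (resp.\ quasi-concavity) used in this paper. The paper gives no proof of its own, merely citing \cite[p.~97, Example 3.32]{BV04}, and your argument is precisely the standard one found there, so there is nothing to add.
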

The Rearrangement Inequality is a cornerstone of inequality theory, governing the behavior of product sums under permutations of sequences. It was first systematically studied in the seminal work of Hardy, Littlewood, and Pólya \cite{HLP34}.

\begin{lem}[Rearrangement Inequality]
	Let $a_1 \leq a_2 \leq \ldots \leq a_n$ and $b_1 \leq b_2 \leq \ldots \leq b_n$ be two non-decreasing sequences of real numbers. Let $S_n$ denote the symmetric group on $n$ elements (all permutations of $\{1,2,\dots,n\}$). For any permutation $\sigma \in S_n$, we have:
\begin{eqnarray}\label{ri}
		\sum_{i=1}^n a_i b_{n+1-i} \leq \sum_{i=1}^n a_i b_{\sigma(i)} \leq \sum_{i=1}^n a_i b_i.
\end{eqnarray}
	The left-hand side is the \emph{sum of products in reverse order} (the minimal possible value of the product sum), the middle term is the \emph{sum of products in arbitrary order} (a value between the two extremes), and the right-hand side is the \emph{sum of products in same order} (the maximal possible value of the product sum).  
	Equality holds if and only if all $a_i$ are equal or all $b_i$ are equal.
\end{lem}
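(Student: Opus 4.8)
\textbf{Proof proposal for Theorem \ref{thmlast}.}

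The plan is to reduce the problem to an optimization over the compact convex set $\mathcal{C}$ from Lemma \ref{lemext} and then invoke the maximum-on-extreme-points principle (Theorem \ref{lemmax}), exactly as for the upper bounds elsewhere in the paper, but now extracting a \emph{lower} bound from a quasi-concave objective. First I would set up coordinates: since $A$ and $B$ are normal, write their spectral decompositions $A = U\,\diag(\lambda_1,\ldots,\lambda_r,0,\ldots,0)\,U^*$ and $B = V\,\diag(\widehat\lambda_1,\ldots,\widehat\lambda_s,0,\ldots,0)\,V^*$ with $U,V$ unitary. Then $|A| = U\,\diag(|\lambda_j|,0)\,U^*$, $|B| = V\,\diag(|\widehat\lambda_j|,0)\,V^*$, and expanding the Frobenius norms via $\|X\|_F^2 = \tr(X^*X)$ gives
\[
\||A|-|B|\|_F^2 = \sum_{j=1}^r|\lambda_j|^2 + \sum_{j=1}^s|\widehat\lambda_j|^2 - 2\sum_{i,j} |\widehat\lambda_i|\,|\lambda_j|\, |w_{ij}|^2,
\]
\[
\|A-B\|_F^2 = \sum_{j=1}^r|\lambda_j|^2 + \sum_{j=1}^s|\widehat\lambda_j|^2 - 2\sum_{i,j} \Re\!\left(\widehat\lambda_i\overline{\lambda_j}\right) |w_{ij}|^2,
\]
where $W = (w_{ij}) = V^*U$ is unitary, so the matrix $Y := (|w_{ij}|^2)$ — restricted to the first $s$ rows and first $r$ columns, which are the only indices carrying nonzero eigenvalues — lies in $\mathcal{C}$ (its row and column sums are at most $1$, being sub-sums of the rows/columns of a unitary matrix). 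Thus the ratio squared $\||A|-|B|\|_F^2 / \|A-B\|_F^2$ equals $\phi(Y) := (\widehat F_{r,s} - 2\langle P, Y\rangle)/(\widehat F_{r,s} - 2\langle N, Y\rangle)$ where $P_{ij} = |\widehat\lambda_i||\lambda_j|$ and $N_{ij} = \Re(\widehat\lambda_i\overline{\lambda_j})$; note the denominator is positive since it equals a squared Frobenius norm (we may assume $A\neq B$, the claim being trivial otherwise).

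Next I would minimize $\phi$ over $\mathcal{C}$. The function $\phi$ is a ratio of two affine functions of $Y$ with positive denominator on $\mathcal{C}$, hence by Lemma \ref{lemlf} it is both quasi-convex and quasi-concave on $\mathcal{C}$; it is also continuous there. Since $\mathcal{C}$ is nonempty, compact, and convex, Theorem \ref{lemmax} gives
\[
\min_{Y\in\mathcal{C}}\phi(Y) = \min_{Y\in\operatorname{ext}(\mathcal{C})}\phi(Y).
\]
By Lemma \ref{lemext} the extreme points are $0$ and the sums $\sum_{t=1}^k E_{i_t j_t}$ of $k$ row- and column-disjoint unit matrices, $1\le k\le r$. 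At $Y=0$ the ratio is $1$; at $Y = \sum_{t=1}^k E_{i_t j_t}$ it equals $(\widehat F_{r,s} - 2\sum_{t=1}^k |\widehat\lambda_{i_t}||\lambda_{j_t}|)/(\widehat F_{r,s} - 2\sum_{t=1}^k \Re(\widehat\lambda_{i_t}\overline{\lambda_{j_t}}))$, and since $|\widehat\lambda_{i_t}||\lambda_{j_t}| \ge \Re(\widehat\lambda_{i_t}\overline{\lambda_{j_t}})$ each such value is at most $1$, so the $Y=0$ extreme point never beats them and may be dropped from the minimum. Collecting indices $(i_1\cdots i_k)$ and $(j_1\cdots j_k)$ into the ordered-arrangement sets $S_k([s])$, $S_k([r])$ (the disjointness-plus-increasing description and the ordered-arrangement description index the same collection of extreme-point values, since permuting the pairs leaves the sum unchanged) yields exactly the stated bound, with $\|A-B\|_F^2$ multiplied through.

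For optimality (sharpness of the coefficient) I would run the reduction in reverse: fix eigenvalue data $\{\lambda_j\}$, $\{\widehat\lambda_j\}$ and an index pattern achieving the minimum; choose $W = V^*U$ to be (a direct sum of $1\times 1$ blocks equal to $1$ on exactly the positions $(i_t,j_t)$, padded out to a genuine unitary by identity/permutation blocks on the complementary coordinates) so that $Y = (|w_{ij}|^2)$ is precisely that extreme point, and build $A,B$ from these unitaries and eigenvalues; then equality holds throughout, showing the constant cannot be improved. The main obstacle I anticipate is the bookkeeping in this last step — verifying that for an \emph{arbitrary} minimizing extreme point of $\mathcal{C}$ one can actually realize $(|w_{ij}|^2)$ as that point by a unitary $W$ of the right size acting on the nonzero-eigenvalue blocks while the zero blocks absorb the rest — together with the mild technical care needed because $\operatorname{ext}(\mathcal{C})$ is stated for the embedding $[Y,0]$ into a square matrix, so one must confirm the row/column-sum constraints transfer correctly and that $\phi$ genuinely attains its minimum on $\mathcal{C}$ (continuity plus compactness handles this, but one should note $\phi$ extends continuously even where earlier-vanishing denominators might threaten, which they do not here since the denominator is a squared norm).
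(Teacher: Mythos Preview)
Your proposal does not address the stated lemma at all: the statement you were asked to prove is the classical Rearrangement Inequality, but your write-up is explicitly a ``Proof proposal for Theorem~\ref{thmlast}'' and concerns the sharp lower bound $\||A|-|B|\|_F \ge c\,\|A-B\|_F$ for normal matrices. Nothing in your argument establishes, or even touches, the inequality $\sum a_i b_{n+1-i}\le \sum a_i b_{\sigma(i)} \le \sum a_i b_i$ for two ordered real sequences, nor its equality condition. (Incidentally, the paper does not supply its own proof of the Rearrangement Inequality either; it records it as a known lemma and cites Hardy--Littlewood--P\'olya.)

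If your intent was instead to prove Theorem~\ref{thmlast}, then your approach is essentially identical to the paper's: you take spectral decompositions, write the squared ratio as a linear-fractional function of the doubly substochastic variable $Y=(|w_{ij}|^2)\in\mathcal{C}$, invoke Lemma~\ref{lemlf} for quasi-concavity, apply Theorem~\ref{lemmax} and Lemma~\ref{lemext} to push the minimum to the $(0,1)$ extreme points, discard the zero extreme point because it gives the value $1$, and then realize the minimizing extreme point by a suitable unitary for sharpness. That is exactly the paper's route, with the same lemmas in the same order; the only differences are notational (your $W=V^*U$ versus the paper's $\widehat S=\widehat U^*U$). But as a response to the Rearrangement Inequality, it is simply the wrong target.
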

Next, we establish the following key lemma, which is central to this paper.
\begin{lem}\label{keylem} Let $r\le s$ and \(\mathcal{C}_1\) be the set defined by
	\[
	\mathcal{C}_1 = \left\{ (x_{ij}) \in \mathbb{R}^{s \times r} \mid  \ \sum_{i=1}^s \left|x_{ij}\right|\leq 1 \text{ for each } 1 \leq j \leq r, \ \sum_{j=1}^r \left|x_{ij}\right| \leq 1 \text{ for each } 1 \leq i \leq s \right\}.
	\]
 For given $\sigma_1\ge \ldots\ge \sigma_r>0$ and $\widetilde{\sigma}_1\ge \ldots\ge \widetilde{\sigma}_s>0$, define the matrix function $f(X)$ on $\mathcal{C}_1$ by
\begin{eqnarray*}
	f(X)=\frac{r+s-2\displaystyle\sum_{i=1}^s\sum_{j=1}^r x_{ij}}
	{\,\sum_{j=1}^r\sigma_j^2+\sum_{j=1}^s\widetilde{\sigma}_j^2-2\displaystyle\sum_{i=1}^s\sum_{j=1}^r\tilde\sigma_i\,\sigma_j\,x_{ij}\,}.
\end{eqnarray*} Then the maximum and minimum of $f$ are
\begin{eqnarray*}
f(X)_{\max}&=&\max\limits_{0\le k\le r}\frac{s-r + 4k}
{\sum_{j=1}^{r-k}(\sigma_j-\widetilde{\sigma}_j)^2+\sum_{j=1}^{k}(\sigma_{r+1-j}+\widetilde{\sigma}_{s-k+j})^2+\sum_{j=r-k+1}^{s-k}\widetilde{\sigma}_j^2},\\
f(X)_{\min}&=&\min\limits_{0\le k\le r}\frac{s-r + 4k}
{\sum_{j=1}^{k}(\sigma_j+\widetilde{\sigma}_j)^2+\sum_{j=1}^{r-k}(\sigma_{r+1-j}-\widetilde{\sigma}_{s-r+k+j})^2+\sum_{j=k+1}^{s-r+k}\widetilde{\sigma}_j^2}.
\end{eqnarray*}
Let $k^\star, k_\star$  be the indices at which $f$ attains its maximum and minimum, respectively. The corresponding maximizer and minimizer are given by
\[
X^\star=\begin{pmatrix}
	I_{r-k^\star}&0\\
	0&0\\
	0&-S_{k^\star}
\end{pmatrix},\qquad 	X_\star=\begin{pmatrix}
-I_{k_\star}&0\\
0&0\\
0&S_{r-k_\star}
\end{pmatrix}\in \mathbb{R}^{s\times r},
\]
where $I_{r-k^\star}$ and $I_{k_\star}$ are identity matrices of size  $(r-k^\star)\times (r-k^\star)$ and $ k_\star\times k_\star$, respectively, and $S_{k^\star}$ and $S_{r-k_\star}, $ are reversal matrices of size $k^\star\times k^\star$ and $(r-k_\star)\times (r- k_\star)$, respectively.
\end{lem}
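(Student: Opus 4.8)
The plan is to exploit the structure of $f$ as a ratio of affine functions on the convex polytope $\mathcal{C}_1$, so that Lemma~\ref{lemlf} and Theorem~\ref{lemmax} reduce the optimization to the (finitely many) extreme points of $\mathcal{C}_1$. First I would observe that $\mathcal{C}_1$ is a nonempty compact convex set: it is bounded (all entries lie in $[-1,1]$), closed, and convex since the constraint functions $\sum_i|x_{ij}|$ and $\sum_j|x_{ij}|$ are convex. The denominator of $f$ equals $\sum_j(\sigma_j^2+\widetilde\sigma_j^2)-2\sum_{i,j}\widetilde\sigma_i\sigma_j x_{ij}$; I would check it stays strictly positive on $\mathcal{C}_1$ — indeed $\sum_{i,j}\widetilde\sigma_i\sigma_j x_{ij}\le \sum_{i,j}\widetilde\sigma_i\sigma_j|x_{ij}|\le \frac12\sum_{i,j}(\widetilde\sigma_i^2+\sigma_j^2)|x_{ij}|$, and using the row/column sum constraints this is bounded by $\frac12(\sum_j\sigma_j^2+\sum_i\widetilde\sigma_i^2\cdot\frac{r}{s}\cdot\text{(something)})$... more carefully, $\sum_{i,j}\widetilde\sigma_i^2|x_{ij}|=\sum_i\widetilde\sigma_i^2\sum_j|x_{ij}|\le\sum_i\widetilde\sigma_i^2$ and similarly $\sum_{i,j}\sigma_j^2|x_{ij}|\le\sum_j\sigma_j^2$, so $2\sum_{i,j}\widetilde\sigma_i\sigma_jx_{ij}\le\sum_j\sigma_j^2+\sum_i\widetilde\sigma_i^2$, giving a nonnegative denominator, with strict positivity holding except possibly at boundary configurations where the singular values collide; a short separate argument handles equality, or one notes $f$ extends continuously. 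Hence $f$ is continuous on $\mathcal{C}_1$, and by Lemma~\ref{lemlf} it is both quasi-convex and quasi-concave there.

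By Theorem~\ref{lemmax}, both $\max_{\mathcal{C}_1}f$ and $\min_{\mathcal{C}_1}f$ are attained at extreme points of $\mathcal{C}_1$. The next step is to identify $\operatorname{ext}(\mathcal{C}_1)$. Writing $x_{ij}=\varepsilon_{ij}y_{ij}$ with $\varepsilon_{ij}\in\{\pm1\}$ and $y_{ij}=|x_{ij}|\ge0$ shows $\mathcal{C}_1=\{(\varepsilon_{ij}y_{ij}): (y_{ij})\in\mathcal{C},\ \varepsilon_{ij}\in\{\pm1\}\}$ with $\mathcal{C}$ the set from Lemma~\ref{lemext}; a point of $\mathcal{C}_1$ is extreme iff its matrix of absolute values is extreme in $\mathcal{C}$ and the signs on the support are fixed. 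Thus $\operatorname{ext}(\mathcal{C}_1)$ consists of the zero matrix together with all matrices of the form $\sum_{t=1}^k \pm E_{i_t j_t}$ with $1\le i_1<\cdots<i_k\le s$, $1\le j_1<\cdots<j_k\le r$, $1\le k\le r$, and arbitrary independent signs. On such a point, $\sum_{i,j}x_{ij}=\sum_{t=1}^k(\pm1)$ and $\sum_{i,j}\widetilde\sigma_i\sigma_j x_{ij}=\sum_{t=1}^k(\pm1)\widetilde\sigma_{i_t}\sigma_{j_t}$, so $f$ depends only on $k$, the chosen index pairs, and the $k$ signs.

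The heart of the argument is then a finite combinatorial optimization: for fixed $k$, among all choices of $i_1<\cdots<i_k$, $j_1<\cdots<j_k$ and signs $\epsilon_t\in\{\pm1\}$, maximize (resp. minimize)
\[
\frac{\,s-r+2k-2\sum_{t=1}^k\epsilon_t\,}{\sum_{j=1}^r\sigma_j^2+\sum_{j=1}^s\widetilde\sigma_j^2-2\sum_{t=1}^k\epsilon_t\widetilde\sigma_{i_t}\sigma_{j_t}}.
\]
For the maximum: to enlarge the numerator and shrink the denominator we want the $\epsilon_t$ negative and the products $\widetilde\sigma_{i_t}\sigma_{j_t}$ as large as possible; but a sign change also changes the numerator, so I would split by how many signs are $-1$. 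Suppose $\ell$ of the signs are $+1$ and $k-\ell$ are $-1$; then the numerator is $s-r+2k-2(2\ell-k)=s-r+4k-4\ell$ wait — $\sum\epsilon_t=\ell-(k-\ell)=2\ell-k$, so numerator $=s-r+2k-2(2\ell-k)=s-r+4k-4\ell$. Hmm, this is maximized at $\ell=0$ only if that also doesn't hurt the denominator too much; since with $\epsilon_t=-1$ the denominator becomes $F-(-2\sum\widetilde\sigma_{i_t}\sigma_{j_t})$... let me recompute: denominator $=F_{r,s}-2\sum\epsilon_t\widetilde\sigma_{i_t}\sigma_{j_t}$, so with all $\epsilon_t=-1$ it equals $F_{r,s}+2\sum\widetilde\sigma_{i_t}\sigma_{j_t}$, which is \emph{larger}, hurting the max. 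So the genuine trade-off forces us to consider mixed signs, and this is exactly why the optimal $k$ (and the optimal sign pattern) cannot be given in closed form, matching the stated remark. I would argue that at the optimum the signs split cleanly: the $+1$'s should be paired with the largest products and $-1$'s with... no — to maximize $f$ one wants numerator large and denominator small, so $-1$ signs should be attached to the \emph{smallest} available products $\widetilde\sigma_{i_t}\sigma_{j_t}$ (minimizing the increase $+2\widetilde\sigma_{i_t}\sigma_{j_t}$ to the denominator) while $+1$ signs attach to the \emph{largest} products (maximizing the decrease). Once the count of $+1$'s, say $r-k'$ for a reindexing, is fixed, the Rearrangement Inequality pins down which products: the $+1$ block uses $\sigma_1\widetilde\sigma_1,\dots,\sigma_{r-k'}\widetilde\sigma_{r-k'}$ (top-aligned, by rearrangement this maximizes their sum) and the $-1$ block, to minimize $\sum\widetilde\sigma_{i_t}\sigma_{j_t}$ over a $k'$-subset with increasing indices, uses $\sigma_r\widetilde\sigma_s,\sigma_{r-1}\widetilde\sigma_{s-1},\dots$, i.e. $\sum_{j=1}^{k'}\sigma_{r+1-j}\widetilde\sigma_{s+1-j}$, but then the unused $\widetilde\sigma_j$ for $j$ in the middle range contribute nothing — and completing the square $(\sigma_j-\widetilde\sigma_j)^2$, $(\sigma_{r+1-j}+\widetilde\sigma_{s-k'+j})^2$ in numerator form $s-r+4k'$ reproduces exactly the claimed expression with $k=k'$. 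The minimum is symmetric: swap the roles of $+1$ and $-1$, giving $(\sigma_j+\widetilde\sigma_j)^2$ for the top block and $(\sigma_{r+1-j}-\widetilde\sigma_{s-r+k+j})^2$ for the bottom, with leftover $\widetilde\sigma_j^2$ terms. Finally I would read off the maximizer/minimizer matrices $X^\star,X_\star$ directly from this sign-and-index assignment: the identity block $I_{r-k^\star}$ encodes the $+1$'s on the top-aligned products, the reversal block $-S_{k^\star}$ encodes the $-1$'s on the anti-diagonal pairing $(\sigma_{r+1-j},\widetilde\sigma_{s-k^\star+j})$, and the zero rows absorb the unused $\widetilde\sigma_j$. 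The main obstacle is carrying out this sign/index optimization rigorously — establishing that mixed sign patterns are dominated by the "top block $+$, bottom anti-diagonal block $-$" configuration and that Rearrangement applies after the index positions are fixed; the algebra of completing the square to match the stated formula is then routine.
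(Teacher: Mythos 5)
You follow the same route as the paper: quasi-convexity/quasi-concavity of the affine ratio (Lemma~\ref{lemlf}), reduction to extreme points (Theorem~\ref{lemmax} with Lemma~\ref{lemext}), then a rearrangement analysis over signed sub-permutation matrices. However, the combinatorial heart of the lemma is both mis-stated and left unproved. On a signed sub-permutation matrix with $k$ nonzero entries and signs $\epsilon_1,\dots,\epsilon_k$, the numerator of $f$ is $r+s-2\sum_{t=1}^k\epsilon_t$, not $s-r+2k-2\sum_{t=1}^k\epsilon_t$; the two agree only when $k=r$. Consequently your count (``$\ell$ plus signs gives numerator $s-r+4(k-\ell)$'') and your later reindexing of the number of $+1$'s as $r-k'$ silently assume that the optimizer has exactly $r$ nonzero entries. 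That full-support reduction is precisely the nontrivial step: the paper fixes the cardinalities $k_1,k_2$ of minus/plus entries, uses the rearrangement inequality (\ref{ri}) twice to pin down the optimal column and row indices, and then runs a discrete directional argument on the grid $\{k_1,k_2\ge 0,\ k_1+k_2\le r\}$ (moves right, up, diagonal, with $\Delta_{\text{right}}\le 2\widetilde\sigma_{s-k_1}\sigma_{r-k_1}$, $\Delta_{\text{up}}=2\widetilde\sigma_{k_2+1}\sigma_{k_2+1}$, hence $\Delta_{\text{right}}\le\Delta_{\text{up}}$) to show $f$ can always be weakly increased (resp.\ decreased) until $k_1+k_2=r$. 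You supply neither this reduction nor a correct treatment of general $k$, and you yourself flag the ``sign/index optimization'' as the main unresolved obstacle --- but that optimization \emph{is} the content of the lemma, so the proposal does not prove it.

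Two further slips. Your pairing for the $-1$ block, $\sum_{j=1}^{k'}\sigma_{r+1-j}\widetilde\sigma_{s+1-j}$, is the same-order pairing of the two bottom blocks; the rearrangement inequality gives the reverse-order pairing $\sum_{j=1}^{k'}\sigma_{r+1-j}\widetilde\sigma_{s-k'+j}$ as the minimizer, and only the latter matches the denominator term $(\sigma_{r+1-j}+\widetilde\sigma_{s-k'+j})^2$ and the reversal block $-S_{k^\star}$ in $X^\star$. Also, your characterization of $\operatorname{ext}(\mathcal{C}_1)$ (absolute value extreme in $\mathcal{C}$, arbitrary signs) lists a strict superset of the extreme points: $0=\tfrac12E_{11}+\tfrac12(-E_{11})$ is not extreme in $\mathcal{C}_1$, and any signed sub-permutation with fewer than $r$ entries can be perturbed at a free row/column position in both directions. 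This over-inclusion is harmless for bounding the optimum (the paper uses Lemma~\ref{lemext} the same way), but it is worth noting that the genuine extreme points of $\mathcal{C}_1$ all have exactly $r$ nonzero entries; proving that would be one legitimate way to justify the full-support restriction you tacitly used, whereas as written you have neither that argument nor the paper's grid argument.
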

\begin{proof}
By Lemma \ref{lemlf}, $f$ is quasi-convex and quasi-concave in $x_{ij}$. Thus, by Lemma \ref{lemmax}, $f$ attains both its maximum and minimum on 
ext$(\mathcal{C}_1)$. Using Lemma \ref{lemext}, we readily conclude that 
	\[
\text{ext}(\mathcal{C}_1)=\{0\} \cup \left\{ \sum_{t=1}^k \pm E_{i_t j_t} \mid 1 \leq i_1  < \cdots < i_k \leq s, \ 1 \leq j_1 <  \cdots < j_k \leq r, \ 1 \leq k \leq r \right\}.
	\]

Now, we  consider maximizing and minimizing $f$ on the set ext$(\mathcal{C}_1)$. First, we introduce some notation to simplify the variables. Denote
\begin{itemize}
	\item $\mathcal{I_-}:=\{i:x_{ij}=-1\}=\{i_1^-,i_2^-,\ldots,i_{k_1}^-\}$, where $1\le i_1^-<i_2^-<\ldots<i_{k_1}^-\le s$, i.e., an increasing sequence;
	\item $\mathcal{I_+}:=\{i:x_{ij}=1\}=\{i_1^+,i_2^+,\ldots,i_{k_2}^+\}$, where $k_2\le r-k_1$ and $1\le i_1^+<i_2^+<\ldots<i_{k_2}^+\le s$, i.e., an increasing sequence;
	\item $\mathcal{J_-}:=\{j:x_{ij}=-1\}=\{j_1^-,j_2^-,\ldots,j_{k_1}^-\}$, where the order of $j_l^- (1\le l\le k_1)$ is not required;
	\item $\mathcal{J_+}:=\{j:x_{ij}=1\}=\{j_1^+,j_2^+,\ldots,j_{k_2}^+\}$, where the order of $j_l^+ (1\le l\le k_2)$ is not required;
	\item
	$F_{r,s}:=\sum_{j=1}^r\sigma_j^2+\sum_{j=1}^s\widetilde{\sigma}_j^2$.
\end{itemize}

Then

\[
f(\mathcal{I_-},\mathcal{I_+},\mathcal{J_-},\mathcal{J_+}) = \frac{r+s+2(k_1-k_2)}{F_{r,s}+2\left( \sum_{l=1}^{k_1}\widetilde{\sigma}_{i_l^-}\sigma_{j_l^-}-\sum_{l=1}^{k_2}\widetilde{\sigma}_{i_l^+}\sigma_{j_l^+}\right) }.
\]

\begin{center}
	\textbf{Maximizing $f$ on the set ext$(\mathcal{C}_1)$}
\end{center}

For fixed $\mathcal{I_-}$ and  $\mathcal{I_+}$, maximizing $f$  via the rearrangement inequality (\ref{ri}) dictates selecting $\mathcal{J_-}=\{r,r-1,\ldots,r-k_1+1\}$  (a descending sequence) and $\mathcal{J_+}=\{1,2,\ldots, k_2\}$ (an increasing sequence). For this choice,
\[
f(\mathcal{I_-},\mathcal{I_+})=\frac{r+s+2(k_1-k_2)}{F_{r,s}+2\left( \sum_{l=1}^{k_1}\widetilde{\sigma}_{i_l^-}\sigma_{r+1-l}-\sum_{l=1}^{k_2}\widetilde{\sigma}_{i_l^+}\sigma_{l}\right) }.
\]
For fixed $\left|\mathcal{I_-}\right|=k_1$ and $\left|\mathcal{I_+}\right|=k_2$, to maximize $f$,  the rearrangement inequality (\ref{ri}) again leads us to select $\mathcal{I_-}=\{s-k_1+1,s-k_1+2,...,s\}$  and $\mathcal{I_+}=\{1,2,\ldots, k_2\}$. For this choice,
\[
f(k_1, k_2)=\frac{r+s+2(k_1-k_2)}{F_{r,s}+2\left( \sum_{l=1}^{k_1}\widetilde{\sigma}_{s-k_1+l}\sigma_{r+1-l}-\sum_{l=1}^{k_2}\widetilde{\sigma}_{l}\sigma_{l}\right) }.
\]

We consider the objective function defined as:
\[
f(k_1, k_2) = \frac{r + s + 2(k_1 - k_2)}{F_{r,s} + 2A(k_1) - 2B(k_2)},
\]
where	$k_1,k_2\ge 0, k_1+k_2\le r$ and
\begin{align*}
		A(k_1) &= \sum_{j=1}^{k_1} \widetilde{\sigma}_{s - k_1 + j}\cdot \sigma_{r + 1 - j}, \\
	B(k_2) &= \sum_{j=1}^{k_2} \widetilde{\sigma}_j \cdot\sigma_j, \\
	F_{r,s} &= \sum_{j=1}^{r} \sigma_j^2 + \sum_{j=1}^{s} \widetilde{\sigma}_j^2.
\end{align*}

First, we analyze the change in the function value along the following three directions, a schematic diagram is shown in Figure \ref{fig:function_initial_11}.

\bigskip

Direction 1: Move Right, i.e., \((k_1, k_2) \rightarrow (k_1 + 1, k_2)\).

\noindent
This move is valid under the constraint \(k_1 + k_2 + 1 \le r\).

\begin{itemize}
	\item Numerator increases by \(+2\).
	\item Denominator increases by:
\begin{eqnarray*}
\Delta_{\text{right}} &=& 2\left(\sum_{j=1}^{k_1+1}\widetilde{\sigma}_{s-(k_1+1)+j}\sigma_{r+1-j}- \sum_{j=1}^{k_1}\widetilde{\sigma}_{s-k_1+j}\sigma_{r+1-j}\right)\\
&\le&2\left(\widetilde{\sigma}_{s-k_1}\sigma_{r-k_1}+\sum_{j=1}^{k_1}\widetilde{\sigma}_{s-k_1+j}\sigma_{r+1-j}- \sum_{j=1}^{k_1}\widetilde{\sigma}_{s-k_1+j}\sigma_{r+1-j}\right) \\
&&\text{ (By the rearrangement inequality (\ref{ri}))}\\
&=&2\widetilde{\sigma}_{s-k_1}\sigma_{r-k_1}.
\end{eqnarray*}

\end{itemize}

Hence, the function becomes:
\[
f(k_1 + 1, k_2) = \frac{N + 2}{D + \Delta_\text{right}},
\quad \text{where } N = r + s + 2(k_1 - k_2),\quad D = F_{r,s} + 2A(k_1) - 2B(k_2).
\]

To compare \(f(k_1 + 1, k_2)\) and \(f(k_1, k_2)\), we require:
\begin{eqnarray}\label{right}
	\frac{N + 2}{D + \Delta_\text{right}} > \frac{N}{D}
	\quad \Longleftrightarrow \quad
	2D > N \cdot \Delta_\text{right}.
\end{eqnarray}

If this inequality fails, the function value decreases along this direction.

\bigskip

Direction 2: Move Up, i.e., \((k_1, k_2) \rightarrow (k_1, k_2 + 1)\).

\noindent
This move is valid under the constraint \(k_1 + k_2 + 1 \le r\).

\begin{itemize}
	\item Numerator decreases by \(-2\).
	\item Denominator decreases by:
	\[
	\Delta_{\text{up}} = 2 \cdot \widetilde{\sigma}_{k_2 + 1} \cdot \sigma_{k_2 + 1}.
	\]
\end{itemize}

Thus, the function becomes:
\[
f(k_1, k_2 + 1) = \frac{N - 2}{D - \Delta_{\text{up}}}.
\]

We compare:
\begin{eqnarray}\label{up}
	\frac{N - 2}{D - \Delta_{\text{up}}} > \frac{N}{D}
	\quad \Longleftrightarrow \quad
	-2D > -N \cdot \Delta_{\text{up}}
	\quad \Longleftrightarrow \quad
	2D < N \cdot \Delta_{\text{up}}.
\end{eqnarray}

If this inequality fails, the function value decreases along this direction.
\bigskip

Since $\Delta_{\text{right}}- \Delta_{\text{up}}\le \widetilde{\sigma}_{s-k_1}\sigma_{r-k_1}-\widetilde{\sigma}_{k_2 + 1} \cdot \sigma_{k_2 + 1}\le 0$, from (\ref{right}) and (\ref{up}) we conclude that it is always possible to increase the function value by moving right or upward, which means $f$ attains its maximum when $k_1+k_2=r$.
\bigskip

Direction 3: Move Diagonally, i.e., \((k_1, k_2) \rightarrow (k_1 + 1, k_2 + 1)\).

\noindent
This move is valid under the constraint \(k_1 + k_2 + 2 \le r\).

\begin{itemize}
	\item Numerator remains unchanged.
	\item Denominator changes by:
	\[
	\Delta_{\text{diag}}=\Delta_{\text{right}}- \Delta_{\text{up}}\le \widetilde{\sigma}_{s-k_1}\sigma_{r-k_1}-\widetilde{\sigma}_{k_2 + 1} \cdot \sigma_{k_2 + 1}\le 0.
	\]
\end{itemize}

Hence, the function value becomes:
\[
f(k_1 + 1, k_2 + 1) = \frac{N}{D + \Delta_{\text{diag}}},
\]
which is increasing.

\bigskip

\begin{figure}[]
	\centering
	\begin{tikzpicture}[scale=1.5] 
		\draw[->, gray!30, thick] (0,0) -- (6,0) node[right, gray!50] {$k_1$};
		\draw[->, gray!30, thick] (0,0) -- (0,6) node[above, gray!50] {$k_2$};
		
		\foreach \i in {0,...,6}
		\foreach \j in {0,...,6}
		\fill[gray!30] (\i,\j) circle (1.5pt);
		
		\draw[dashed, gray!50, thick] (0,6) -- (6,0) node[midway, above left=2pt, gray!70] {$k_1 + k_2 = 6$};
		
		\fill[black!80] (1,1) circle (3pt) node[below left=0.1pt] {$\mathbf{P}(k_1, k_2)$}; 
		
		\draw[blue!70, very thick, ->] (1,1) -- (2,1) 
		node[midway, above=2pt, blue!80] {};
		\node[blue!80, anchor=west, font=\small] at (2.1,1) {$\Delta N = 2$};
		\node[blue!80, anchor=west, font=\small] at (2.1,0.8) {$\Delta D = 2\Delta_{\text{right}}$};
		\node[blue!80, anchor=west, font=\small\itshape] at (2.1,0.6) {Increase if $2D > N\Delta_{\text{right}}$};
		
		\draw[red!70, very thick, ->] (1,1) -- (1,2) 
		node[midway, right=2pt, red!80] {};
		\node[red!80, anchor=south, font=\small] at (1,2.4) {$\Delta N = -2$};
		\node[red!80, anchor=south, font=\small] at (1,2.2) {$\Delta D = -2\Delta_{\text{up}}$};
		\node[red!80, anchor=south, font=\small\itshape] at (1.1,2) {Increase if $2D < N \Delta_{\text{up}}$};
		
		\draw[green!70, very thick, ->] (1,1) -- (2,2) 
		node[midway, above right=2pt, green!80] {};
		\node[green!80, anchor=west, font=\small] at (2.1,2.1) {Increase};

		\node[above=15pt, font=\bfseries] at (3,6.5) {Changes of $f(k_1, k_2)$ with Initial Point (1,1)};
	\end{tikzpicture}
	\caption{$r=6, s=7$, the objective function $f(k_1, k_2)$ is analyzed at the initial grid point $\mathbf{P}(1,1)$ (black circle). The dashed line represent the linear constraint $k_1 + k_2 = 6$. Arrows show stepwise changes from (1,1) to adjacent grid points, with conditions for $f$ to increase labeled in italic.}
	\label{fig:function_initial_11}
\end{figure}
\bigskip

From the above discussions, we know $f$ attains its maximum when $k_1+k_2=r$, thus, \begin{eqnarray*}
	\max f(X) &=& \max\limits_{k_1+k_2=r } \frac{s-r + 2(k_1 - k_2)}{F_{r,s} + 2A(k_1) - 2B(k_2)}\\
	&=&\max\limits_{0\le k\le r}\frac{s-r + 4k}
	{\sum_{j=1}^{r-k}(\sigma_j-\widetilde{\sigma}_j)^2+\sum_{j=1}^{k}(\sigma_{r+1-j}+\widetilde{\sigma}_{s-k+j})^2+\sum_{j=r-k+1}^{s-k}\widetilde{\sigma}_j^2},
\end{eqnarray*}
which can not be reduced to some specific $k$, an example see Table \ref{tab:max_k_examples}.

	\begin{table}[H]
	\centering
	\caption{Constructed examples where the maximum of $f(k)$ is achieved at different $k^* \in \{0, 1, 2, 3\}$, with $r =3, s = 4$.}
	\begin{tabular}{c|l|l|l}
		\toprule
		$k^*$ (Max Pos) & $\boldsymbol{\sigma}$ & $\boldsymbol{\widetilde{\sigma}}$ & $f(k)\ (k = 0 \sim 3)$ \\
		\midrule	
		0 & $[8.7559,\ 6.1282,\ 5.0602]$ & $[7.3693,\ 5.7829,\ 3.2958,\ 2.5156]$ & 
		$[\textcolor{green!60!black}{\mathbf{0.0871}},\ 0.0711,\ 0.0500,\ 0.0335]$ \\
	1 & $[4.3814,\ 4.0178,\ 1.5170]$ & $[9.5423,\ 8.6941,\ 6.1336,\ 3.1648]$ & 
	$[0.0125,\ \textcolor{green!60!black}{\mathbf{0.0463}},\ 0.0424,\ 0.0366]$ \\

	2 & $[7.6090,\ 3.3643,\ 2.5097]$ & $[8.4940,\ 7.8752,\ 7.5506,\ 4.7848]$ & 
	$[0.0144,\ 0.0381,\ \textcolor{green!60!black}{\mathbf{0.0391}},\ 0.0287]$ \\
	3 & $[2.5242,\ 2.4113,\ 1.4701]$ & $[9.7298,\ 7.0899,\ 6.1945,\ 4.3453]$ & 
	$[0.0087,\ 0.0342,\ 0.0436,\ \textcolor{green!60!black}{\mathbf{0.0450}}]$ \\
		\bottomrule
	\end{tabular}
	\label{tab:max_k_examples}
\end{table}

Let $k^\star$ be the index which makes $f$ attain its maximum. Herein $x_{jj}=1, 1\le j\le r-k^\star$, $x_{s-k^\star+j,r+1-j}=-1, 1\le j\le k^\star $ and other $x_{ij}=0$, i.e.,
\begin{eqnarray*}
	X^\star=\begin{pmatrix}
		I_{r-k^\star}&0\\
		0&0\\
		0&-S_{k^\star}
	\end{pmatrix}\in \mathbb{R}^{s\times r},
\end{eqnarray*}
where $I_{r-k^\star}$ is a $(r-k^\star)\times(r-k^\star)$ identity matrix and $S_{k^\star}$ is a $k^\star\times k^\star$ reversal matrix.

\begin{center}
	\textbf{Minimizing $f$ on the set ext$(\mathcal{C}_1)$}
\end{center}

For fixed $\mathcal{I_-}$ and $\mathcal{I_+}$, minimizing $f$ via  the rearrangement inequality (\ref{ri}) dictates selecting $\mathcal{J_-}=\{1,2,\ldots, k_1\}$ (an increasing order) and $\mathcal{J_+}=\{r,r-1,\ldots,r-k_2+1\}$ (a descending order). For this choice,
\[
f(\mathcal{I_-},\mathcal{I_+})=\frac{2r+2(k_1-k_2)}{F_{r,s}+2\left( \sum_{l=1}^{k_1}\widetilde{\sigma}_{i_l^-}\sigma_{l}-\sum_{l=1}^{k_2}\widetilde{\sigma}_{i_l^+}\sigma_{r+1-l}\right) }.
\]
For fixed $\left|\mathcal{I_-}\right|=k_1$ and $\left|\mathcal{I_+}\right|=k_2$, to minimize $f$,  the rearrangement inequality (\ref{ri}) again leads us to select $\mathcal{I_-}=\{1,\ldots, k_1\}$ and $\mathcal{I_+}=\{s-k_2+1, s-k_2+2, ..., s\}$. For this choice,
\[
f(k_1, k_2)=\frac{2r+2(k_1-k_2)}{F_{r,s}+2\left(\sum_{l=1}^{k_1}\widetilde{\sigma}_{l}\sigma_{l}- \sum_{l=1}^{k_2}\widetilde{\sigma}_{s-k_2+l}\sigma_{r+1-l}\right)}.
\]

We consider the objective function defined as:
\[
f(k_1, k_2) = \frac{r + s + 2(k_1 - k_2)}{F_{r,s} + 2B(k_1) - 2A(k_2)},
\]
where	$k_1,k_2\ge 0, k_1+k_2\le r$ and
\begin{align*}
	A(k_2) &= \sum_{j=1}^{k_2} \widetilde{\sigma}_{s - k_2 + j} \cdot \sigma_{r + 1 - j}, \\
	B(k_1) &= \sum_{j=1}^{k_1} \widetilde{\sigma}_j \cdot \sigma_j, \\
	F_{r,s} &= \sum_{j=1}^{r} \sigma_j^2 + \sum_{j=1}^{s} \widetilde{\sigma}_j^2.
\end{align*}

Similarly, we analyze the change in the function value along the following three directions.

\bigskip

Direction 1: Move Right, i.e., \((k_1, k_2) \rightarrow (k_1 + 1, k_2)\).

\noindent
This move is valid under the constraint \(k_1 + k_2 + 1 \le r\).

\begin{itemize}
	\item Numerator increases by \(+2\).
	\item Denominator increases by:
	\begin{eqnarray*}
		\Delta_{\text{right}} &=&  2\widetilde{\sigma}_{k_1+1} \cdot \sigma_{k_1+1}
	\end{eqnarray*}

\end{itemize}

Hence, the function becomes:
\[
f(k_1 + 1, k_2) = \frac{N + 2}{D + \Delta_\text{right}},
\quad \text{where } N = r + s + 2(k_1 - k_2),\quad D = F_{r,s} + 2B(k_1) - 2A(k_2).
\]

To compare \(f(k_1 + 1, k_2)\) and \(f(k_1, k_2)\), we require:
\begin{eqnarray}\label{minright}
	\frac{N + 2}{D + \Delta_\text{right}} < \frac{N}{D}
	\quad \Longleftrightarrow \quad
	2D < N \cdot \Delta_\text{right}.
\end{eqnarray}

If this inequality fails, the function value increases along this direction.

\bigskip

Direction 2: Move Up, i.e., \((k_1, k_2) \rightarrow (k_1, k_2 + 1)\).

\noindent
This move is valid under the constraint \(k_1 + k_2 + 1 \le r\).

\begin{itemize}
	\item Numerator decreases by \(-2\).
	\item Denominator decreases by:
\begin{eqnarray*}
	\Delta_{\text{up}} &=& 2\left(\sum_{j=1}^{k_2+1}\widetilde{\sigma}_{s-(k_2+1)+j}\sigma_{r+1-j} -\sum_{j=1}^{k_2}\widetilde{\sigma}_{s-k_2+j}\sigma_{r+1-j}\right)\\
	&\le& 2\left(\widetilde{\sigma}_{s-k_2}\sigma_{r-k_2}+\sum_{j=1}^{k_2}\widetilde{\sigma}_{s-k_2+j}\sigma_{r+1-j} -\sum_{j=1}^{k_2}\widetilde{\sigma}_{s-k_2+j}\sigma_{r+1-j}\right)\\
	&&\text{ (By the rearrangement inequality (\ref{ri}))}\\
	&=&2\widetilde{\sigma}_{s-k_2}\sigma_{r-k_2}.
\end{eqnarray*}

\end{itemize}

Thus, the function becomes:
\[
f(k_1, k_2 + 1) = \frac{N - 2}{D - \Delta_{\text{up}}}.
\]

We compare:
\begin{eqnarray}\label{minup}
	\frac{N - 2}{D - \Delta_{\text{up}}} < \frac{N}{D}
	\quad \Longleftrightarrow \quad
	-2D < -N \cdot \Delta_{\text{up}}
	\quad \Longleftrightarrow \quad
	2D > N \cdot \Delta_{\text{up}}.
\end{eqnarray}

If this inequality fails, the function value increases along this direction.
\bigskip

Since $\Delta_{\text{right}}- \Delta_{\text{up}}\ge \widetilde{\sigma}_{k_1 + 1} \cdot \sigma_{k_1 + 1}-\widetilde{\sigma}_{s-k_2}\sigma_{r-k_2}\ge 0$, from (\ref{minright}) and (\ref{minup}) we conclude that it is always possible to decrease the function value by moving right or upward, which means $f$ attains its minimum when $k_1+k_2=r$.
\bigskip

Direction 3: Move Diagonally, i.e., \((k_1, k_2) \rightarrow (k_1 + 1, k_2 + 1)\).

\noindent
This move is valid under the constraint \(k_1 + k_2 + 2 \le r\).

\begin{itemize}
	\item Numerator remains unchanged.
	\item Denominator changes by:
	\[
	\Delta_{\text{diag}}=\Delta_{\text{right}}- \Delta_{\text{up}}\ge \widetilde{\sigma}_{k_1 + 1} \cdot \sigma_{k_1 + 1}-\widetilde{\sigma}_{s-k_2}\sigma_{r-k_2}\ge 0.
	\]
\end{itemize}

Hence, the function value becomes:
\[
f(k_1 + 1, k_2 + 1) = \frac{N}{D + \Delta_{\text{diag}}},
\]
which is decreasing.

\bigskip

From the above discussions, we know $f$ attains its minimum when $k_1+k_2=r$, \begin{eqnarray*}
	\min f(X) &=& \min\limits_{k_1+k_2=r } \frac{s+r + 2(k_1 - k_2)}{F_{r,s} + 2B(k_1) - 2A(k_2)}\\
	&=&\min\limits_{0\le k\le r}\frac{s-r + 4k}
	{\sum_{j=1}^{k}(\sigma_j+\widetilde{\sigma}_j)^2+\sum_{j=1}^{r-k}(\sigma_{r+1-j}-\widetilde{\sigma}_{s-r+k+j})^2+\sum_{j=k+1}^{s-r+k}\widetilde{\sigma}_j^2},
\end{eqnarray*}
which can also not be reduced to some specific $k$.

Let $k_\star$ be the index which makes $f$ attain its minimum. Herein $x_{jj}=-1, 1\le j\le k_\star$, $x_{s-r+k_\star+j,r+1-j}=1, 1\le j\le r-k_\star $ and other $x_{ij}=0$, i.e.,
\begin{eqnarray*}
	X_\star=\begin{pmatrix}
		-I_{k_\star}&0\\
		0&0\\
		0&S_{r-k_\star}
	\end{pmatrix}\in \mathbb{R}^{s\times r},
\end{eqnarray*}
where $I_{k_\star}$ is a $k_\star\times k_\star$ identity matrix and $S_{k_\star}$ is a $(r-k_\star)\times (r- k_\star)$ reversal matrix.

\end{proof}

\section{Proofs of Main results}\label{s3}
Let $A\in \mathbb{C}_r^{m\times n}, \widetilde{A}=A+E\in \mathbb{C}_s^{m\times n}$ (with loss of generality, we can assume $m\ge n\ge r$) with the singular value decompositions
\begin{eqnarray*}
	A=U\Sigma V^*, \widetilde{A}=\widetilde{U}\widetilde{\Sigma}\widetilde{V}^*,
\end{eqnarray*}
where $U=(U_1, U_2)\in \mathbb{C}^{m\times m}$ and $V=(V_1, V_2)\in \mathbb{C}^{n\times n}$ are unitary, $U_1\in  \mathbb{C}_r^{m\times r}, V_1\in  \mathbb{C}_r^{n\times r}$, $\widetilde{U}=(\widetilde{U}_1, \widetilde{U}_2)\in \mathbb{C}^{m\times m}$ and $\widetilde{V}=(\widetilde{V}_1, \widetilde{V}_2)\in \mathbb{C}^{n\times n}$ are unitary, $\widetilde{U}_1\in  \mathbb{C}_s^{m\times s}, \widetilde{V}_1\in  \mathbb{C}_s^{n\times s}$,
\[
\Sigma=\begin{pmatrix}
	\Sigma_1&0\\
	0&0
\end{pmatrix}\in \mathbb{C}_r^{m\times n} \text{ and } \widetilde{\Sigma}=\begin{pmatrix}
	\widetilde{\Sigma}_1&0\\
	0&0
\end{pmatrix}\in \mathbb{C}_s^{m\times n},
\]
$\Sigma_1=\diag(\sigma_1,\ldots,\sigma_r), \widetilde{\Sigma}_1=\diag(\widetilde{\sigma}_1,\ldots,\widetilde{\sigma}_s)$ , $\sigma_1\ge \ldots\ge \sigma_r>0$ and $\widetilde{\sigma}_1\ge \ldots\ge \widetilde{\sigma}_s>0$. 

Denote $I^{(r)}=\begin{pmatrix}
	I_r&0\\
	0&0
\end{pmatrix}\in \mathbb{C}_r^{m\times n}$, where $I_r$ is an identity matrix of order $r$. By a simple calculation, if $A\in \mathbb{C}_r^{m\times n}, \widetilde{A}=A+E\in \mathbb{C}_s^{m\times n}$ have the generalized polar decompositions (\ref{gpc}), then $$Q=U_1V_1=UI^{(r)}V, \widetilde{Q}=\widetilde{U}_1\widetilde{V}_1=\widetilde{U}I^{(s)}\widetilde{V}.$$ Denote $S=\widetilde{U}^*U=(s_{ij})\in \mathbb{C}^{m\times m}, T=\widetilde{V}^*V=(t_{ij})\in \mathbb{C}^{n\times n}$.  Then $S, T$ are unitary.

\medskip

\noindent\textbf{Proof of Theorem \ref{thm1.3} and \ref{thm1.7}.} First, notice that
\begin{eqnarray*}
	\|Q-\widetilde{Q}\|_F&=&\|UI^{(r)}V-\widetilde{U}I^{(s)}\widetilde{V}\|_F=\|\widetilde{U}^*UI^{(r)}-I^{(s)}\widetilde{V}^*V\|_F,\\
	\|E\|_F&=&\|U\Sigma V^*-\widetilde{U}\widetilde{\Sigma} \widetilde{V}^*\|_F=\|\widetilde{U}^*U\Sigma-\widetilde{\Sigma} \widetilde{V}^*V\|_F.
\end{eqnarray*}
 Thus,
\begin{eqnarray*}
	\|Q-\widetilde{Q}\|_F^2&=&\|SI^{(r)}-I^{(s)}T\|_F^2\nonumber\\
	&=&\|SI^{(r)}\|_F^2+\|I^{(s)}T\|_F^2-2\Re\,\tr \left(  SI^{(r)}T^*I^{(s)*}\right) \nonumber\\
	&=&r+s-2\sum_{i=1}^s\sum_{j=1}^r\Re \left( s_{ij} \overline{t_{ij}}\right) ;\label{Q}\\
	\|E\|_F^2&=&\|S\Sigma-\widetilde{\Sigma} T\|_F^2\nonumber\\
	&=&\|S\Sigma\|_F^2+\|\widetilde{\Sigma} T\|_F^2-2\Re\,\tr \left(  S\Sigma T^*\widetilde{\Sigma}^*\right) \nonumber\\
		&=&\sum_{j=1}^r \sigma_j^2+\sum_{j=1}^s\widetilde{\sigma}_j^2 -2\sum_{i=1}^s\sum_{j=1}^r\widetilde{\sigma}_i\sigma_j\Re\left( s_{ij} \overline{t_{ij}}\right) \label{E}.
\end{eqnarray*}
Let $x_{ij}=\Re \left( s_{ij} \overline{t_{ij}}\right)$. For each $1\le i\le s$, by Cauchy Schwarz inequality, we have
\begin{eqnarray*}
	\sum_{i=1}^s\left|x_{ij}\right|&\le & \sum_{i=1}^s\left| s_{ij}\right|\left|t_{ij}\right|\\
	&\le &\left( \sum_{i=1}^s\left| s_{ij}\right|^2\right)^\frac{1}{2} \left( \sum_{j=1}^s \left|t_{ij}\right|^2\right)^\frac{1}{2}\\
	&\le& 1.
\end{eqnarray*}
Similarly, for each $1\le j\le r$, $\sum_{j=1}^r\left|x_{ij}\right|\le 1$. Notice
\begin{eqnarray*}
	\dfrac{\|Q-\widetilde{Q}\|_F^2}{\|E\|_F^2}&=&	
\frac{r+s-2\displaystyle\sum_{i=1}^s\sum_{j=1}^r x_{ij}}
{\,\sum_{j=1}^r\sigma_j^2+\sum_{j=1}^s\widetilde{\sigma}_j^2-2\displaystyle\sum_{i=1}^s\sum_{j=1}^r\tilde\sigma_i\,\sigma_j\,x_{ij}\,}.
\end{eqnarray*}
 By Lemma \ref{keylem}, we have 
\begin{eqnarray*}
\|Q-\widetilde{Q}\|_F&\le &\sqrt{\max\limits_{0\le k\le r}\frac{s-r + 4k}
{\sum_{j=1}^{r-k}(\sigma_j-\widetilde{\sigma}_j)^2+\sum_{j=1}^{k}(\sigma_{r+1-j}+\widetilde{\sigma}_{s-k+j})^2+\sum_{j=r-k+1}^{s-k}\widetilde{\sigma}_j^2}}\|E\|_F,\\
\|Q-\widetilde{Q}\|_F&\ge&\sqrt{\min\limits_{0\le k\le r}\frac{s-r + 4k}
	{\sum_{j=1}^{k}(\sigma_j+\widetilde{\sigma}_j)^2+\sum_{j=1}^{r-k}(\sigma_{r+1-j}-\widetilde{\sigma}_{s-r+k+j})^2+\sum_{j=k+1}^{s-r+k}\widetilde{\sigma}_j^2}}\|E\|_F.
\end{eqnarray*}
Let $k^\star, k_\star$  be the indices at which $f$ attains its maximum and minimum, respectively. To make the two equalities hold,  we can take
\begin{eqnarray*}
	S^\star&=&\begin{pmatrix}
		S_{11}^\star&\star\\
		0&\star
	\end{pmatrix}, \text{ where } S_{11}^\star=\begin{pmatrix}
	I_{r-k^\star}&0\\
	0&0\\
	0&S_{k^\star}
	\end{pmatrix}\in \mathbb{C}^{s\times r}, \\
T^\star&=&\begin{pmatrix}
		T_{11}^\star
	&\star\\
	0&\star
\end{pmatrix}, \text{ where } T_{11}^\star=\begin{pmatrix}
	I_{r-k^\star}&0\\
	0&0\\
	0&-S_{k^\star}
\end{pmatrix}\in \mathbb{C}^{s\times r};\\
	S_\star&=&\begin{pmatrix}
	
	S_{11\star}&\star\\
	0&\star
\end{pmatrix}, \text{ where } S_{11\star}=\begin{pmatrix}
	I_{k_\star}&0\\
	0&0\\
	0&S_{r-k_\star}
\end{pmatrix}\in \mathbb{C}^{s\times r}, \\
T_\star&=&\begin{pmatrix}
	T_{11\star}
	&\star\\
	0&\star
\end{pmatrix}, \text{ where } T_{11\star}=\begin{pmatrix}
	-I_{k_\star}&0\\
	0&0\\
	0&S_{r-k_\star}
\end{pmatrix}\in \mathbb{C}^{s\times r},
\end{eqnarray*}
respectively. \qed

\medskip
\noindent\textbf{A new proof of Theorem \ref{thm1.2}.} If either \( A \) or \( B \) is zero, then (\ref{akbound}) holds trivially. 
Now suppose \( A \) and \( B \) are non-zero matrices. The angle $\theta$ between non-zero matrices $A, B \in \mathbb{C}^{m\times n}$ can be defined by
$$ \cos\theta = \frac{\Re\, \tr\, B^*A}{\|A\|_F \|B\|_F}, \quad 0 \leq \theta \leq \pi. $$
Notice that
\begin{eqnarray*}
	\|A - B\|_F^2&=& \|A\|_F^2 + \|B\|_F^2 - 2\|A\|_F\|B\|_F \cos\alpha,\\
	\||A| -|B|\|_F^2 &=& \|A\|_F^2 + \|B\|_F^2 - 2\|A\|_F\|B\|_F \cos\beta,
\end{eqnarray*}
where \( \alpha \) is the angle between \( A \) and \( B \), and \( \beta \) is the angle between \( |A| \) and \( |B| \). 
From \cite[Lemma 3]{LZ22}, we have $\cos^2\alpha\le \cos\beta$. 

Now, denote \( r = \frac{\|A\|_F}{\|B\|_F} \). 
Consider the ratio
\begin{eqnarray*}
	\frac{\||A| - |B|\|_F^2}{\|A - B\|_F^2} 
	&=& \frac{\|A\|_F^2 + \|B\|_F^2 - 2\|A\|_F\|B\|_F\cos\beta}{\|A\|_F^2 + \|B\|_F^2 + 2\|A\|_F\|B\|_F\cos\alpha}\\
	&=& \frac{r^2 + 1 - 2r\cos\beta}{r^2 +1 - 2r\cos\alpha}\\
	&\le&\frac{r^2 + 1 - 2r\cos^2\alpha}{r^2 + 1 -2r\cos\alpha}\\
	&=&\dfrac{1}{2r}\cdot\left[-\left(u+\frac{(r-1)^2(r^2+1)}{u} \right)+2(r^2+1) \right] \\
	&&\text{ (where $u:=r^2 + 1 -2r\cos\alpha\in [(r-1)^2,(r+1)^2]$)}\\
	&\le&\dfrac{1}{2r}\cdot\left[2(r^2+1)-2\sqrt{(r-1)^2(r^2+1)} \right] \\
	&&\text{ (take $u =\sqrt{(r-1)^2(r^2+1)}\in [(r-1)^2,(r+1)^2] $)}\\
	&=& t-\sqrt{(t-2)t}\text{ (where $t:=r+\frac{1}{r}\ge 2$)}\\
	&\le& 2.
\end{eqnarray*}
That is, 
\[
\left\||A| - |B|\right\|_F\le \sqrt{2}\left\|A-B\right\|_F.
\]\qed

\medskip

For the simplification of subsequent proofs, we define
\begin{align*}\Sigma_{sq}&=\begin{pmatrix}
		\Sigma_r&0\\
		0&0
	\end{pmatrix}\in \mathbb{C}_r^{n\times n},\qquad \widetilde{\Sigma}_{sq}=\begin{pmatrix}
		\widetilde{\Sigma}_s&0\\
		0&0
	\end{pmatrix}\in \mathbb{C}_r^{n\times n};\\
	F_{r,s}&=\sum_{j=1}^r\sigma_j^2+\sum_{j=1}^s\widetilde\sigma_j^2,	\quad\qquad G_{r,r}=\sum_{j=1}^r\sigma_j\widetilde\sigma_j;\\
	M&=\sum_{i=1}^s\sum_{j=1}^r\widetilde{\sigma}_i\sigma_j\Re\left(  s_{ij}\overline{t_{ij}}\right), \quad N=\sum_{i=1}^s\sum_{j=1}^r\widetilde{\sigma}_i\sigma_j\left|t_{ij}\right|^2.
\end{align*}
Clearly, we have
\begin{eqnarray*}
	G_{r,r}&\le& \frac{1}{2}F_{r,s} \text{ (By Cauchy-Schwarz inequality)};\\
	N &\in & [0, G_{r,r}] \text{ (Since $N$ is quasi-convex with respect to $\left|t_{ij}\right|^2$,} \\
	&&\qquad \text{ then by Lemma \ref{lemmax}, \ref{lemext} and the rearrangement inequality)};\\
	\left|M\right|&=&\left|\sum_{i=1}^s\sum_{j=1}^r\widetilde{\sigma}_i\sigma_j\Re\left(  s_{ij}\overline{t_{ij}}\right)\right|\\
	&\le&\sum_{i=1}^s\sum_{j=1}^r\widetilde{\sigma}_i\sigma_j \left| s_{ij}\right| \left| t_{ij}\right| \\
	&\le&\left( \sum_{i=1}^s\sum_{j=1}^r\widetilde{\sigma}_i\sigma_j\l \left| s_{ij}\right|^2\right)^\frac{1}{2}\left( \sum_{i=1}^s\sum_{j=1}^r\widetilde{\sigma}_i\sigma_j\l \left| t_{ij}\right|^2\right)^\frac{1}{2}   \\
	&&\text{ (By Cauchy-Schwarz inequality)}\\
	&\le& G_{r,r}^\frac{1}{2}N^\frac{1}{2}.
\end{eqnarray*}

\noindent\textbf{Proof of Theorem \ref{thm1.4} and \ref{thm1.8}.}	 
Compute
	\begin{eqnarray*}
		\|H-\widetilde{H}\|_F^2&=&\|V\Sigma_{sq} V^*-\widetilde{V}\widetilde{\Sigma}_{sq} \widetilde{V}^*\|_F^2\nonumber\\
		&=&\|\widetilde{V}^*V\Sigma_{sq} -\widetilde{\Sigma}_{sq} \widetilde{V}^*V\|_F^2\nonumber\\
		&=&\|T\Sigma_{sq} -\widetilde{\Sigma}_{sq} T\|_F^2\label{H-}\\
		&=&\|T\Sigma_{sq}\|_F^2+\|\widetilde{\Sigma}_{sq} T\|_F^2-2\Re\, \tr\, T\Sigma_{sq} T^*\widetilde{\Sigma}_{sq}\nonumber\\
		&=&\sum_{j=1}^r\sigma_j^2+\sum_{j=1}^s\widetilde{\sigma}_j^2-2\sum_{i=1}^s\sum_{j=1}^r\widetilde{\sigma}_i\sigma_j\left|t_{ij}\right|^2\nonumber\\
		&=& F_{r,s}-2N,\nonumber\\
		\|E\|_F^2&=&F_{r,s}-2M.\nonumber
			\end{eqnarray*}
			Thus,
			\begin{eqnarray*}
				\dfrac{\|H-\widetilde{H}\|_F^2}{\|E\|_F^2}&=&\dfrac{F_{r,s}-2N}{F_{r,s}-2M}\\
				&\le&\dfrac{F_{r,s}-2N}{F_{r,s}-2G_{r,r}^\frac{1}{2}N^\frac{1}{2}}\\
				&=& \frac{1}{2G}\cdot\left(-\left(u+\dfrac{F_{r,s}^2-2G_{r,r}F_{r,s}}{u}\right)+2F_{r,s}   \right)\\
				 &&\text{ (where $u:=F_{r,s}-2G_{r,r}^\frac{1}{2}N^\frac{1}{2}\in [F_{r,s}-2G_{r,r}, F_{r,s}]$)}\\
				 &\le&\frac{1}{G_{r,r}}\cdot\left(F_{r,s}-\sqrt{F_{r,s}^2-2G_{r,r}F_{r,s}} \right).\\
				 &&\text{ (take $u=\sqrt{F_{r,s}^2-2G_{r,r}F_{r,s}}\in [F_{r,s}-2G_{r,r}, F_{r,s}]$)}\\ 
			\end{eqnarray*}
		We claim that	the equality can be attained by taking
			\begin{eqnarray*}
				S&=&\begin{pmatrix}
					I_r&0\\
					0&\star
				\end{pmatrix},\quad T=\begin{pmatrix}
				\dfrac{F_{r,s}}{F_{r,s}+\sqrt{F_{r,s}^2-2G_{r,r}F_{r,s}}}I_r&\star\\
				0_{(s-r)\times r}&\star\\
				\star&\star
				\end{pmatrix}.
			\end{eqnarray*} To  verify this, compute $M=\tfrac{F_{r,s}G_{r,r}}{\left( F_{r,s}+\sqrt{F_{r,s}^2-2G_{r,r}F_{r,s}}\right) }, N=\tfrac{F^2_{r,s}G_{r,r}}{\left( F_{r,s}+\sqrt{F_{r,s}^2-2G_{r,r}F_{r,s}}\right)^2}$, then
			\begin{eqnarray*}
				\dfrac{\|H-\widetilde{H}\|_F^2}{\|E\|_F^2}&=&\dfrac{F_{r,s}-2N}{F_{r,s}-2M}\\
				&=&\frac{F_{r,s}-\tfrac{2F^2_{r,s}G_{r,r}}{\left( F_{r,s}+\sqrt{F_{r,s}^2-2G_{r,r}F_{r,s}}\right)^2}}{F_{r,s}-\tfrac{2F_{r,s}G_{r,r}}{\left( F_{r,s}+\sqrt{F_{r,s}^2-2G_{r,r}F_{r,s}}\right) }}\\
				&=&\frac{F_{r,s}-\tfrac{\left( F_{r,s}-\sqrt{F_{r,s}^2-2G_{r,r}F_{r,s}}\right)^2}{2G_{r,r}}}{F_{r,s}-\left(F_{r,s}-\sqrt{F_{r,s}^2-2G_{r,r}F_{r,s}} \right) }\\
				&=&\frac{2F_{r,s}\sqrt{F_{r,s}^2-2G_{r,r}F_{r,s}}-2(F_{r,s}^2-2G_{r,r}F_{r,s})}{2G_{r,r}\sqrt{F_{r,s}^2-2G_{r,r}F_{r,s}} }\\
				&=&\frac{F_{r,s}-\sqrt{F_{r,s}^2-2G_{r,r}F_{r,s}}}{G_{r,r}}.
			\end{eqnarray*}
			
			To find a lower bound, it only needs to note that 
			\begin{eqnarray*}
				\dfrac{\|H-\widetilde{H}\|_F^2}{\|E\|_F^2}&=&\dfrac{F_{r,s}-2N}{F_{r,s}-2M}\\
				&\ge&\dfrac{F_{r,s}-2N}{F_{r,s}+2G_{r,r}^\frac{1}{2}N^\frac{1}{2}}\\
				&\ge&\dfrac{F_{r,s}-2G_{r,r}}{F_{r,s}+2G_{r,r}}.
			\end{eqnarray*}
			The equality can be attained by taking
			\begin{eqnarray*}
				S&=&\begin{pmatrix}
					-I_r&0\\
					0&\star
				\end{pmatrix},\quad T=\begin{pmatrix}
					I_r&0\\
					0&\star
				\end{pmatrix}.
			\end{eqnarray*}
			
			Thus, we completes the proofs.
			\qed

\medskip

\noindent\textbf{Proof of Theorem \ref{slee} and \ref{cslee}.} Consider the ratio
\begin{eqnarray*} 
		\dfrac{\|A+\widetilde{A}\|_F^2}{\|H+\widetilde{H}\|_F^2}&=&\dfrac{F_{r,s}+2M}{F_{r,s}+2N}\\
		&\le&\dfrac{F_{r,s}+2G_{r,r}^\frac{1}{2}N^\frac{1}{2}}{F_{r,s}+2N}\\
		&=& \frac{2G_{r,r}}{\left(u+\frac{F_{r,s}^2+2G_{r,r}F_{r,s}}{u} \right) -2F_{r,s}}\\
		&&\text{ (where $u:=F_{r,s}+2G_{r,r}^\frac{1}{2}N^\frac{1}{2}\in [F_{r,s}, F_{r,s}+2G_{r,r}]$)}\\
		&\le&\frac{G_{r,r}}{\sqrt{F_{r,s}^2+2G_{r,r}F_{r,s}} -F_{r,s}},\\
		&&\text{ (take $u=\sqrt{F_{r,s}^2+2G_{r,r}F_{r,s}}\in [F_{r,s}, F_{r,s}+2G_{r,r}]$)} 
	\end{eqnarray*}
	where the equality can be attained by taking
	\[
		S=\begin{pmatrix}
			-I_r&0\\
			0&\star
		\end{pmatrix},\quad  T=\begin{pmatrix}
		\dfrac{F_{r,s}}{F_{r,s}+\sqrt{F_{r,s}^2+2G_{r,r}F_{r,s}}}I_r&\star\\
		0_{(s-r)\times r}&\star\\
		\star&\star
		\end{pmatrix}.
	\]
	
	Moreover, we have
	\begin{eqnarray*}
		\dfrac{\|A+\widetilde{A}\|_F^2}{\|H+\widetilde{H}\|_F^2}&=&\dfrac{F_{r,s}+2M}{F_{r,s}+2N}\\
		&\ge&\dfrac{F_{r,s}-2G_{r,r}^\frac{1}{2}N^\frac{1}{2}}{F_{r,s}+2N}\\
		&\ge&\dfrac{F_{r,s}-2G_{r,r}}{F_{r,s}+2G_{r,r}},
	\end{eqnarray*}
	where the equality can be attained by taking
		\begin{eqnarray*}
		S&=&\begin{pmatrix}
			-I_r&0\\
			0&\star
		\end{pmatrix},\quad T=\begin{pmatrix}
			I_r&0\\
			0&\star
		\end{pmatrix}.
	\end{eqnarray*}\qed
	
\medskip

To facilitate the subsequent proofs,	let $A, B$ have the singular value decompositions
\[
A=U\Sigma V^*, B=\widehat{U}\widehat{\Sigma} \widehat{V}^*,
\]
where $U,\widehat{U}\in \mathbb{C}^{m\times m}, V, \widehat{V}\in \mathbb{C}^{n\times n}$ are unitary and 
\[
\Sigma=\begin{pmatrix}
	\Sigma_1& 0\\
	0&0
\end{pmatrix}\in \mathbb{C}^{m\times n}_r, \quad \widehat{\Sigma}=\begin{pmatrix}
	\widehat{\Sigma}_1& 0\\
	0&0
\end{pmatrix}\in \mathbb{C}^{m\times n}_s
\]
with $\Sigma_1=\diag(\sigma_1,\ldots,\sigma_r)$ and $\widehat{\Sigma}_1=\diag(\widehat{\sigma}_1,\ldots,\widehat{\sigma}_s)$. 

We still denote $\widehat{S}=\widehat{U}^*U=(s_{ij}), \widehat{T}=\widehat{V}^*V=(t_{ij})$.

\medskip

\noindent\textbf{Proof of Theorem \ref{sag}.}
 Compute
	\begin{eqnarray*}
		\|AB^*\|_F^2&=&\|U\Sigma V^*\widehat{V}\widehat{\Sigma}^*\widehat{U}^*\|_F^2\\
		&=&\|\widehat{\Sigma}\widehat{T}\Sigma^*\|_F^2\\
		&=&\sum_{i=1}^s\sum_{j=1}^r\widehat{\sigma}_i^2\sigma_j^2\left|t_{ij}\right|^2 \\
		&\le& \sum_{j=1}^r\widehat{\sigma}_j^2\sigma_j^2,\\
		\|\left|A\right|^2+\left|B\right|^2\|_F^2&=&  \|V\Sigma_{sq}^2V^*+\widehat{V}\widehat{\Sigma}_{sq}^2\widehat{V}^*\|_F^2\\
		&=&\|\widehat{T}\Sigma_{sq}^2+\widehat{\Sigma}_{sq}^2\widehat{T}\|_F^2\\
		&=&\sum_{j=1}^r\sigma_j^4+\sum_{j=1}^s\widehat{\sigma}_j^4+2\sum_{i=1}^s\sum_{j=1}^r\widehat{\sigma}_i^2\sigma_j^2\left|t_{ij}\right|^2.\nonumber
	\end{eqnarray*}
	Thus, \begin{eqnarray*}
		\dfrac{	\|AB^*\|_F^2}{	\|\left|A\right|^2+\left|B\right|^2\|_F^2}&=& \dfrac{\sum_{i=1}^s\sum_{j=1}^r\widehat{\sigma}_i^2\sigma_j^2\left|t_{ij}\right|^2}{\sum_{j=1}^r\sigma_j^4+\sum_{j=1}^s\widehat{\sigma}_j^4+2\sum_{i=1}^s\sum_{j=1}^r\widehat{\sigma}_i^2\sigma_j^2\left|t_{ij}\right|^2}\\
		&\le&  \frac{\sum_{j=1}^r\sigma_j^2\widehat{\sigma}_j^2}{\sum_{j=1}^r\sigma_j^4+\sum_{j=1}^s\widehat{\sigma}_j^4+2\sum_{j=1}^r\sigma_j^2\widehat{\sigma}_j^2}.
	\end{eqnarray*}
	Equivalently,
 \[
 \|AB^*\|_F\le \left(\frac{\sum_{j=1}^r\widehat{\sigma}_j^2\sigma_j^2}{\sum_{j=1}^r\sigma_j^4+\sum_{j=1}^s\widehat{\sigma}_j^4+2\sum_{j=1}^r\widehat{\sigma}_j^2\sigma_j^2} \right)^\frac{1}{2}\|\left|A\right|^2+\left|B\right|^2\|_F. 
 \]\qed

\medskip

\noindent\textbf{Proof of Theorem \ref{scs}.}
Compute
	\begin{eqnarray*}
			\|A\|_F\|B\|_F&=&\left(\sum_{j=1}^r\sigma_{j}^2 \right)^\frac{1}{2}\left(\sum_{j=1}^s\widehat{\sigma}_j^2 \right)^\frac{1}{2},\\
			\left|\tr\, B^*A\right|&=&\left|\tr\, \widehat{V}\widehat{\Sigma}^*\widehat{U}^*U\Sigma V^*\right|\\
			&=&\left|\tr\,\widehat{\Sigma}^*S\Sigma T^*\right|\\
			&=&\left|\sum_{i=1}^s\sum_{j=1}^r\widehat{\sigma_i}\sigma_j\Re(s_{ij}\overline{t_{ij}})\right|\\
			&=&\left|M\right|\le\sum_{j=1}^r\sigma_j\widehat{\sigma}_j.
	\end{eqnarray*}
Thus, \[
\dfrac{\left|\tr\, B^*A\right|}{\|A\|_F\|B\|_F}\le \frac{\sum_{j=1}^r\sigma_j\widehat{\sigma}_j}{\left(\sum_{j=1}^r\sigma_{j}^2 \right)^\frac{1}{2}\left(\sum_{j=1}^s\widehat{\sigma}_j^2 \right)^\frac{1}{2}}.
\]\qed

\medskip

\noindent\textbf{Proof of Corollary \ref{kitcor} and Theorem \ref{thmlast}, \ref{thmlast1}.} Let $A,B$ have the spectral decompositions
\[
A=U\Lambda U^*, \quad B=\widehat{U}\widehat{\Lambda} \widehat{U}^*,
\]
where $\Lambda=\diag(\lambda_1,\ldots,\lambda_r,0,\ldots,0)$ and $\widehat{\Lambda}=\diag(\widehat{\lambda}_1,\ldots,\widehat{\lambda}_s,0,\ldots,0)$.
Then 
\[
\left|A\right|=U\left|\Lambda\right| U^*, \quad B=\widehat{U}|\widehat{\Lambda}| \widehat{U}^*.
\]
Compute the ratio
\begin{eqnarray*}
	\dfrac{\|\left|A\right|-\left|B\right|\|_F^2}{\|A-B\|_F^2}&=&\dfrac{\|U\left|\Lambda\right| U^*-\widehat{U}|\widehat{\Lambda}| \widehat{U}^*\|_F^2}{\|U\Lambda U^*-\widehat{U}\widehat{\Lambda} \widehat{U}^*\|_F^2}\\
	&=&\dfrac{\|\widehat{U}^*U\left|\Lambda\right| -|\widehat{\Lambda}| \widehat{U}^*U\|_F^2}{\|\widehat{U}^*U\Lambda- \widehat{\Lambda} \widehat{U}^*U\|_F^2}\\
	&=&\dfrac{\|\widehat{S}\left|\Lambda\right| -|\widehat{\Lambda}| \widehat{S}\|_F^2}{\|\widehat{S}\Lambda- \widehat{\Lambda} \widehat{S}\|_F^2}\\
	&=&\dfrac{\sum_{j=1}^r\left|\lambda_j\right|^2+\sum_{j=1}^s|\widehat{\lambda}_j|^2-2\sum_{i=1}^s\sum_{j=1}^r|\widehat{\lambda}_i||\lambda_j|\left|s_{ij}\right|^2}{\sum_{j=1}^r\left|\lambda_j\right|^2+\sum_{j=1}^s|\widehat{\lambda}_j|^2-2\sum_{i=1}^s\sum_{j=1}^r\Re(\widehat{\lambda}_i\overline{{\lambda}_j})\left|s_{ij}\right|^2}.
\end{eqnarray*}
Clearly, $	\tfrac{\|\left|A\right|-\left|B\right|\|_F^2}{\|A-B\|_F^2}\le 1$, we can take $s_{ij}=0$ for all $1\le i\le s,1\le j\le r$ such that the equality holds.  If $s=m=n$,  then $\sum_{i=1}^s\left|s_{ij}\right|^2=1, 1\le j\le r$, not all $s_{ij}=0$ for each $j$, the ratio attains its maximum on the set $\{ \sum_{j=1}^r E_{\sigma(j) j}, \sigma\in S_r([n])\}.$
\[
\max\dfrac{\|\left|A\right|-\left|B\right|\|_F^2}{\|A-B\|_F^2}=\max\limits_{\sigma\in S_r([n])} \dfrac{\sum_{j=1}^r\left|\lambda_j\right|^2+\sum_{j=1}^n|\widehat{\lambda}_j|^2-2\sum_{j=1}^r|\widehat{\lambda}_{\sigma(j)}||\lambda_{j}|}{\sum_{j=1}^r\left|\lambda_j\right|^2+\sum_{j=1}^n|\widehat{\lambda}_j|^2-2\sum_{j=1}^r\Re(\widehat{\lambda}_{\sigma(j)}\overline{{\lambda}_{j}})}.
\]
That is,
\[
\|\left|A\right|-\left|B\right|\|_F\le \sqrt{\max\limits_{\sigma\in S_r([n])} \dfrac{\sum_{j=1}^r\left|\lambda_j\right|^2+\sum_{j=1}^n|\widehat{\lambda}_j|^2-2\sum_{j=1}^r|\widehat{\lambda}_{\sigma(j)}||\lambda_{j}|}{\sum_{j=1}^r\left|\lambda_j\right|^2+\sum_{j=1}^n|\widehat{\lambda}_j|^2-2\sum_{j=1}^r\Re(\widehat{\lambda}_{\sigma(j)}\overline{{\lambda}_{j}})}}\|A-B\|_F.
\]
 Further, by Lemma \ref{lemlf}, this ratio is also quasi-concave with respect to $\left|s_{ij}\right|^2$. Using Lemmas \ref{lemmax} and \ref{lemext}, we conclude that the ratio attains its minimum on the set
\[
\left\{ \sum_{t=1}^k E_{i_t j_t} \mid 1 \leq i_1  < \cdots < i_k \leq s, \ 1 \leq j_1  < \cdots < j_k \leq r, \ 1 \leq k \leq r \right\}.
\]
(In fact, Substituting 0 into the expression yields 1.) Thus,
\[
 \|\left|A\right|-\left|B\right|\|_F\le \min\limits_{1\le k\le r, (i_1\cdots i_k)\in S_k[s],(j_1\cdots j_k)\in S_k[r] }\sqrt{\dfrac{\widehat{F}_{r,s}-2\sum_{t=1}^k|\widehat{\lambda}_{i_t}||\lambda_{j_t}|}{\widehat{F}_{r,s}-2\sum_{t=1}^k\Re\left( \widehat{\lambda}_{i_t}\overline{\lambda}_{j_t}\right)}}\|A-B\|_F.
\]\qed

\bigskip
\bibliographystyle{elsarticle-num}

\end{document}